\definecolor{medium-blue}{rgb}{0,0,.8}
  \newcommand{\tikzmath}[2][]
     {\vcenter{\hbox{\begin{tikzpicture}[#1]#2
                     \end{tikzpicture}}}
     }
\newcommand{\nc}[2]{\newcommand{#1}{#2}}
\nc{\B}{\mathrm{B}}
\nc{\C}{\mathbb{C}}
\nc{\R}{\mathbb{R}}
\nc{\Q}{\mathbb{Q}}
\nc{\Z}{\mathbb{Z}}
\nc{\N}{\mathbb{N}}
\nc{\cA}{\mathcal{A}}
\nc{\cB}{\mathcal{B}}
\nc{\cC}{\mathcal{C}}
\nc{\cD}{\mathcal{D}}
\nc{\cI}{\mathcal{I}}
\nc{\cM}{\mathcal{M}}
\nc{\g}{\mathfrak{g}}
\def\tworarrow{\hspace{.1cm}{\setlength{\unitlength}{.50mm}\linethickness{.09mm}
                        \begin{picture}(8,8)(0,0)\qbezier(0,4)(4,7)(8,4)\qbezier(0,1)(4,-2)(8,1)\qbezier(3.5,4)(3.5,3)(3.5,1.5)
                                                 \qbezier(4.5,4)(4.5,3)(4.5,1.5)\qbezier(4,0.8)(4.5,1.7)(5.5,2)\qbezier(4,0.8)(3.5,1.7)(2.5,2)
                                                 \qbezier(8,1)(7.4,.2)(7.7,-.7)\qbezier(8,1)(7,1)(6.5,1.5)\qbezier(8,4)(7.4,4.8)(7.7,5.7)
                                                 \qbezier(8,4)(7,4)(6.5,3.5)
                        \end{picture}\hspace{.1cm}}}
\def\CS{\mathrm{CS}}
\def\Rep{\mathrm{Rep}}
\def\Bim{\mathrm{Bim}}
\def\id{\mathrm{id}}
\def\op{\mathrm{op}}
\def\Hom{\mathrm{Hom}}
\begin{document}
\title{Bicommutant categories from conformal nets}
\author{Andr\'e Henriques}
\date{}
\begin{singlespace}
\maketitle
\end{singlespace}

\newtheorem{theorem}{Theorem}[section]
\newtheorem*{theorem*}{Theorem}
\newtheorem{maintheorem}{Theorem}
\renewcommand*{\themaintheorem}{\Alph{maintheorem}}
\newtheorem*{maintheorem*}{Main Theorem}
\newtheorem{lemma}[theorem]{Lemma}
\newtheorem*{tech-lemma}{Technical lemma}
\newtheorem{proposition}[theorem]{Proposition}
\newtheorem{corollary}[theorem]{Corollary}
\newtheorem{definition}[theorem]{Definition}
\newtheorem*{definition*}{Definition}
\newtheorem{observation}[theorem]{Observation}
\newtheorem{exercise}[theorem]{Exercise}
\newtheorem{conjecture}[theorem]{Conjecture}
\newtheorem*{conjecture*}{Conjecture}
\newtheorem{question}[theorem]{Question}
\newtheorem*{question*}{Question}

\newenvironment{maindefinition}[1][Main Definition.]{\begin{trivlist}
\item[\hskip \labelsep {\bfseries #1}]}{\end{trivlist}}

\theoremstyle{remark}
\newtheorem{remark}[theorem]{Remark}
\newtheorem*{remark*}{Remark}
\newtheorem{example}[theorem]{Example}
\newtheorem*{example*}{Example}
\newtheorem*{note}{Note}

\abstract{
We prove that the category of solitons of a finite index conformal net is a bicommutant category, and that its Drinfel'd center is the category of representations of the conformal net.
In the special case of a chiral WZW conformal net with finite index, the second result specializes to the statement that the Drinfel'd center of the category of
representations of the based loop group is equivalent to the category of representations of the free loop group.
These results were announced in \cite{CS(pt)}.\bigskip
}

\tableofcontents

\newpage
\section{Introduction and statement of results}
In %our recent preprint 
\cite{CS(pt)}, we made the announcement that, at least for $G=SU(n)$, the Drinfel'd center of the category of locally normal\footnote{Local normality is a technical condition which might be equivalent to the positivity of the energy \cite[Conj.\,22 \& 34]{colimits}.}
representations of the based loop group
is equivalent, as a braided tensor category, to the category of locally normal
representations of the free loop group:
\begin{equation}\label{eq: Z(Rep^k(Omega G)) == Rep^k(LG)}
Z\big(\Rep^k(\Omega G)\big) \,\cong\, \Rep^k(LG).
\end{equation}
One of the main goals of this paper is to establish the above relation (see Theorem~\ref{Thm1} for a precise statement).

It should be noted that the representation theory of based loop groups had not been considered before.
The mere fact that the fusion product makes sense for these representations is, in itself, remarkable.

The broader relevance of the above result comes from topological quantum field theory (TQFT), specifically from Chern-Simons theory.
%As is well known, 
There are two main classes of topological quantum field theories in dimension three:
theories of Turaev-Viro type, associated to fusion categories
\cite{MR1191386, MR1357878}, and 
theories of Reshetikhin-Turaev type, associated to modular tensor categories 
\cite{MR1091619, MR1797619}
(Chern-Simons theories are of the latter kind).
Since the groundbreaking work of Jacob Lurie on the classification of extended TQFTs \cite{MR2555928}, it has been an important question to determine which theories fit into that formalism; % of extended quantum field theory;
a theory for which that is the case is said to ``extend down to points''.
It is broadly accepted (even though this has not yet been proven) that theories of Turaev-Viro type extend down to points
\cite{arXiv:1312.7188, Wray-thesis}.
On the other hand, for a typical Reshetikhin-Turaev theory, it was generally thought that this should not be possible
(the results in \cite[\S 5.5]{MR3039775} can be interpreted as a no-go theorem --- see \cite[Rem.\,5]{CS(pt)} for a discussion).

The theory of bicommutant categories (which still needs to be developed) promises to achieve two things.
First, it shows that, contrary to general expectations, Reshetikhin-Turaev theories do seem to extend down to points (at least the ones coming from conformal nets).
Second, and more importantly, it puts Turaev-Viro theories and Reshetikhin-Turaev theories on an equal footing, by providing a unified language that applies to both of them.
The expected relations are summarised in the following diagram:
\[
\quad\,\,\,
\tikzmath{
\node(A) at (-4.7,0) {$\Bigg\{\,\parbox{2.4cm}{Unitary fusion\\\centerline{category}}\,\Bigg\}$};
\node(B) at (0,0) {$\Bigg\{\;\parbox{2.2cm}{Bicommutant \\\centerline{categories}}\;\Bigg\}$};
\node(C) at (4.7,0) {$\Bigg\{\,\parbox{1.75cm}{Conformal \\\centerline{nets}}\,\Bigg\}$,};
\node(b) at (0,-2) {$\Bigg\{\,\parbox{2.3cm}{$\,\,\;$ Extended\\\centerline{3-dim.\,TQFTs}}\,\Bigg\}$};
\draw[->] (A) --node[above, draw, circle, scale=.7, inner sep=2, yshift=4]{1} (B);
\draw[->] (C) --node[above, draw, circle, scale=.7, inner sep=2, yshift=4]{2} (B);
\draw[->] (A) --node[below, xshift=-21, yshift=-3, scale=.75]{$\parbox{2cm}{\footnotesize Turaev--Viro\\[-.04cm]\footnotesize construction\footnotemark}$} (b);
\draw[->] (C) --node[below, xshift=55, yshift=-3.5, scale=.75]{$\parbox{5.2cm}{\footnotesize Reshetikhin--Turaev construction\footnotemark\\\footnotesize applied to $\Rep_{\mathrm f}(\cA)$}$} (b);
\draw[->] (B) --node[right, draw, circle, scale=.7, inner sep=2, xshift=4]{3} (b);
}
\]
The
\addtocounter{footnote}{-1}%
\footnotetext{The Turaev-Viro construction requires the choice of a pivotal structure on the fusion category. A unitary fusion category admits a canonical pivotal structure \cite[Prop.\,8.23]{MR2183279}.}%
arrow labelled 1 was constructed in our earlier paper \cite{Bicommutant-categories-from-fusion-categories}.
The arrow labelled 2 is the content of the present paper (see Corollary~\ref{cor: main corollary} below for a precise statement).
\addtocounter{footnote}{1}%
\footnotetext{For this construction to work, one needs to assume that the conformal net $\cA$ has finite index, so that $\Rep_{\mathrm f}(\cA)$ is modular---see Remark~\ref{rem: potentially infinite direct sum}.}%
The arrow labelled 3 is still conjectural and is only expected to exist when the bicommutant category satisfies certain finiteness conditions (ensuring that it is fully dualisable).

\subsection{Motivations from Chern-Simons theory}

By the celebrated \emph{cobordism hypothesis} \cite{MR1355899, MR2555928}, a topological field theory is entirely determined by its value on a point.
The present line of research was motivated by the quest for a mathematical object that one may reasonably declare to be the value of Chern--Simons theory on a point.

Given a compact connected Lie group $G$, with classifying space $BG$, let $H^4_+(BG,\Z)$ be the subset of elements $k\in H^4(BG,\Z)$ whose image under the Chern--Weil homomorphism
\begin{equation}\label{eq: H^4(BG,Z) to Sym^2(g^*)^G}
H^4(BG,\Z)\to \mathrm{Sym}^2(\g^*)^G
\end{equation}
are positive definite metrics $\langle\cdot\,,\cdot\rangle_k$ on $\g$.
By \cite[Thm.\,6]{WZW-classification}, the map \eqref{eq: H^4(BG,Z) to Sym^2(g^*)^G} is injective and the image of $H^4_+(BG,\Z)$ under that map is, up to a scalar, the set of invariant metrics on $\g$ such that $\|X\|^2\in \Z$ for all $X$ in $\{X\in\g:\exp(X)=e\}$.

In our earlier paper \cite{WZW-classification},
given $G$ and $k\in H^4_+(BG,\Z)$ as above,
we constructed a vertex operator algebra $V_{G,k}$ and a chiral conformal net $\cA_{G,k}$,
called the chiral WZW vertex algebra and the chiral WZW conformal net, respectively.\footnote{Earlier references on these models include \cite{MR1408523}\cite{MR1822111}\cite{MR946997}\cite[\S2]{MR992362}\cite[\S6]{MR1409292}.}
A bijective correspondence was established in \cite{arXiv:1503.01260} between a certain class of unitary vertex algebras and a certain class of chiral conformal nets.
We conjecture that $V_{G,k}$ and $\cA_{G,k}$ map to each other under that correspondence, and that there is an equivalence of modular tensor categories
\[
\Rep_{\mathrm f}(V_{G,k})\,\cong\,\Rep_{\mathrm f}(\cA_{G,k}).
\]
Here, $\Rep_{\mathrm f}$ denotes the category of representations which are \emph{finite} direct sums of irreducible ones.
Assuming the above conjectures, we define $\Rep^k_{\mathrm f}(LG)$, the modular tensor category of positive energy representations of the loop group $LG$ at level $k$, to be the category
$\Rep_{\mathrm f}(V_{G,k})$, equivalently $\Rep_{\mathrm f}(\cA_{G,k})$.

Let $\CS_{G,k}$ be the Chern--Simons theory associated to the gauge group $G$ and the level $k$ \cite{MR1048699, MR990772}. 
This is a 3-dimensional topological field theory with action functional given, up to a scalar, by:\footnote{When $G$ is not simply connected,
one cannot use the formula \eqref{eq: Lagrangian of CS} to define the action.
See \cite{MR2174418, MR1048699, MR3330242} for ways to overcome this difficulty.}
\begin{equation}\label{eq: Lagrangian of CS}
\qquad S=\int\;\!\big\langle A\wedge dA\big\rangle_k+\tfrac13 \big\langle A\wedge [A\wedge A]\big\rangle_k\;\!.\!
\end{equation}
In \cite{CS(pt)},
we argued that a necessary condition for a tensor category $T$ to be the value of $\CS_{G,k}$ on a point is for its Drinfel'd center $Z(T)$ %should 
to be braided equivalent to $\Rep^k_{\mathrm f}(LG)$, or possibly $\Rep^k(LG)$
(see Section~\ref{sec: intro: Bicommutant categories} for a definition of the Drinfel'd center).
We proposed the category $\Rep^k(\Omega G)$ of \emph{locally normal representations of the based loop group} as a candidate for the value of Chern--Simons theory on a point (see \cite{MR2648901, Wray-thesis} for previous work in that direction), and offered the relation \eqref{eq: Z(Rep^k(Omega G)) == Rep^k(LG)}
as evidence for our claim.

For the remainder of this section,
let us commit to the following definitions:
\begin{equation}\label{eq: Rep^k_f = ...   Rep^k = ...   }
\Rep^k_{\mathrm f}(LG):=\Rep_{\mathrm f}(\cA_{G,k})\qquad\quad
\Rep^k(LG):=\Rep(\cA_{G,k}).
\end{equation}
%Following \cite[\S4]{CS(pt)},
Let us also define $\Rep^k(\Omega G)$
to be the category of \emph{solitons} of $\cA_{G,k}$ (see Definition~\ref{def: Solitons}, in the next section).
We call it the category of locally normal representations of the based loop group\footnote{This category is equivalent to  the version of $\Rep^k(\Omega G)$ defined in \cite[\S4]{CS(pt)} (\cite[Thm.\,31]{colimits}).}
%one $\Rep^k_{\tikzmath{\useasboundingbox (-.12,-.1) rectangle (.13,.1);\node[scale=.8]{$\scriptscriptstyle \mathrm{l.n.\!\!}$};}}(\Omega G)$, as 
%defined in \cite[\S4]{CS(pt)}.}

It is widely believed that the chiral WZW conformal nets $\cA_{G,k}$ satisfy a certain finiteness condition called \emph{finite index}, or complete rationality (see Section~\ref{sec: main results} for a definition).
This property is known to hold for $G=SU(n)$ \cite{MR1645078, MR1776984}, and in a few other cases.

\begin{theorem}\label{Thm1}
Let $\Rep^k(LG)$ be as in \eqref{eq: Rep^k_f = ...   Rep^k = ...   }.
If $\cA_{G,k}$ has finite index, then
\[
Z(\Rep^k(\Omega G)) \,\cong\, \Rep^k(LG).
\]
\end{theorem}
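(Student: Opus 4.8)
The plan is to deduce Theorem~\ref{Thm1} from the general result that, for any conformal net $\cA$ of finite index, the category $\mathrm{Sol}(\cA)$ of solitons (Definition~\ref{def: Solitons}) is a bicommutant category whose Drinfel'd center recovers $\Rep(\cA)$. Indeed, under the defining identifications $\Rep^k(\Omega G):=\mathrm{Sol}(\cA_{G,k})$ and $\Rep^k(LG):=\Rep(\cA_{G,k})$, the theorem is exactly that general equivalence specialised to $\cA=\cA_{G,k}$, whose finite index is the standing hypothesis. It therefore suffices to establish
\[
Z\big(\mathrm{Sol}(\cA)\big)\,\cong\,\Rep(\cA)
\]
as braided tensor categories for an arbitrary finite-index net $\cA$.

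First I would build the comparison functor $F\colon\Rep(\cA)\to Z(\mathrm{Sol}(\cA))$. A representation of $\cA$ on the circle restricts, on the complement of a point, to a soliton; this furnishes the underlying object $F(\rho)$. The extra datum of a Drinfel'd-center object---a half-braiding $\gamma_Y\colon F(\rho)\otimes Y\xrightarrow{\sim}Y\otimes F(\rho)$ natural in all solitons $Y$---would be supplied by the transportability of $\rho$: because $\rho$ is genuinely defined across the cut point, any second soliton can be moved past it, and the resulting monodromy is precisely a half-braiding. Verifying the hexagon and naturality axioms is where conformal covariance and the geometry of $S^1$ enter.

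For the inverse, given a soliton $X$ equipped with a half-braiding $\gamma$, I would interpret $\gamma$ as the gluing datum extending $X$ from the line across the cut point to a bona fide representation of $\cA$ on the circle: the half-braiding records exactly how $X$ must commute with the local algebras straddling the point. One then checks that this construction is inverse to $F$, and that $F$ intertwines the DHR braiding on $\Rep(\cA)$ with the canonical braiding on the center. The finite-index hypothesis is used throughout---via Haag duality, the split property, and strong additivity---both to ensure that $\mathrm{Sol}(\cA)$ is a well-behaved (bicommutant, rigid) category with a tractable center, and to guarantee that there are enough solitons for the half-braiding condition to be as rigid as extendability.

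The main obstacle I anticipate is precisely this last equivalence: proving rigorously that ``admitting a compatible half-braiding against every soliton'' is the same as ``extending across the point to a representation on the circle.'' This is the categorical shadow of a Haag-duality/reconstruction statement at the level of von Neumann algebras, and controlling it---rather than merely at the level of the geometric picture above---is the analytic heart of the argument. A secondary difficulty is matching the two braidings exactly, since the DHR braiding depends on a left/right localisation convention that must be reconciled with the Drinfel'd-center one.
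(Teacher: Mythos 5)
Your first reduction is exactly the paper's: Theorem~\ref{Thm1} is proved there in one line as the special case $\cA=\cA_{G,k}$ of the general statement for finite-index nets (Theorem~\ref{thm: If Rep(cA_G,k) is mod...}), and your two comparison functors (restrict a representation to a soliton; read a half-braiding as a gluing datum across the cut point) are the right geometric picture. The paper's actual route is slightly different in structure: it never works with $Z(T_\cA)$ directly, but instead computes the commutant of $T^+_\cA$ inside $\Bim(R)$, proves $(T^+_\cA)'=T^-_\cA$, and then obtains the center as the intersection $Z(T^+_\cA)=Z_{\Bim(R)}(T^+_\cA)\cap T^+_\cA=T^-_\cA\cap T^+_\cA=\Rep(\cA)$, using Lemma~\ref{lem: essential images} for the last equality. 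That detour through $\Bim(R)$ is what also yields the bicommutant statement, but for the center alone your direct formulation is equivalent.

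The genuine gap is in the step you yourself flag as the analytic heart: you give no mechanism for converting ``a half-braiding against every soliton'' into ``an extension of the $\cA(I_1)$- and $\cA(I_4)$-actions to $\cA(I_{41})$,'' nor for the injectivity/fullness of your functor $F$. The missing idea is the \emph{absorbing object} $\Omega=H_0(\Delta,\cA)$ of Section~\ref{sec: The absorbing object}. Finite index is used \emph{only} to make $\Omega$ absorbing (Proposition~\ref{prop: Omega is absorbent}); your attribution of the hypothesis to ``Haag duality, the split property, and strong additivity'' is off the mark, since those hold for every conformal net in this framework and do not by themselves force extendability. Once $\Omega$ is absorbing, two things follow that your sketch needs and does not supply: (i) a half-braiding is completely determined by its single component $e_\Omega$ (Propositions~\ref{prop:DeltaDeterminesHalfBraidings} and~\ref{prop: at most one half-braiding}), which gives uniqueness of half-braidings and hence that $F$ is fully faithful rather than merely essentially surjective; and (ii) since $\Omega$ is a geometrically twisted copy of $L^2(R)$, the one unitary $e_\Omega:H\boxtimes\Omega\to\Omega\boxtimes H$ can be unwound into precisely the intertwining identities \eqref{eq: intertwining property of F} that glue the actions across the point. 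Without an object playing this role, ``the half-braiding records how $X$ commutes with the algebras straddling the point'' remains a slogan: a priori the half-braiding is an unmanageable family indexed by all solitons, and there is no single algebra element or unitary to extract the gluing from. A secondary, fixable omission is that the unitaries $u_i$ implementing $\varphi_i$ do not act on $H\boxtimes K$ when $H$ and $K$ are solitons, so even defining the half-braiding of $F(\rho)$ against solitons (your ``transportability'') requires the naturality trick of Lemma~\ref{lem: NT between module categories}.
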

\begin{proof}
This is a special case of Theorem~\ref{thm: If Rep(cA_G,k) is mod...}, in Section~\ref{sec: main results}.
\end{proof}
\begin{remark}\label{rem: potentially infinite direct sum}
If a conformal net $\cA$ has finite index, then $\Rep_{\mathrm f}(\cA)$ is a modular tensor category, and $\Rep(\cA)=\mathsf{Hilb}\otimes_{\mathsf{Vec}}\Rep_{\mathrm f}(\cA)$
\cite[Cor.\,37]{MR1838752}\cite[Thm.\,3.9]{BDH-cn2}.\footnote{
The braiding on $\Rep(\cA)$ defined in \cite[Sec. 3B]{BDH-cn2} has not been compared to the one in \cite{MR1838752}.
We can therefore not exclude the possibility that, when $\mu(\cA)<\infty$, the category $\Rep(\cA)$ has two \emph{distinct} modular structures.
The braided structure used in Theorem~\ref{thm: If Rep(cA_G,k) is mod...} is the one used in \cite{MR1838752}.}
The latter implies that every object of $\Rep(\cA)$ is a (potentially infinite) direct sum of simple objects.
%In particular, every object of $\Rep(\cA)$ is a direct sum of objects of $\Rep_{\mathrm f}(\cA)$.
\end{remark}

\begin{remark}
When $G$ is not simply connected, we presently do not know, in general, whether the vertex algebra $V_{G,k}$ is unitary.
When $G\not= SU(n)$, it is not known whether $\cA_{G,k}$ is completely rational or whether $\Rep_{\mathrm f}(\cA_{G,k})$ is modular, except in some isolated cases.
Even when $G=SU(n)$, where it is known that  $\Rep_{\mathrm f}(\cA_{G,k})$ is modular, it is not known whether $\Rep_{\mathrm f}(V_{G,k})\cong\Rep_{\mathrm f}(\cA_{G,k})$ as (modular) tensor categories, except when $n=2$ \cite[\S3]{CS(pt)}.
Establishing the above properties are important open problems.
\end{remark}

\subsection{Representations and solitons}\label{sec: Representations and solitons}

Conformal nets \cite[Def.\,1.1]{BDH-cn1} are functors $\cA:\mathsf{INT}\to\mathsf{VN}$ from the category of intervals (an \emph{interval} is a manifold diffeomorphic to [0,1]) to the category of von Neumann algebras (see Definition~\ref{Def:CN} for the axioms that such a functor should satisfy).
%Let $\cA$ be a conformal net in the sense of \cite[Def.\,1.1]{BDH-cn1} (see Section~\ref{sec: co free nets} for the definition).

Let $S^1:=\{z\in\C:|z|=1\}$ be the standard circle.
A \emph{representation} of a conformal net consists of a Hilbert space $H$ and a collection of compatible actions
\[
\rho_I:\cA(I)\to \B(H)
\]
of the algebras $\cA(I)$, where $I$ ranges over all subintervals of $S^1$.
We write $\Rep(\cA)$ for the category of representations of $\cA$ whose underlying Hilbert space is separable.
(Throughout this work, all Hilbert spaces are assumed to be separable. This will be important for the results in Section~\ref{sec: The absorbing object} to hold, see Remark~\ref{rem: dim le kappa}.)

The monoidal structure on $\Rep(\cA)$ is defined as follows.
Let $H$ and $K$ be representations. 
Let $I_+$ be the upper half of $S^1$, and let $I_-$ be its lower half.
Precomposing the left action of $\cA(I_+)$ on $H$ by the map
\begin{equation}\label{eq: flip circle half}
\cA(\;\!z\mapsto\bar z\;\!:I_-\to I_+)\;:\,\,\cA(I_-)^{\op}\to \cA(I_+)
\end{equation}
yields a right action of $\cA(I_-)$ on $H$. We let
\begin{equation*} %\label{eq: H boxtimes K:=H boxtimes_cA(I_-) K}
H\boxtimes K:=H\boxtimes_{\cA(I_-)} K.
\end{equation*}
Here, the symbol $\boxtimes$ denotes Connes' relative tensor product (see Section~\ref{sec: Bim(R)} for a definition).
The algebra $\cA(I_-)$ acts on $K$ in the usual way, and it acts on $H$ on the right as described above.

The left actions of $\cA(I_-)$ on $H$ and of $\cA(I_+)$ on $K$ induce corresponding actions on $H\boxtimes K$.
For every interval $I\subset S^1$, 
the actions of\;\!\footnote{Here, we use the convention $\cA(I_1\sqcup I_2):=\cA(I_1)\,\bar\otimes\,\cA(I_2)$, where $\bar\otimes$ denotes the spatial tensor product, to define the value of $\cA$ on disjoint unions of intervals.}
$\cA(I\cap I_-)$ and $\cA(I\cap I_+)$ on $H\boxtimes K$ extend to an action
\[
\rho_I:\cA(I)\to \B(H\boxtimes K).
\]
%\cite[Def.\,1.31 and Lem.\,1.9]{BDH-cn1}
Together, these equip $H\boxtimes K$ with the structure of a representation.
We refer the reader to Section~\ref{sec: Reps of nets} for more details. % on this construction. 
There is also a braiding on $\Rep(\cA)$, discussed in Section~\ref{sec: the braiding}.

A \emph{soliton} of a conformal net is something akin to a representation \cite{MR1652746, MR1892455, MR1332979, MR2100058}
(the usage of the term `soliton' in algebraic quantum field theory goes back to at least \cite{MR0413868}):

\begin{definition}[{\cite[\S3.0.1]{MR2100058}}]\label{def: Solitons}
A soliton of $\cA$ is a Hilbert space (always assumed separable) equipped with compatible actions of the algebras $\cA(I)$, where $I$ ranges over all subintervals of the standard circle whose interior does not contain the base point $1\in S^1$.
We write $T_\cA$ for the category of solitons of $\cA$. % whose underlying Hilbert space is separable.
\end{definition}

Equivalently, a soliton is a Hilbert space equipped with compatible actions of all the algebras $\cA(I)$
as $I$ ranges over all subintervals $I\subsetneq S^1_{\mathrm{cut}}$,
where $S^1_{\mathrm{cut}}$ is the manifold obtained from the standard circle by removing its base point and replacing it by two points:
\[
S^1:\,\,\, \begin{matrix}\tikz{\draw circle (.6);\fill (.6,0) circle (.022);}\end{matrix}
\qquad\qquad
S^1_{\mathrm{cut}}:\,\,\, \begin{matrix} \tikz{\draw (0,0) arc(5:355:.6) coordinate (x);\fill (0,0) circle (.02) (x) circle (.02);}\end{matrix}
\]
Further down, we sometimes write $T^+_\cA$ in place of $T_\cA$, for reasons that will become clear later on. %\medskip

The monoidal structure on $T_\cA$ is defined in the same way as the one of $\Rep(\cA)$.
Given two solitons $H$ and $K$, 
we consider the right action $\cA(I_-)^\op\to B(H)$ given as the composite of the map \eqref{eq: flip circle half} with the left action $\cA(I_+)\to B(H)$,
and we let
\begin{equation}\label{eq: def of H boxtimes K}
H\boxtimes K:=H\boxtimes_{\cA(I_-)} K.
\end{equation}
The left actions of $\cA(I_-)$ on $H$ and of $\cA(I_+)$ on $K$ induce corresponding actions on $H\boxtimes K$.
Finally, for any interval $I\subset S^1$, $1\not\in \mathring I$,
the actions of $\cA(I\cap I_-)$ and $\cA(I\cap I_+)$ extend to an action 
\[
\rho_I:\cA(I)\to \B(H\boxtimes K).
\]
The details of his construction can be found in Section~\ref{sec: solitons as bimodules}.
%All together, these actions equip $H\boxtimes K$ with the structure of a soliton (see Section~\ref{sec: solitons as bimodules} for more details).

\begin{remark}
We remind the reader that, by definition, when $\cA=\cA_{G,k}$, the category of solitons agrees with the category $\Rep^k(\Omega G)$ of locally normal representations of the based loop at level $k$.
\end{remark}

\subsection{Bicommutant categories}\label{sec: intro: Bicommutant categories}

Bicommutant categories are higher categorical analogs of von Neumann algebras.
They are obtained by replacing the algebra $B(H)$, in the definition of a von Neumann algebra, by the tensor category $\Bim(R)$ of all bimodules over a hyperfinite factor.

Let $R$ be a hyperfinite factor, %which is not of type $\mathrm I$, 
and let $\Bim(R)$ be its category of bimodules, equipped with the monoidal structure given by Connes' relative tensor product % (see Section~\ref{sec: Bim(R)}).
(we insist that all Hilbert spaces be separable).
The category $\Bim(R)$ admits an antilinear involution at the level of objects (the conjugate of a bimodule) and a second involution at the level of morphisms (the adjoint of a linear map).
Together, these two involutions equip this category with the structure of a \emph{bi-involutive tensor category} (Definition~\ref{def: bi-involutive tensor category}). %\cite[Def.\,2.3]{Bicommutant-categories-from-fusion-categories} 

A \emph{bicommutant category} is a particular kind of bi-involutive tensor category.
% $T$ for which there exists a functor
%\[
%T\to \Bim(R)
%\]
%such that $T$ is equivalent to its bicommutant in $\Bim(R)$
%(in the cases treated in this paper, this functor will always be a fully faithful).
%We explain what this means.
Given a bi-involutive functor $\iota:T\to B$ between bi-involutive tensor categories, %\cite[Def.\,2.5]{Bicommutant-categories-from-fusion-categories},
one may consider the \emph{commutant} $Z_B(T)$ of $T$ inside $B$. %\cite[\S2.3]{Bicommutant-categories-from-fusion-categories} .
The objects of $Z_B(T)$ are pairs $(X, e)$ with $X\in B$ and $e= (e_{Y})_{Y\in T}$ a unitary half-braiding
$e_{Y}:X\otimes \iota(Y) \to \iota(Y)\otimes X$,
natural in $Y$, and subject to the `hexagon' axiom
$e_{Y_1\otimes Y_2}=(\id_{\iota(Y_1)}\otimes e_{Y_1})\circ(e_{Y_1}\otimes\id_{\iota(Y_2)})$ (see Section~\ref{sec: commutant of a tensor category} for more details).
The category $Z_B(T)$ is again bi-involutive, and is equipped with a bi-involutive functor $(X,e)\mapsto X$ to~$B$:
\[
T\,\,\rightarrow\,\, B\,\,\leftarrow\,\, Z_B(T).
\]
The \emph{Drinfel'd center} is a special case of the above notion:
\begin{definition}\label{def: Drinfel'd center}
The Drinfel'd center $Z(T)$ of a bi-involutive tensor category $T$ is the commutant of $T$ inside itself.
\end{definition}

\noindent The Drinfel'd center of a bi-involutive tensor category is braided and bi-involutive.

When $B=\Bim(R)$, we write $\cC':=Z_{\Bim(R)}(T)$ for the commutant of $T$ inside $\Bim(R)$.
There is an obvious `inclusion' functor $T\to T''$ from any category to its bicommutant
which sends an object $Y\in T$ to the object $(\iota(Y),e')$, with half-braiding $e'$ 
given by $e'_{(X,e)}:=e_{Y}^{-1}$ for $(X,e)\in \cC'$.

\begin{definition}
A bicommutant category is a bi-involutive tensor category $T$ for which there exists a hyperfinite factor $R$ and
a bi-involutive functor $T\to \Bim(R)$ such that the inclusion functor $T\to T''$ is an equivalence of (bi-involutive tensor) categories.
\end{definition}

The category of solitons of a conformal net is bi-involutive in the following way.
Given $H\in T_\cA$, with actions $\rho_I:\cA(I)\to \B(H)$ for $I\subsetneq S^1_{\mathrm{cut}}$,
its conjugate $\hspace{.2mm}\overline {\hspace{-.2mm}H}$ is the complex conjugate Hilbert space equipped with the actions
\begin{equation}\label{eq: involutive structure on T_A}
\cA(I)\xrightarrow{\cA(z\mapsto\bar z)}\cA(\bar I)^{\op}\xrightarrow{\,\,\,*\,\,\,}\overline{\cA(\bar I)}\xrightarrow{\,\,\,\overline{\rho_{\bar I}}\,\,\,}\overline{\B(H)}=\B(\overline H).
\end{equation}
Here, $\bar I$ denotes the image of $I\subset S^1$ under the complex conjugation map $S^1\to S^1$.
The conjugation operation on $T_\cA$ squares to the identity, and satisfies $\overline{H\boxtimes K}\,\cong\, \overline K\boxtimes \overline H$.

Given a conformal net $\cA$, set $R:=\cA(I_-)$. %, where $I_+$ is, as before, the upper half of the standard circle.
Then there is an obvious fully faithful bi-involutive functor
\begin{equation}\label{eq: T_A to Bim(R)}
T_\cA\to\Bim(R).
\end{equation}
It sends a soliton $H$ to the $R$-$R$-bimodule with left action given by the usual left action of $\cA(I_-)$ on $H$, and right action 
given by the left action of $\cA(I_+)$ precomposed by the map \eqref{eq: flip circle half}.
One of our main results (Corollary~\ref{cor: main corollary}) is that when $\cA$ has finite index, the above functor exhibits $T_\cA$ as a bicommutant category.

\subsection{Main results}\label{sec: main results}

%There is an important invariant $\mu(\cA)\in \R_+\cup\{\infty\}$ of a conformal net called the $\mu$-index \cite{MR1838752}.
%By definition, $\mu(\cA)$ is the Jones--Kosaki index \cite{MR696688, MR829381} of the subfactor \smallskip
%\[
%\cA(I_1)\vee \cA(I_3)\subset (\cA(I_2)\vee \cA(I_4))',\smallskip
%\]
%where $I_1, I_2, I_3, I_4\subset S^1$ are cyclically ordered intervals that touch each other at four points,
%and the prime denotes the commutant in the vacuum sector.

Recall that $T_\cA=T^+_\cA$ is the category whose objects are 
Hilbert spaces equipped with compatible actions of the algebras $\cA(I)$, for $I\subset S^1$, $1\not\in\mathring I$.

Let $T^-_\cA$ denote the category whose objects are 
Hilbert spaces equipped with compatible actions of %the algebras 
$\cA(I)$, for $I\subset S^1$, $-1\not\in\mathring I$.
Letting $R:=\cA(I_-)$,
the same formulas \eqref{eq: def of H boxtimes K} and \eqref{eq: involutive structure on T_A}
endow $T^-_\cA$ with the structure of a bi-involutive tensor category, and we have a bi-involutive functor
\[
T^-_\cA\to\Bim(R).
\]

\begin{maintheorem}\label{thm: If Rep(cA_G,k) is mod...}
Let $\cA$ be a conformal net with finite index and let $R:=\cA(I_-)$. % be the algebra it assigns to the lower semi-circle.
Let $T_\cA=T^+_\cA$ be its category of solitons, with canonical inclusion $T^+_\cA\to \Bim(R)$ as in \eqref{eq: T_A to Bim(R)}.
Then:
\begin{itemize}
\item The canonical map $(T^+_\cA)'\to \Bim(R)$ is fully faithful and we have $(T^+_\cA)'=T^-_\cA$.
\item The canonical map $(T^-_\cA)'\to \Bim(R)$ is fully faithful and we have $(T^-_\cA)'=T^+_\cA$.
\item The Drinfel'd center of $T^+_\cA$ is equivalent to $\Rep(\cA)$ as a braided bi-involutive tensor category.
\end{itemize}
\end{maintheorem}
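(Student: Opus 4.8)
```latex
\textbf{Proof proposal.}
The plan is to prove the three statements together, treating the identification of the Drinfel'd center as the payoff of the first two commutant computations. The overarching strategy is to trade the \emph{global} structure of a representation of $\cA$ on $S^1$ for \emph{local} data on the two half-circles $I_+$ and $I_-$, exploiting the fact that a representation of $\cA$ is precisely a soliton together with an identification of the two local pictures around the two punctures.

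First I would establish that the canonical functor $(T^+_\cA)'\to\Bim(R)$ lands exactly in $T^-_\cA$. An object of $(T^+_\cA)'$ is a pair $(X,e)$ with $X$ an $R$-$R$-bimodule and $e$ a unitary half-braiding against every object of $T^+_\cA$. The key is that the half-braiding, being natural in the soliton variable, must in particular be compatible with the standard (``vacuum'') soliton $H_0=L^2(R)$ and with the subalgebras $\cA(I)$ localized in the $\emph{upper}$ half-circle; I would extract from $e$ a commuting family of actions of the algebras $\cA(I)$ with $-1\notin\mathring I$, using the half-braiding to transport the right $R=\cA(I_-)$-action across the intervals approaching the base point $1$. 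The half-braiding relations (naturality plus the hexagon) are exactly what forces these transported actions to be compatible, i.e. to assemble into a soliton in $T^-_\cA$. Conversely, an object of $T^-_\cA$ canonically produces a half-braiding against $T^+_\cA$ because solitons localized away from $+1$ and solitons localized away from $-1$ have disjoint ``singular points'', so their relative tensor products can be commuted by a Connes-fusion associativity/flip argument; here one uses finite index to guarantee that the relevant intertwiner spaces are the right size (dualizability of the objects) and that the half-braiding so produced is unitary. By symmetry (swapping the roles of $+1$ and $-1$) the same argument gives $(T^-_\cA)'=T^+_\cA$, so the two categories are each other's commutants inside $\Bim(R)$; full faithfulness of each inclusion into $\Bim(R)$ follows because a morphism of half-braided objects is just a bimodule map commuting with the extra local actions, and a soliton morphism is exactly such a map.

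For the third statement I would compute $Z(T^+_\cA)=Z_{T^+_\cA}(T^+_\cA)$. The crucial reduction is that a half-braiding on an object of $T^+_\cA$ against all of $T^+_\cA$ refines the bimodule-level half-braiding computed above: an object of the center is an object $X\in T^+_\cA$ (so it carries actions of $\cA(I)$ for $1\notin\mathring I$) together with a half-braiding that, by the previous paragraph's analysis, equips $X$ \emph{simultaneously} with the structure making it an object of $T^-_\cA$ (so it carries actions for $-1\notin\mathring I$). Having compatible actions of $\cA(I)$ for $I$ avoiding $+1$ and for $I$ avoiding $-1$, and checking via the hexagon that these agree on overlaps, is precisely the data of compatible actions of $\cA(I)$ for \emph{all} $I\subset S^1$, i.e. an object of $\Rep(\cA)$. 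I would then verify that this passage is functorial and fully faithful, that it intertwines the monoidal structures (both use $\boxtimes$ over $\cA(I_-)$), that it matches the braiding on $Z(T^+_\cA)$ with the braiding on $\Rep(\cA)$ in the sense of \cite{MR1838752} as flagged in Remark~\ref{rem: potentially infinite direct sum}, and that it is compatible with the bi-involutive structures given by \eqref{eq: involutive structure on T_A}. Essential surjectivity amounts to recovering the center-half-braiding from a genuine representation, which is the reverse construction and uses finite index once more (via Remark~\ref{rem: potentially infinite direct sum}) to control infinite direct sums of simples.

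The main obstacle I expect is the precise gluing step: showing that a half-braiding, which is a priori only a coherent system of isomorphisms of \emph{relative tensor products}, actually integrates into honest von Neumann algebra actions of the $\cA(I)$ near the excised base point, and that the two local soliton structures glue across the cut to a single global circle action without ambiguity. This is where the finite-index hypothesis does real work, since it is needed to identify commutants of the local algebras with the ``opposite'' local algebras (Haag duality on the circle, a consequence of complete rationality) and thereby to guarantee that the transported actions are surjective onto the intended target algebras rather than just landing in some larger commutant. Controlling the unitarity and naturality of the half-braidings uniformly in the soliton variable, and matching the resulting braiding with the established one on $\Rep(\cA)$, will require careful bookkeeping of the flip map \eqref{eq: flip circle half} and the conjugation \eqref{eq: involutive structure on T_A}.
```
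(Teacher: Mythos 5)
Your outline gets the architecture right: prove the first bullet, get the second by symmetry, and obtain the third from the computation $Z(T^+_\cA)=Z_{\Bim(R)}(T^+_\cA)\cap T^+_\cA=T^-_\cA\cap T^+_\cA=\Rep(\cA)$, which is exactly how the paper concludes. But the core technical step is missing, and the one concrete mechanism you propose for it would fail. You suggest extracting the $T^-_\cA$-structure on an object $(X,e)\in(T^+_\cA)'$ by evaluating the half-braiding on ``the standard (vacuum) soliton $H_0=L^2(R)$.'' That object is the tensor unit of $T^+_\cA$, so $e_{L^2(R)}$ is forced by the unit constraints and carries no information whatsoever. The paper instead evaluates $e$ on the \emph{absorbing object} $\Omega=H_0(\Delta,\cA)$, the vacuum sector of a triangle, which is a highly non-trivial soliton with $\mathrm{End}_{T_\cA}(\Omega)=\cA(\Delta_{\mathrm{free}})$; naturality of $e$ with respect to this large endomorphism algebra, together with Haag duality making $\Omega$ an invertible $\cA(\Delta_-\cup\Delta_{\mathrm{free}})$-$R$-bimodule, is what produces the map $F:\Omega\boxtimes H\to H$ and lets one glue the $\cA(I_1)$- and $\cA(I_4)$-actions into an $\cA(I_{41})$-action via a smooth identification $\varphi_\Delta\cup(j\circ\varphi_\Delta\circ b):I_{41}\to\Delta$. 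The absorbing object is also what gives uniqueness of half-braidings (Proposition~\ref{prop: at most one half-braiding}), without which you cannot prove fullness of $(T^+_\cA)'\to\Bim(R)$ or conclude that the functor $T^-_\cA\to(T^+_\cA)'$ hits every object up to the \emph{given} half-braiding. Relatedly, you misplace the finite-index hypothesis: it is not needed for Haag duality (which is built into the axioms of Definition~\ref{Def:CN}) nor for dualizability of objects; its sole role is to make $\Omega$ absorbing (Proposition~\ref{prop: Omega is absorbent}).

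The other direction --- producing a half-braiding from an object of $T^-_\cA$ --- is also underestimated. A bare ``Connes-fusion associativity/flip argument'' does not commute $H\boxtimes K$ past $K\boxtimes H$: the flip of Remark~\ref{rm: HK = KH} changes the algebra one fuses over, and the genuine braiding requires moving localization regions with diffeomorphisms $\varphi_1,\varphi_2$, implementing them by unitaries via inner covariance, and (since solitons do not carry an action of $\cA(I_{341})$) extending the intertwiners $U_1^{(H,-)}$ from the vacuum sector to all of $\cA(I_{34})$-Mod by complete additivity (Lemma~\ref{lem: NT between module categories} and Corollary~\ref{cor: NT whose value on the vacuum sector}). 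Your instinct that the ``gluing step'' near the cut point is the hard part is correct, but the tool that resolves it is the absorbing object, not Haag duality from complete rationality.
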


\begin{corollary}\label{cor: main corollary}
If $\cA$ is a conformal net with finite index, then $T_{\cA}$ is a bicommutant category.
\end{corollary}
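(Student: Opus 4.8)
The plan is to deduce the corollary directly from the Main Theorem (Theorem~\ref{thm: If Rep(cA_G,k) is mod...}), the only genuine work being to confirm that the \emph{canonical} inclusion functor appearing in the definition of a bicommutant category is the equivalence supplied by that theorem. First I would fix the hyperfinite factor: the conformal net axioms (Definition~\ref{Def:CN}) ensure that each local algebra is a hyperfinite factor, so $R:=\cA(I_-)$ is an admissible choice, and the functor $\iota:T^+_\cA\to\Bim(R)$ of \eqref{eq: T_A to Bim(R)} is the fully faithful bi-involutive functor recorded in the text. By definition, $T_\cA=T^+_\cA$ is a bicommutant category precisely when the inclusion functor $T^+_\cA\to(T^+_\cA)''$ is an equivalence of bi-involutive tensor categories, so this is what remains to be shown.

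Next I would assemble the two commutants from the Main Theorem. Its first bullet identifies $(T^+_\cA)'$ with $T^-_\cA$ as the same full bi-involutive subcategory of $\Bim(R)$, the canonical (forgetful) functor $(T^+_\cA)'\to\Bim(R)$ being fully faithful. Since $(T^+_\cA)'$ and $T^-_\cA$ are thus literally the same subcategory of $\Bim(R)$, their commutants inside $\Bim(R)$ coincide, and the second bullet evaluates the result:
\[
(T^+_\cA)''=\big((T^+_\cA)'\big)'=(T^-_\cA)'=T^+_\cA ,
\]
again as a full subcategory of $\Bim(R)$ whose canonical functor $(T^+_\cA)''\to\Bim(R)$ is fully faithful.

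Finally I would check that the abstract inclusion functor realises this identification. The inclusion sends $Y\in T^+_\cA$ to the pair $(\iota(Y),e')$ with half-braiding $e'_{(X,e)}:=e_Y^{-1}$, and composing it with the forgetful functor $(T^+_\cA)''\to\Bim(R)$ returns $\iota$. The key to exploit is that this forgetful functor is fully faithful (it is the canonical functor of $(T^-_\cA)'$), so the half-braiding on any object of $(T^+_\cA)''$ is uniquely determined by its underlying bimodule; in particular $(\iota(Y),e')$ is the unique object of $(T^+_\cA)''$ lying over $\iota(Y)$. Because $(T^+_\cA)''=T^+_\cA$ inside $\Bim(R)$, every object of $(T^+_\cA)''$ lies over some $\iota(Y)$, so the inclusion functor is essentially surjective; and it is fully faithful since its composite with the fully faithful forgetful functor is the fully faithful $\iota$. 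Being a bi-involutive tensor functor that is an equivalence of underlying categories, the inclusion is an equivalence of bi-involutive tensor categories, which is exactly the assertion that $T_\cA$ is a bicommutant category.

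I expect the only delicate point to be this last step: matching the abstractly defined half-braiding $e'_{(X,e)}=e_Y^{-1}$ with the canonical soliton half-braiding underlying the identification $(T^-_\cA)'=T^+_\cA$. The uniqueness of half-braidings forced by full faithfulness of the forgetful functor does this bookkeeping automatically, so that no further computation with the concrete soliton braidings is required; all the substantive content has been absorbed into the Main Theorem.
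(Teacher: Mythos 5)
Your proposal is correct and follows exactly the route the paper intends: the corollary is stated without proof as an immediate consequence of the first two bullet points of Theorem~\ref{thm: If Rep(cA_G,k) is mod...}, giving $(T^+_\cA)''=(T^-_\cA)'=T^+_\cA$ inside $\Bim(R)$. Your extra care in matching the abstract inclusion functor's half-braiding $e'_{(X,e)}=e_Y^{-1}$ with the identification, via uniqueness of half-braidings forced by full faithfulness of the forgetful functor, is a detail the paper leaves implicit (and is consistent with its Proposition~\ref{prop: at most one half-braiding}).
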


\begin{remark}
The main theorem in \cite[\S5]{CS(pt)} is stated as an equivalence of \emph{balanced} tensor categories (a balanced tensor category is a braided tensor categories with twists \cite{MR1107651}).
When $X$ is a dualizable object,
the twist $\theta_X:X\to X$ is expressible in terms of the braiding and the dagger structure as
$\theta_X:=(\mathrm{ev}_X\otimes\id)(\id\otimes \beta_{X,X})(\mathrm{ev}_X^*\otimes\id)$,
where $\mathrm{ev}_X:\overline X\otimes X\to 1$ and $\mathrm{coev}_X:1\to X\otimes \overline X$ are solutions to the normalized duality equations \cite[\S2.2]{Bicommutant-categories-from-fusion-categories}.
This can then be extended to arbitrary objects by additivity (see Remark~\ref{rem: potentially infinite direct sum}).
\end{remark}

\begin{remark}
If we do not assume that $\cA$ has finite index, then we can still define the tensor functor $T^-_\cA\to (T^+_\cA)'$ and the
braided tensor functor 
$\Rep(\cA)\to Z(T_{\cA})$,
but we do not know whether they are equivalences.
\end{remark}

\subsection*{Acknowledgments}

I am deeply indebted to Arthur Bartels and Christopher Douglas for the long-time collaboration that set the foundations on which the present work is resting,
and I am grateful to Jacob Lurie for suggesting, back in 2008, that one could use our work with A. Bartels and C. Douglas to understand what Chern-Simons theory assigns to a point.
This research was supported by the ERC grant No 674978 under the European Union's Horizon 2020 research innovation programme.

%I also thank David Ayala, Bruce Bartlett, David Ben-Zvi, Marcel Bischoff, Sebastiano Carpi, Dan Freed, Yi-Zhi Huang, Jacob Lurie, Hessel Posthuma, Chris Schommer-Pries, Urs Schreiber, Christoph Schweigert, Graeme Segal, %Noah Snyder, Peter Teichner, Constantin Teleman, and Konrad Waldorf for many fruitful discussions, help, and advice.
%Finally, many thanks to MSRI for its hospitality during the spring of 2014,\footnote{Part of this work was completed while the author was in residence at MRSI.}
%and to the Leverhulme trust for its financial support while visiting Oxford in 2014--15.

%\footnotesize
%\bibliographystyle{abbrv}        \bibliography{CS(pt)}        \end{document}

\section{Bicommutant categories}

Bicommutant categories are higher categorical analogs of von Neumann algebras.
They %notion of a bicommutant category was 
were introduced in \cite{CS(pt)}, and the first examples were constructed in \cite{Bicommutant-categories-from-fusion-categories}.

Let $R$ be a hyperfinite factor, %which is not of type $\mathrm{I}$ 
and let $R\text{-Mod}$ be the category of $R$-modules whose underlying Hilbert space is separable.
We think of $R\text{-Mod}$ as a higher categorical analog of an infinite dimensional Hilbert space.
Our slogan is:
von Neumann algebras act on Hilbert spaces;
bicommutant categories act on categories like $R\text{-Mod}$.

In this context, the higher categorical analog of $\B(H)$ is
the tensor category $\mathrm{End}(R$-Mod$)$ of completely additive endofunctors of $R\text{-Mod}$ (see \cite[Lecture 21]{Lurie(Lectures-on-vnAlg)} or \cite[\S B.VIII]{BDH-cn3} for a definition of completely additive functors).
The latter is equivalent to the tensor category $\Bim(R)$ of all $R$-$R$-bimodules.

Recall that a von Neumann algebra is an algebra which admits a map to $B(H)$ such that the natural inclusion $A\to A''$ into its bicommutant is an isomorphism:
\[
A\to B(H)\,\,\qquad\quad A= A''.
\]
Analogously, a bicommutant category is a tensor category $T$ which admits a bi-involutive functor to $\Bim(R)$ such that
the natural inclusion functor $T\to T''$ of $T$ into its bicommutant is an equivalence of categories:
\[
T\to\Bim(R)\qquad\quad T\cong T''.
\]

%plays the role of $B(H)$ in this higher categorical game. %analog of $\B(H)$.

%A von Neumann algebra is a subalgebra of $\B(H)$ which satisfies $A''=A$.
%Similarly, a bicommutant category is a subcategory of $\Bim(R)$ which satisfies $T''=T$.

\subsection{The commutant of a tensor category}\label{sec: commutant of a tensor category}

Let $T$ be a tensor category.
The Drinfel'd center $\dot Z(T)$ of $T$ is the category whose objects are pairs $(X, e)$, where $X$ is an object of $T$ and $e=(e_Y : X \otimes Y \xrightarrow{\scriptscriptstyle\cong} Y \otimes X)_{Y\in T}$ is a family of isomorphisms called a \emph{half-braiding}.
The half-braiding is required to be natural in $Y$, and to make the following diagram\footnote{Here, we have suppressed associators for brevity. By adopting this simplified notation, we do not mean to imply that our tensor categories are strict.} commute for every $Y,Z \in T$:
\begin{equation}\label{eq: ax 1/2-br}
\tikzmath{
\node(A) at (0,0) {$X \otimes Y \otimes Z$};
\node(B) at (3,1) {$Y \otimes X \otimes Z$};
\node(C) at (6,0) {$Y \otimes Z \otimes X$.\!};
\draw[->] (A) --node[above, pos=.37, xshift=-8]{$\scriptstyle e_Y\otimes \id_Z$} (B);
\draw[->] (B) --node[above, pos=.65, xshift=6]{$\scriptstyle \id_Y\otimes e_Z$} (C);
\draw[->] (A) --node[above]{$\scriptstyle e_{Y\otimes Z}$} (C);
}
\end{equation}
A morphism $(X^1, e^1)\to (X^2, e^2)$
in the Drinfel'd center is a morphism $f:X^1\to X^2$ in $T$ such that $(\id_Y\otimes f)\circ e^1_Y = e^2_Y\circ (f\otimes \id_Y)$ for every $Y\in T$.
The tensor product of two objects of $\dot Z(T)$ is given by
$(X^1, e^1)\otimes (X^2, e^2):=(X^1\otimes X^2, e^{12})$
with $e^{12}_Y:=(e^1_Y\otimes \id_{X^2})\circ(\id_{X^1}\otimes e^2_Y)$.
Finally, $\dot Z(T)$ is equipped with a braiding
\[
\beta:(X^1, e^1)\otimes (X^2, e^2)\xrightarrow{\scriptscriptstyle\cong} (X^2, e^2)\otimes (X^1, e^1)
\]
given by $e^1_{X^2}$.
Basic references include \cite{MR1107651, %A. Joyal \& R. Street: Tortile Yang-Baxter operators in tensor categories. J. Pure Appl. Alg. 71, 43-51 (1991). // 
MR1151906, %S. Majid: Representations, duals and quantum doubles of monoidal categories. Rend. Circ. Mat. Palermo Suppl. 26, 197-206 (1991). // 
MR1966525}. %From Subfactors to Categories and Topology II.

The above definition can be relativized to the case when $T$ is a subcategory of some bigger tensor category $B$
(or, more generally, when $T$ is equipped with a functor $\iota:T\to B$, not necessarily an inclusion).

\begin{definition}\label{def:  commutant of T inside B}
Let $\iota:T\to B$ be a tensor functor between tensor categories.
The commutant $\dot Z_B(T)$ of $T$ inside $B$ is the category whose objects are pairs $(X, e)$, where $X$ is an object of $B$ and
\begin{equation}\label{eq: formula half-braiding}
e=(e_Y : X \otimes \iota Y \to %\xrightarrow{\scriptscriptstyle\cong}
\iota Y \otimes X)_{Y\in T}
\end{equation}
is a collection of isomorphisms, called a half-braiding.
The half-braiding is required to be natural in $Y$, and to satisfy the following analog of \eqref{eq: ax 1/2-br} for every $Y,Z\in T$:
\begin{equation}\label{eq: ax 1/2-br ++}
\tikzmath{
\node(A) at (0,0) {$X \otimes \iota Y \otimes \iota Z$};
\node(a) at (0,-1) {$X \otimes \iota (Y \otimes Z)$};
\node(B) at (3,1) {$\iota Y \otimes X \otimes \iota Z$};
\node(C) at (6,0) {$\iota Y \otimes \iota Z \otimes X$};
\node(c) at (6,-1) {$\iota (Y \otimes Z) \otimes X$.\!\!};
\draw[->] (A) --node[above, pos=.37, xshift=-8]{$\scriptstyle e_Y\otimes \id_{\iota Z}$} (B);
\draw[->] (B) --node[above, pos=.65, xshift=6]{$\scriptstyle \id_{\iota Y}\otimes e_Z$} (C);
\draw[->] (a) --node[above]{$\scriptstyle e_{Y\otimes Z}$} (c);
\path (A) --node[rotate=90]{$\scriptstyle \cong$} (a);
\path (C) --node[rotate=90]{$\scriptstyle \cong$} (c);
}
\end{equation}
A morphism $(X^1, e^1)\to (X^2, e^2)$ in $\dot Z_B(T)$ is a morphism $f:X^1\to X^2$ in $B$ satisfying
$(\id_{\iota Y}\otimes f)\circ e^1_Y = e^2_Y\circ (f\otimes \id_{\iota Y})$ for every $Y \in B$.

The tensor product in $\dot Z_B(T)$ is given by the same formula as for the Drinfel'd center:
\[
(X^1, e^1)\otimes (X^2, e^2):=(X^1\otimes X^2, e^{12}),
\]
$e^{12}_Y:=(e^1_Y\otimes \id_{X^2})\circ(\id_{X^1}\otimes e^2_Y)$.

Finally, there is a tensor functor $\dot Z_B(T)\to B$ given by $(X,e)\mapsto X$.
\end{definition}

%\begin{remark}
\noindent
In the presence of dagger structures, the definitions of Drinfel'd center and of commutant of a tensor category inside another tensor category can be modified
by insisting that the half-braidings be \emph{unitary}.
We reserve the notations $Z(T)$ and of $Z_B(T)$ for the unitary versions.

\subsection{Bi-involutive tensor categories}

A \emph{dagger category} is a linear category over $\C$ equipped with an antilinear map $*:\Hom(X,Y)\to \Hom(Y,X)$ which satisfies $f^{**}=f$ and $(f\circ g)^*=g^*\circ f^*$.
An invertible morphism in a dagger category is called \emph{unitary} if $f^*=f^{-1}$.

\begin{definition}\label{def: tensor category}
A dagger tensor category %Selinger's survey
is a dagger category equipped with a monoidal structure whose associator and unitor isomorphisms are unitary, and which satisfies $(f\otimes g)^* =f^*\otimes g^*$.
\end{definition}

A dagger functor $F$ between dagger tensor categories is a \emph{dagger tensor functor} if %it is a dagger functor (i.e. satisfies $F(f)^*=F(f^*)$), and 
it comes along with a unitary natural transformation $\mu_{X,Y}:F(X)\otimes F(Y)\to F(X\otimes Y)$ and a unitary $i:1\to F(1)$ such that %the following identities hold:
$\mu_{X,Y\otimes Z}\circ(\id_{F(X)}\otimes\mu_{Y,Z}) = \mu_{X\otimes Y, Z}\circ (\mu_{X,Y}\otimes \id_{F(Z)})$ and
$\mu_{1,X}\circ (i\otimes \id_{F(X)}) = \id_{F(X)} = \mu_{X,1}\circ(\id_{F(X)}\otimes i)$. 

A dagger functor between dagger tensor categories is a \emph{dagger anti-tensor functor} if
it comes with a unitary natural transformation $\nu_{X,Y}:F(X)\otimes F(Y)\to F(Y\otimes X)$ and a unitary $j:1\to F(1)$ such that
$\nu_{X,Z\otimes Y}\circ (\id_{F(X)}\otimes\nu_{Y,Z}) = \nu_{Y\otimes X, Z}\circ (\nu_{X,Y}\otimes\id_{F(Z)})$ and
$\nu_{1,X}\circ(j\otimes \id_{F(X)}) = \id_{F(X)} = \nu_{X,1}\circ(\id_{F(X)}\otimes j)$.

Bi-involutive tensor categories are dagger tensor categories equipped with a second involution, denoted $X\mapsto \overline X$,
which is a dagger anti-tensor functor:

\begin{definition}\label{def: bi-involutive tensor category}
A bi-involutive tensor category is a dagger tensor category $T$ equipped with a covariant anti-linear dagger anti-tensor functor
\[
\overline{\,\cdot\,}:T\to T
\]
called the conjugate.
The structure data of this anti-tensor functor are denoted
\[
\nu_{X,Y}:\overline{X} \otimes \overline{Y} \stackrel\simeq\longrightarrow \overline{Y \otimes X}\qquad\text{and}\qquad j:1\to\overline 1.
\]
This functor is involutive, meaning that for every $X\in T$, we are given unitary natural isomorphisms $\varphi_X:X\to \overline{\overline{X}}$ satisfying $\varphi_{\overline{X}}=\overline{\varphi_{X}}$.
Finally, we require the compatibility conditions $\varphi_1=\overline j\circ j$ and $\varphi_{X \otimes Y}=\overline{\nu_{Y,X}}\circ\nu_{\overline X,\overline Y}\circ(\varphi_X\otimes\varphi_Y)$.
\end{definition}

\begin{definition} A dagger tensor functor $F$ between bi-involutive tensor categories is called a bi-involutive functor if 
it comes equipped with a unitary natural transformation
\[
\gamma_X:F(\overline X)\to \overline{F(X)}
\]
satisfying
$\gamma_{\overline X}=
\overline{\gamma_{X}}^{-1}\circ\varphi_{F(X)}\circ F(\varphi_X)^{-1}
$,\,
$\gamma_{1}=
\overline i\circ j\circ i^{-1}\circ F(j)^{-1}$, and
$\gamma_{X\otimes Y}=
\overline{\mu_{X,Y}}\circ\nu_{F(Y),F(X)}\circ (\gamma_Y\otimes\gamma_X)\circ\mu_{\overline Y,\overline X}^{-1}\circ F(\nu_{Y,X})^{-1}$.
\end{definition}

The prototypical example of a bi-involutive category is the category $\Bim(R)$ of all bimodules over a von Neumann algebra $R$.

Given a bi-involutive functor $T\to B$ between bi-involutive tensor categories, we let $Z_B(T)$ be the full subcategory of %$\dot Z_B(T)$
the category described in Definition~\ref{def:  commutant of T inside B} where the half-braidings \eqref{eq: formula half-braiding} are unitary.
This category has the advantage of being, once again, a bi-involutive tensor category.
The dagger structure is inherited from $B$, and the conjugate of an object $(X,e)\in Z_B(T)$ is given by $(\overline X, e')$, with
\[
e'_{Y}:
\overline X \otimes Y
\xrightarrow{\id \otimes \varphi_Y}
\overline X \otimes\overline{\overline{Y}}
\xrightarrow{\nu_{X,\overline{Y}}}
\overline{\overline{Y} \otimes X}
\xrightarrow{\overline{e_{\overline{Y}}}^{-1}}
\overline {X \otimes \overline{Y}}
\xrightarrow{\nu_{\overline Y,X}^{-1}}
\overline{\overline{Y}}\otimes \overline X
\xrightarrow{\varphi_Y^{-1}\otimes \id}
Y\otimes \overline X.
\]
The Drinfel'd center $Z(T)$ of a bi-involutive tensor category $T$ is the commutant of $T$ inside itself.

\begin{remark}
The categories $Z_B(T)$ and $\dot Z_B(T)$ need not, in general, be equivalent.
However, in the cases studied in this paper, they will turn out %to be
equivalent (see Remark~\ref{rem: Z versus dot Z}).
\end{remark}

%A \emph{bimodule} between von Neumann algebras $A$ and $B$ is a Hilbert space $H$ equipped with commuting actions $A\to \B(H)$ and $B^\op\to \B(H)$ (all actions are assumed to be continuous for the ultraweak topology).
%The collection of all von Neumann algebras and all their bimodules forms a bicategory \cite{MR1867561},
%and $\Bim(R)$ is the tensor category of endomorphisms of the object $R$.
%a certain $A$-$A$-bimodule denoted $L^2(A)$ and called the \emph{standard form}, or non-commutative $L^2$-space of $A$
%\cite{MR0407615, MR1203761}.

\subsection{Bim(\emph{R})}\label{sec: Bim(R)}

Let $R$ be a hyperfinite factor, %which is not of type $\mathrm{I}$,
and let $\Bim(R)$ be the category of all $R$-$R$-bimodules whose underlying Hilbert spaces is separable.
It is a dagger category by means of the operation that sends a bimodule map to its adjoint.

Let $L^2(R)$ be the \emph{non-commutative $L^2$-space} of $R$ \cite{MR0407615, MR1203761} (for any faithful state $\phi:R\to\C$, there is a canonical identification between $L^2R$ and the GNS Hilbert space associated to $\phi$ \cite[Def IX.1.18]{MR1943006}).
By Tomita-Takesaki theory, this Hilbert space is equipped with two actions of $R$ that are each other's commutants, and an antilinear involution $J$ that satisfies $J(x\xi y)=y^*J(\xi)x^*$.

The tensor structure
\begin{equation}\label{eq: CF}
\boxtimes_R:\,\Bim(R)\times \Bim(R)\to \Bim(R).
\end{equation}
on $\Bim(R)$ is known as \emph{Connes fusion}, or relative tensor product. % \cite{MR1303779, MR703809}.
The bimodule $L^2(R)$ is the unit object for that operation.

Given two bimodules $H$ and $K$, their fusion $H\boxtimes_RK$ is the completion of
$\Hom_{\mathrm{Mod}\text{-}R}($ $L^2R,H)\otimes_R K$
with respect to the inner product
$\langle\varphi\otimes \xi,\psi\otimes \eta\rangle:=\langle \lambda^{-1}(\psi^*\circ \varphi)(\xi),\eta\rangle$, where 
$\lambda:R\to \mathrm{End}_{\mathrm{Mod}\text{-}R}(L^2R)$ denotes the left action of $R$ on its $L^2$-space.
The left action of $R$ on $H$ and the right action of $R$ on $K$ equip $H\boxtimes_RK$, once again, with the structure of a bimodule.
The Connes fusion can be equivalently described as a completion of 
$H \otimes_R \Hom_{R\text{-}\mathrm{Mod}}(L^2R,K)$,
or
$\Hom_{\mathrm{Mod}\text{-}R}(L^2R,H)\otimes_R L^2R \otimes_R \Hom_{R\text{-}\mathrm{Mod}}(L^2R,K)$.
Basic references include \cite{BDH(Dualizability+Index-of-subfactors)}\;\!\cite[V.B.$\delta$]{MR1303779}\;\!\cite{MR703809}\;\!\cite{MR2771095}.

\begin{remark}\label{rm: HK = KH}
Using the last description of the fusion, and the fact that $L^2(R) \cong L^2(R^\op)$, we see that there is a canonical isomorphism $H\boxtimes_R K \cong K\boxtimes_{R^\op} H$.
\end{remark}

Given $H\in\Bim(R)$, its complex conjugate $\overline H$ is a bimodule by means of the actions $a\bar\xi b:=\overline{b^*\xi a^*}$.
We call it the \emph{conjugate} bimodule.
This operation comes with canonical isomorphisms
\[
\nu:\overline H\boxtimes_R \overline K\to \overline {K\boxtimes_R H}\qquad\text{and}\qquad j: L^2(R)\to \overline{L^2(R)}
\]
reviewed in \cite[\S2.4]{Bicommutant-categories-from-fusion-categories}.

All together, these operations endow $\Bim(R)$ with the structure of a bi-involutive tensor category.
By definition, a \emph{bicommutant category} is a bi-involutive tensor category $T$ for which there exists a hyperfinite factor $R$ and
a bi-involutive functor $T\to \Bim(R)$ such that the inclusion functor $T\to T'' = Z_{\Bim(R)}(Z_{\Bim(R)}(T))$ is an equivalence of categories.

\section{Conformal nets}

In this section, we recall the definition of conformal net from \cite{BDH-cn1}, along with the notion of representation of a conformal net,
the fusion product
\[
\boxtimes:\Rep(\cA)\times \Rep(\cA)\to \Rep(\cA),
\]
and the braiding of representations $\beta_{H,K}:H\boxtimes K\to K\boxtimes H$.

\subsection{Coordinate free conformal nets}\label{sec: co free nets}

Let us define an \emph{interval} to be an oriented manifold diffeomorphic to $[0,1]$.
We write $\mathrm{Diff}_+(I)$ for the group of orientation preserving diffeomorphisms of an interval $I$.
Let $\mathsf{INT}$ be the category whose objects are intervals and whose morphisms are embeddings, not necessarily orientation preserving,
and let $\mathsf{VN}$ be the category whose objects are von Neumann algebras and whose morphisms are normal maps which are either $*$-algebra homomorphisms or $*$-algebra anti-homomorphisms.

\begin{definition}[{\cite[Def.\,1.1]{BDH-cn1}}] \label{Def:CN}\rm
A conformal net is a covariant functor $\cA \colon \mathsf{INT} \to \mathsf{VN}$
from the category of oriented intervals and embeddings to the category of von Neumann algebras.
It sends 
orientation-preserving embeddings to injective homomorphisms and 
orientation-reversing embeddings to injective antihomomorphisms.
Moreover, for any intervals $I$ and $J$, 
the natural map $\Hom_{\mathsf{INT}}(I,J)\to \Hom_{\mathsf{VN}}(\cA(I),\cA(J))$ should be continuous 
for the $\mathcal C^\infty$ topology on $\Hom_{\mathsf{INT}}(I,J)$ and 
Haagerup's $u$-topology\footnote{Topology of pointwise convergence on the preduals.} on $\Hom_{\mathsf{VN}}(\cA(I),\cA(J))$.
In addition to that, a conformal net should satisfy the following five axioms:
\begin{enumerate}[\it i.]
\item \emph{Locality:} If $I,J\subset K$ have disjoint interiors, then $\cA(I)$ and $\cA(J)$ are commuting subalgebras of $\cA(K)$.
\item \emph{Strong additivity:} If $K = I \cup J$, then $\cA(K)$ is generated as a von Neumann algebra by its two subalgebras: $\cA(K) = \cA(I) \vee \cA(J)$.
\item \emph{Split property:} If $I,J\subset K$ are disjoint, then the natural map from the algebraic tensor product $\cA(I) \otimes_{\mathrm{alg}} \cA(J) \to \cA(K)$ extends to a map from the spatial tensor product $\cA(I) \, \bar{\otimes} \, \cA(J) \to \cA(K)$.
\item \emph{Inner covariance:} If $\varphi\in\mathrm{Diff}_+(I)$ restricts to the identity in a neighbourhood of the boundary of $I$, then $\cA(\varphi):\cA(I)\to \cA(I)$ is an inner automorphism. 
\item \emph{Vacuum sector:} 
Let $J \subsetneq I$ contain the boundary point $p \in \partial I$, and let $\bar{J}$ denote $J$ with reversed orientation; $\cA(J)$ acts on $L^2(\cA(I))$ via the left action of $\cA(I)$, and $\cA(\bar{J}) \cong \cA(J)^\op$ acts on $L^2(\cA(I))$ via the right action of $\cA(I)$.
In that case, we require that the action of $\cA(J) \otimes_{\mathrm{alg}} \cA( \bar{J} )$ on $L^2(\cA(I))$ extends to an action of $\cA(J \cup_p \bar{J})$\footnote{Here, $J \cup_p \bar{J}$ is equipped with any smooth structure extending the given smooth structures on $J$ and $\bar J$, and for which the orientation-reversing involution that exchanges $J$ and $\bar{J}$ is smooth.}:
\begin{equation}\label{eq: Vacuum sector axiom for nets}
\qquad\tikzmath{
\node (a) at (0,1.2) {$\cA(J) \otimes_{\mathrm{alg}} \cA( \bar{J} )$};
\node (b) at (3.5,1.2) {$\B(L^2\cA(I))$}; 
\node (c) at (0,0) {$\cA(J \cup_p \bar{J})$}; 
\draw[->] (a) -- (b);
\draw[->] (a) -- (c);
\draw[->,dashed] (c) -- (b);
}
\end{equation}
\end{enumerate}
\end{definition}

\noindent
A conformal net $\cA$ is called \emph{irreducible} if the algebras $\cA(I)$ are factors.
We will always assume that our conformal nets are irreducible.

\begin{remark}\label{rm: or rev}
For any interval $I$, the identity map $\bar I\to I$ (which is orientation reversing) induces an isomorphism $\cA(\bar I)\cong\cA(I)^\op$.
This was used above, in the formulation of the vacuum sector axiom.
\end{remark}

Let $S^1:=\{z\in \C:|z|=1\}$.
A representation of $\cA$ consists of a Hilbert space $H$ (always assumed separable) and a collection of homomorphisms
$\rho_I \colon \cA(I) \to \B(H)$
for every interval $I\subset S^1$,
subject to the compatibility condition $\rho_I|_{\cA(J)}=\rho_J$ whenever $J \subset I$.

The vacuum representation, or vacuum sector, is a representation of $\cA$ on the Hilbert space
\begin{equation*} %\label{eq: vac sec}
H_0=H_0^\cA:=L^2(\cA(\tikz[scale=.5, rotate=180]{\useasboundingbox (-.5,-.2) rectangle (.5,.2);\draw (-.5,0) arc(180:0:.5);\draw[->] (-.05,.5) -- +(-.01,0);})).
\end{equation*}
The algebra 
$\cA(\tikz[scale=.5, rotate=180]{\useasboundingbox (-.5,-.2) rectangle (.5,.2);\draw (-.5,0) arc(180:0:.5);\draw[->] (-.05,.5) -- +(-.01,0);})$ acts via the usual left multiplication on its $L^2$-space.
The algebra
$\cA(\tikz[scale=.5, rotate=180]{\useasboundingbox (-.5,-.2) rectangle (.5,.2);\draw (-.5,0) arc(-180:0:.5);\draw[->] (.05,-.5) -- +(.01,0);})$
acts via the isomorphism 
$\cA(b):\cA(\tikz[scale=.5, rotate=180]{\useasboundingbox (-.5,-.2) rectangle (.5,.2);\draw (-.5,0) arc(-180:0:.5);\draw[->] (.05,-.5) -- +(.01,0);}) \to
\cA(\tikz[scale=.5, rotate=180]{\useasboundingbox (-.5,-.2) rectangle (.5,.2);\draw (-.5,0) arc(180:0:.5);\draw[->] (-.05,.5) -- +(-.01,0);})^\op$, where $b(z)=\bar z$,
followed by right multiplication of
$\cA(\tikz[scale=.5, rotate=180]{\useasboundingbox (-.5,-.2) rectangle (.5,.2);\draw (-.5,0) arc(180:0:.5);\draw[->] (-.05,.5) -- +(-.01,0);})$ on its $L^2$-space.
The vacuum sector axiom then ensures that those two actions uniquely extend to actions of $\cA(I)$ for every $I\subset S^1$~\cite[\S1.\sc b]{BDH-cn1}.

Given an interval $I\subset S^1$,
let $I':=S^1{\setminus} \mathring I$ denote the complement of the interior of $I$.
The representation $H_0$ satisfies the important property of Haag duality \cite[Prop~1.18]{BDH-cn1}:
\begin{equation*} %\label{eq:Haag}
\rho_{I'}(\cA(I'))=\rho_I(\cA(I))'.
\end{equation*}
We note that Definition \ref{Def:CN} is rigged in such a way so as to have Haag duality essentially built into it.
Using the classical definition of conformal nets, Haag duality is an important theorem
\cite[\S II.2]{MR1231644}\cite[Thm~6.2.3]{Longo(Lectures-on-Nets-II)}\cite{MR1079298}.

Recall that $PSU(1,1)$ is the group of M\"obius transformations, acting on $S^1$ by $(\begin{smallmatrix}a&b\\\bar b&\bar a\end{smallmatrix}):z\mapsto \frac{a z+b}{\bar b z+\bar a}$ for $|a|^2-|b|^2=1$.
Then the vacuum sector admits a continuous representation
\begin{equation}\label{eq: Su11 action}
\begin{split}
PSU(1,1)&\to U(H_0)\\
\varphi\,&\mapsto\, u_\varphi
\end{split}
\end{equation}
which satisfies the covariance property $\cA(\varphi)(a)=u_\varphi a u_\varphi^*$ for every $I\subset S^1$  and $a\in \cA(I)$ \cite[Thm 2.13]{BDH-cn1}.
%(Note that in the classical definition of a conformal nets \cite{MR1231644, Longo(Lectures-on-Nets-II)}, the action \eqref{eq: Su11 action} is simply part of the data.)
Let $r_t=(\!\!\begin{smallmatrix} e^{it\hspace{-.2mm}/\hspace{-.1mm}2}\,\,\,0\,\,\, \\ \,\,\,\,0\,\,\,\, e^{-it\hspace{-.2mm}/\hspace{-.1mm}2}\end{smallmatrix}\!)\in PSU(1,1)$ denote rotation by $t$, and let $R_t=u_{r_t}$ be its image under the above homomorphism.
The unbounded self-adjoint operator $L_0:=-i\tfrac{d}{dt}\big|_{t=0}\,R_t$ is called the \emph{energy operator};
it generates the subgroup of rotations in the sense that~$R_t=e^{tiL_0}$.
We call a conformal net \emph{chiral} if the energy operator $L_0$ has positive spectrum
and the $PSU(1,1)$-invariant subspace $H_0$ is one dimensional (equivalently, the subspace invariant under all the $R_t$'s is one dimensional).

\begin{remark}
Our definition of conformal net (Definition~\ref{Def:CN}) is different from the one usually encountered in the literature.
If a conformal net in the sense of \cite{MR1231644, Longo(Lectures-on-Nets-II)} satisfies the additional assumptions
of strong additivity and diffeomorphism covariance\footnote{The split property was recently shown to be a consequence of diffeomorphism covariance
\cite{arXiv:1609.02169}.}, then it induces a conformal net in the sense of Definition~\ref{Def:CN} \cite[Prop~4.9]{BDH-cn1}.
Conversely, a conformal net (in the sense of Definition~\ref{Def:CN}) which is chiral induces a conformal net in the classical sense by restricting it to the circle.
This establishes a bijective correspondence between chiral conformal nets in the sense described above, and 
conformal nets in the sense of \cite{MR1231644, Longo(Lectures-on-Nets-II)} subject to the additional assumptions
of strong additivity and diffeomorphism covariance.
\end{remark}

\subsection{Fusion of representations}\label{sec: Reps of nets}

The standard monoidal structure %$\boxtimes:\,\Rep(\cA)\times \Rep(\cA)\to \Rep(\cA)$
\begin{equation} \label{eq: fusion product}
\boxtimes:\,\Rep(\cA)\times \Rep(\cA)\to \Rep(\cA)
\end{equation}
on the category of representations of a conformal net is called \emph{fusion}.
There are two main approaches for defining that operation \cite[Sec. IV.2]{MR1231644} and \cite[Sec.~30]{MR1645078}
(see \cite[Prop V.B.$\delta$.17]{MR1303779} for the equivalence between the two).
We will follow the latter.

let $I_0$ be a fixed `standard interval' (in Section~\ref{sec: Representations and solitons}, this was taken to be the lower half of $S^1$, but $I_0=[0,1]$ is also a pleasant choice),
and let $R:=\cA(I_0)$ be the value of our conformal net on that standard interval.
A representation $H\in\Rep(\cA)$ has commuting left actions of the algebras
\begin{equation}\label{eq: identifications}
\tikzmath{
\node at (-.12,-.2) {$
\cA(\tikz[scale=.5]{\useasboundingbox (-.5,-.2) rectangle (.5,.2);\draw (-.5,0) arc(-180:0:.5);\draw[->] (.05,-.5) -- +(.01,0);})\cong R\,\,
\quad\text{and}\quad\,
\cA(\tikz[scale=.5]{\useasboundingbox (-.5,-.2) rectangle (.5,.2);\draw (-.5,0) arc(180:0:.5);\draw[->] (-.05,.5) -- +(-.01,0);})\cong R^\op$,};
\node[scale=.8] (X) at (-4.81,-1) {\parbox{3.8cm}{\small \centerline{constant-speed\,\,} \centerline{orientation preserving\,\:} parametrization by $I_0$}};
\node[scale=.8] (Y) at (4.66,-1) {\parbox{3.8cm}{\small \centerline{constant-speed\,\,} \centerline{orientation reversing\:\;} parametrization by $I_0$}};
\draw[->, rounded corners=4] (X.east) -- ++(1.5,0) -- +(0,.5);
\draw[->, rounded corners=4] (Y.west) -- ++(-1.21,0) -- +(0,.5);
}
\end{equation}
so we get commuting left and right actions of $R$, making $H$ into an $R$-$R$-bimodule.
The above construction gives a functor
$\Rep(\cA)\rightarrow \Bim(R)$.
We will see later, in Section~\ref{sec: solitons as bimodules},
that this functor is fully faithful (Lemma \ref{lem: T --> Bim(R) is fully faithful}), and that its image is closed under the operation \eqref{eq: CF} of Connes fusion (Lemma~\ref{lem: T_A and Rep(A) closed under fusion}).
This will allow us to define the fusion product on $\Rep(\cA)$ as the restriction of the corresponding operation on $\Bim(R)$.
%We will equip $\Rep(\cA)$ with a tensor structure by constructing a fully faithful functor
%\[
%\Rep(\cA)\rightarrow \Bim(R)
%\]
%into the category of all $R$-$R$-bimodules, and proving that its essential image is closed under Connes fusion.
%To construct the above functor, we note that a representation $H$ has commuting left actions of the algebras
%\begin{equation}\label{eq: identifications}
%\tikzmath{
%\node at (-.12,-.2) {$
%\cA(\tikz[scale=.5]{\useasboundingbox (-.5,-.2) rectangle (.5,.2);\draw (-.5,0) arc(-180:0:.5);\draw[->] (.05,-.5) -- +(.01,0);})\cong R\,\,
%\quad\text{and}\quad\,
%\cA(\tikz[scale=.5]{\useasboundingbox (-.5,-.2) rectangle (.5,.2);\draw (-.5,0) arc(180:0:.5);\draw[->] (-.05,.5) -- +(-.01,0);})\cong R^\op$,};
%\node[scale=.8] (X) at (-4.81,-1) {\parbox{3.8cm}{\small \centerline{constant-speed\,\,} \centerline{orientation preserving\,\:} parametrization by $I_0$}};
%\node[scale=.8] (Y) at (4.66,-1) {\parbox{3.8cm}{\small \centerline{constant-speed\,\,} \centerline{orientation reversing\:\;} parametrization by $I_0$}};
%\draw[->, rounded corners=4] (X.east) -- ++(1.5,0) -- +(0,.5);
%\draw[->, rounded corners=4] (Y.west) -- ++(-1.21,0) -- +(0,.5);
%}
%\end{equation}
%and so we get commuting left and right actions of $R$, making $H$ into an $R$-$R$-bimodule.
%We will equip $\Rep(\cA)$ with a tensor structure by constructing a fully faithful functor

\begin{remark}
In our situation of interest, there is an alternative way of defining Connes fusion that avoids $L^2$-spaces and Tomita-Takesaki theory,
and avoids the parametrizations \eqref{eq: identifications}.
It is defined directly as an operation on the category of
$\cA(\tikz[scale=.5]{\useasboundingbox (-.5,-.2) rectangle (.5,.2);\draw (-.5,0) arc(-180:0:.5);\draw[->] (.05,-.5) -- +(.01,0);})$%
-%
$\cA(\tikz[scale=.5]{\useasboundingbox (-.5,-.2) rectangle (.5,.2);\draw (-.5,0) arc(180:0:.5);\draw[->] (-.05,.5) -- +(-.01,0);})^\op$%
-bimodules,
equivalently,
$\cA(\tikz[scale=.5]{\useasboundingbox (-.5,-.2) rectangle (.5,.2);\draw (-.5,0) arc(-180:0:.5);\draw[->] (.05,-.5) -- +(.01,0);})$%
-%
$\cA(\tikz[scale=.5]{\useasboundingbox (-.5,-.2) rectangle (.5,.2);\draw (-.5,0) arc(180:0:.5);\draw[->] (.05,.5) -- +(.01,0);})$%
-bimodules.
All that we use is the fact that there is a distinguished bimodule $H_0$ with the property that the actions of 
$A:=\cA(\tikz[scale=.5]{\useasboundingbox (-.5,-.2) rectangle (.5,.2);\draw (-.5,0) arc(-180:0:.5);\draw[->] (.05,-.5) -- +(.01,0);})$
and of 
$B:=\cA(\tikz[scale=.5]{\useasboundingbox (-.5,-.2) rectangle (.5,.2);\draw (-.5,0) arc(180:0:.5);\draw[->] (-.05,.5) -- +(-.01,0);})$
are each other's commutants (Haag duality).
The fusion of $H$ and $K$ is the completion of
\[
\Hom_B(H_0,H)\otimes_A K
\]
under the inner product 
$\langle\varphi\otimes \xi,\psi\otimes \eta\rangle:=\langle(\psi^*\varphi)\xi,\eta\rangle$, where $\psi^*\varphi\in\Hom_B(H_0,H_0)$ is identified with an element of $A$.
Equivalently, the fusion is the completion of
$H \otimes_B \Hom_A(H_0,K)$,
or the completion of 
$\Hom_B(H_0,H)\otimes_A H_0 \otimes_B \Hom_A(H_0,K)$.
\end{remark}

There is also a `coordinate free' version of the operation of fusion, that goes as follows.
Recall that a representation of a conformal net is a Hilbert space equipped with
compatible actions of the algebras $\cA(I)$ for all the subintervals of the standard circle.
More generally, for any circle $S$ (a \emph{circle} is an oriented $1$-manifold diffeomorphic to $S^1$) there is a notion of $S$-sector of $\cA$ 
that generalises that of a representation
\cite[Def 1.7]{BDH-cn1}:
\begin{definition}
Let $\cA$ be a conformal net.
An $S$-sector of $\cA$ is a Hilbert space $H$ and a collection of homomorphisms
\[
\rho_I \colon \cA(I) \to \B(H),\quad I\subsetneq S
\]
subject to the compatibility condition $\rho_I|_{\cA(J)}=\rho_J$ whenever $J \subset I$.
We write $\mathrm{Sect}_{S}(\cA)$ for the category of $S$-sectors of $\cA$.
\end{definition}

The category $\mathrm{Sect}_{S}(\cA)$ contains a distinguished object $H_0(S,\cA)$, well defined up to non-canonical isomorphism, called the \emph{vacuum sector}.\footnote{Since $H_0(S,\cA)$ is only well defined up to non-canonical isomorphism, it would be more correct to say \emph{a} vacuum sector as opposed to \emph{the} vacuum sector.}
By definition \cite[Def~1.17]{BDH-cn1},
for every interval $I\subset S$ and every orientation reversing involution $j:S\to S$ that fixes $\partial I$,
the vacuum sector $H_0(S,\cA)$ is isomorphic to $L^2(\cA(I))$ via an isomorphism
\begin{equation}\label{eq:   v:H_0(S,A) --> L^2(A(I))}
v:H_0(S,\cA)\to L^2(\cA(I)),
\end{equation}
well defined up to phase,
that intertwines the two left actions of $\cA(I)$ and satisfies
$v(\cA(j)(x)\xi)=(v(\xi))x$
for all $x\in \cA(I)$ and $\xi\in L^2(\cA(I))$.

Let $\Theta$ be any theta-graph, and let $S_1$, $S_2$, $S_3$ be its three circle subgraphs, oriented as follows:
\begin{equation*} %\label{eq: Theta-graph}
\Theta:\tikzmath[scale=.8]{\useasboundingbox (-1.2,-1) rectangle (1.8,1);\draw circle (1) (-1,0) -- (1,0);}
S_1:\tikzmath[scale=.6]{\useasboundingbox (-1.2,-1) rectangle (2,1);\draw[line width=.7] (-1,0) arc (-180:0:1) (-1,0) -- (1,0);\draw[densely dotted](-1,0) arc (180:0:1);
\draw[->] (-90:1) ++ (0,0) -- +(.1,0);}
S_2:\tikzmath[scale=.6]{\useasboundingbox (-1.2,-1) rectangle (2,1);\draw[line width=.7] (-1,0) arc (180:0:1) (-1,0) -- (1,0);\draw[densely dotted](-1,0) arc (-180:0:1);
\draw[->] (0,0) -- +(.1,0);}
S_3:\tikzmath[scale=.6]{\useasboundingbox (-1.2,-1) rectangle (1.2,1);\draw[line width=.7] circle (1);\draw[densely dotted] (-1,0) -- (1,0);
\draw[->] (-90:1) ++ (0,0) -- +(.1,0);}
\end{equation*}
The circles $S_1$, $S_2$, $S_3$ are equipped with smooth structures which are compatible in the following sense:
around each of the trivalent vertices of $\Theta$, it is possible to pick a neighbourhood $Y\subset \Theta$, ($Y\cong\tikzmath[scale=.3]{\useasboundingbox (-1,-.8) -- (1,.8);\draw (0,0) -- (-30:1)(0,0) -- (90:.95)(0,0) -- (210:1);}$)
and local coordinates $\big\{f_i:[0,\varepsilon[\,\,\to Y
\big\}_{i=1,2,3}$ of the three legs of $Y$
so that for each pair $i,j \in \{1,2,3\}$ of distinct indices, the map $(-f_i)\cup f_j:\,\,]\!-\!\varepsilon,\varepsilon[\,\,\to Y$
is smooth when viewed as a map to the relevant circle.
Let $I:=S_1\cap S_2$ be the central interval, equipped with the orientation inherited from $I_2$.
Then Connes fusion along $\cA(I)$ defines a functor \cite[Def~1.31]{BDH-cn1}:
\begin{equation*} %\label{eq: theta fusion}
\boxtimes_{\cA(I)}:\mathrm{Sect}_{S_1}(\cA)\times \mathrm{Sect}_{S_2}(\cA)\to \mathrm{Sect}_{S_3}(\cA).
\end{equation*}
We denote the fusion of representations graphically by:
\begin{equation}\label{eq:3circs}
H\boxtimes K \,=\,
H\boxtimes_{\cA(\tikz[scale=.3]{\useasboundingbox (-.5,-.18) rectangle (.5,.2);
\draw (-.5,0) -- (.5,0);\draw[->] (.15,0) -- +(.01,0);
})}K\,=\,
\tikz[scale=.6]{
\useasboundingbox (-1.2,-.1) rectangle (1.2,.8);
\draw circle (1) (-1,0) -- (1,0);
\node at (0,-.45) {$H$};
\node at (0,.45) {$K$};
} 
\bigskip
\end{equation}

\subsection{The braiding}\label{sec: the braiding}

In the literature on algebraic quantum field theory, the braiding on $\Rep(\cA)$ is usually defined as follows \cite[\S2]{MR1016869}\cite[\S7]{MR1027496}\cite[Def~4.16]{MR1231644}.
First of all, the objects of $\Rep(\cA)$ are represented by \emph{localised endomorphisms} of the $*$-algebra
%\footnote{It is customary to take the colimit in C*-algebras; we prefer to take the colimit in $*$-algebras.}
\[
\mathfrak A:=\underset{I\subset \R}{\mathrm{colim}}\,\cA(I).
\]
Here, a $*$-algebra endomorphism $\rho:\mathfrak A\to \mathfrak A$ is said to be \emph{localised} in a bounded region $O\subset \R$ if it acts as the identity on the subalgebras $\cA(I)\subset \mathfrak A$ for every $I$ disjoint from~$O$.
(We refer the reader to \cite[\S IV]{MR1231644} for an explanation of the bijective correspondence between non-zero representations of $\cA$ and localised endomorphisms of $\mathfrak A$.) % of a chiral conformal net.)

Given localised endomorphisms $\rho_1$ and $\rho_2$, one picks unitaries $U_1,U_2\in\mathfrak A$ such that 
$\hat\rho_1:=\mathrm{Ad}(U_1)\rho_1$ is localised in a region which is to the right of
the region where $\hat\rho_2:=\mathrm{Ad}(U_2)\rho_2$ is localised.
The element
\begin{equation*} %\label{eq: def of varepsilon}
\varepsilon := \rho_2(U_1)^{-1}U_2^{-1}U_1\rho_1(U_2) %= U_2^{-1} \hat\rho_2(U_1)^{-1}\hat\rho_1(U_2)U_1
\end{equation*}
is then an intertwiner from $\rho_1\rho_2$ to $\rho_2\rho_1$ (it satisfies $\varepsilon \rho_1\rho_2(x)=\rho_2\rho_1(x) \varepsilon$),
which is called the \emph{braiding} of $\rho_1$ and $\rho_2$.
It is independent of the choice of unitaries $U_1$ and $U_2$,
provided $\mathrm{Ad}(U_1)\rho_1$ is localised to the right of the localisation region of $\mathrm{Ad}(U_2)\rho_2$.
Here, the intertwining property is best explained by noting that $\varepsilon$ is a composite of intertwiners
\[
\rho_1\rho_2
\xrightarrow{\rho_1(U_2)}
\rho_1\hat\rho_2
\xrightarrow{U_1}
\hat\rho_1\hat\rho_2=\hat\rho_2\hat\rho_1
\xrightarrow{U_2^{-1}}
\rho_2\hat\rho_1
\xrightarrow{\rho_2(U_1)^{-1}}
\rho_2\rho_1.
\]
%\[
%\rho_1\rho_2
%\xrightarrow{U_1}
%\hat\rho_1\rho_2
%\xrightarrow{\hat\rho_1(U_2)}
%\hat\rho_1\hat\rho_2=\hat\rho_2\hat\rho_1
%\xrightarrow{\hat\rho_2(U_1)^{-1}}
%\hat\rho_2\rho_1
%\xrightarrow{U_2^{-1}}
%\rho_2\rho_1.
%\]

We now adapt the above definition\footnote{Note that we also have $\varepsilon = U_2^{-1} \hat\rho_2(U_1)^{-1}\hat\rho_1(U_2)U_1$. It is that second formula which most closely resembles our working definition \eqref{eq: the braiding} of the braiding.} of the braiding to the case of the fusion product \eqref{eq:3circs}.
Given a representation $H$ and an element $x\in \cA(I)$ for some interval $I\subset S^1$, we write $\rho^H(x)$ for the action of $x$ on $H$.
Let $I_n:=\{e^{2\pi i \theta}:\theta\in[\tfrac{n-1}{4},\tfrac{n}{4}]\}$ for $n=1,2,3,4$:
\begin{equation}\label{eq: pic: I_1, I_2, I_3, I_4}
\tikzmath{
\draw circle (1);
\fill (0:1) circle (.03)  (90:1) circle (.03)  (180:1) circle (.03)  (270:1) circle (.03);
\node at (1,1) {$\scriptstyle I_1$};
\node at (-1,1) {$\scriptstyle I_2$};
\node at (-1,-1) {$\scriptstyle I_3$};
\node at (1,-1) {$\scriptstyle I_4$};
\draw[->] (45:1) -- +(-.01,.012);
\pgftransformrotate{90}
\draw[->] (45:1) -- +(-.01,.012);
\pgftransformrotate{90}
\draw[->] (45:1) -- +(-.01,.012);
\pgftransformrotate{90}
\draw[->] (45:1) -- +(-.01,.012);}
\end{equation}
Let us adopt the notations $I_{123}:=I_1\cup I_2\cup I_3$, $I_{234}:=I_2\cup I_3\cup I_4$, $I_{34}=I_3\cup I_4$, etc.
By definition, the fusion of representations is given by
$H\boxtimes K=H\boxtimes_{\cA(I_{34})} K$.

Let $\varphi_1:I_{341}\to I_{341}$ be a diffeomorphism that sends $1$ to $-i$ and is the identity near the boundary of that interval.
Given two representations $H,K\in\Rep(\cA)$, we define
\begin{equation}\label{eq:   H box^phi K}
H\boxtimes^{\varphi_1} K:=H\,\boxtimes_{\cA(I_{34})} ({}^{\varphi_1\!}K),
\end{equation}
where ${}^{\varphi_1\!}K$ is the Hilbert space $K$ with action of $\cA(I_{34})$ twisted by $\cA(\varphi_1):\cA(I_{34})\to \cA(I_3)$.
We equip $H\boxtimes^{\varphi_1} K$ with the following actions of $\cA(I_{412})$ and of $\cA(I_3)$.
The algebra $\cA(I_{412})$ acts on $H\boxtimes^{\varphi_1} K$ by means of its usual action on $K$.
The algebra $\cA(I_3)$ acts on $H\boxtimes^{\varphi_1} K$ by first applying $\cA(\varphi_1)^{-1}:\cA(I_3)\to \cA(I_{34})$ and then using the action of $\cA(I_{34})$ on $H$.
We will see later that those actions extend, by strong additivity, to the structure of a representation on $H\boxtimes^{\varphi_1} K$.
%If on the other hand $I\subset I_{3}$, then we apply $\cA(\varphi_1)^{-1}:\cA(I)\to \cA(I_{34})$ and use the action of $\cA(I_{34})$ on $K$.
%For intervals which are contained in neither $I_{412}$ nor $I_3$, we use the strong additivity axiom to piece together the actions of $\cA(I\cap I_{412})$ and $\cA(I\cap I_3)$ into an acton of $\cA(I)$ on $H\boxtimes^{\varphi_1} K$.
%The fact that this is possible is a consequence of Lemma~\ref{lem:  U_1^(H,K) } below.

Pick a unitary $u_1\in\cA(I_{341})$ such that $\mathrm{Ad}(u_1)=\cA(\varphi_1)$ (Definition~\ref{Def:CN}.{\it iv}).

\begin{lemma}\label{lem:  U_1^(H,K) }
The isomorphism
\begin{equation}\label{eq: def: U_1}
U_1=U_1^{(H,K)}:=\big(\id_{H}\boxtimes \rho^K(u_1)\big)\circ\rho^{H\boxtimes K}(u_1)^{-1}:H\boxtimes K\to H\boxtimes^{\varphi_1} K
\end{equation}
intertwines the actions of $\cA(I_3)$ and of $\cA(I_{412})$.
%is a morphism of representations.
\end{lemma}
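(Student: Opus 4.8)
The plan is to verify the two asserted intertwining relations directly, reducing everything to the single relation $\mathrm{Ad}(u_1)=\cA(\varphi_1)$ on $\cA(I_{341})$ (which holds by Definition~\ref{Def:CN}.\textit{iv}), together with locality and strong additivity. Write $\rho:=\rho^{H\boxtimes K}$ and $\sigma:=\rho^{H\boxtimes^{\varphi_1}K}$. First I would record that $U_1$ is a well-defined unitary: for $a\in\cA(I_{34})$ one has $\rho^K(u_1)\,\rho^K(a)=\rho^K(\cA(\varphi_1)(a))\,\rho^K(u_1)$, and since $a\mapsto\rho^K(\cA(\varphi_1)(a))$ is \emph{by definition} the left $\cA(I_{34})$-action on ${}^{\varphi_1\!}K$, the unitary $\rho^K(u_1)$ is a map of left $\cA(I_{34})$-modules $K\to{}^{\varphi_1\!}K$. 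Hence $\id_H\boxtimes\rho^K(u_1)$ descends to a unitary $H\boxtimes K\to H\boxtimes^{\varphi_1}K$, and $U_1$ is its composite with the unitary $\rho(u_1)^{-1}$. The whole computation then rests on two conjugation identities, each a restatement of $\mathrm{Ad}(u_1)=\cA(\varphi_1)$: for $b\in\cA(I_{341})$,
\[
\rho(u_1)^{-1}\rho(b)\rho(u_1)=\rho\big(\cA(\varphi_1)^{-1}(b)\big),
\]
and, for $c\in\cA(I_{341})$,
\[
\big(\id\boxtimes\rho^K(u_1)\big)\big(\id\boxtimes\rho^K(c)\big)\big(\id\boxtimes\rho^K(u_1)\big)^{-1}=\id\boxtimes\rho^K\big(\cA(\varphi_1)(c)\big).
\]
Here $\cA(\varphi_1)^{-1}=\cA(\varphi_1^{-1})$ sends $\cA(J)$ to $\cA(\varphi_1^{-1}(J))$. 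I will also use that on $H\boxtimes K$ the algebra $\cA(I_{34})$ acts through the $H$-factor, $\rho(a)=\rho^H(a)\boxtimes\id$, while $\cA(I_{12})$ acts through the $K$-factor, $\rho(b)=\id\boxtimes\rho^K(b)$, as dictated by the definition of fusion.

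For the $\cA(I_3)$-action: given $b\in\cA(I_3)\subset\cA(I_{341})$, the first identity gives $\rho(u_1)^{-1}\rho(b)\rho(u_1)=\rho(\cA(\varphi_1)^{-1}(b))$ with $\cA(\varphi_1)^{-1}(b)\in\cA(\varphi_1^{-1}(I_3))=\cA(I_{34})$, which acts through $H$. Since $\id_H\boxtimes\rho^K(u_1)$ commutes with every operator of the form $\rho^H(\cdot)\boxtimes\id$, conjugation by it is trivial, so $U_1\rho(b)U_1^{-1}=\rho^H(\cA(\varphi_1)^{-1}(b))\boxtimes\id$, which is exactly the $\cA(I_3)$-action $\sigma(b)$ prescribed on $H\boxtimes^{\varphi_1}K$.

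For the $\cA(I_{412})$-action: by strong additivity (Definition~\ref{Def:CN}.\textit{ii}) we have $\cA(I_{412})=\cA(I_{41})\vee\cA(I_2)$, so it suffices to treat $b\in\cA(I_{41})$ and $b\in\cA(I_2)$. For $b\in\cA(I_{41})\subset\cA(I_{341})$, the first identity gives $\cA(\varphi_1)^{-1}(b)\in\cA(\varphi_1^{-1}(I_{41}))=\cA(I_1)\subset\cA(I_{12})$, which acts through $K$ as $\id\boxtimes\rho^K(\cA(\varphi_1)^{-1}(b))$; the second identity then undoes the twist, yielding $\id\boxtimes\rho^K(\cA(\varphi_1)\cA(\varphi_1)^{-1}(b))=\id\boxtimes\rho^K(b)=\sigma(b)$. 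For $b\in\cA(I_2)$, the interval $I_2$ is disjoint from $I_{341}$, so by locality $u_1$ commutes with $\rho(b)$ and with $\rho^K(b)$; both conjugations act trivially and $U_1\rho(b)U_1^{-1}=\id\boxtimes\rho^K(b)=\sigma(b)$. Since $U_1$ is bounded and $\rho,\sigma$ are normal, agreement on these generating subalgebras extends to all of $\cA(I_{412})$.

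The main obstacle is not the algebra but the geometric bookkeeping that places each conjugated interval where claimed: the identities $\varphi_1^{-1}(I_3)=I_{34}$ and $\varphi_1^{-1}(I_{41})=I_1$ are precisely the statement that $\varphi_1$ carries $1\mapsto -i$ while fixing a neighbourhood of $\partial I_{341}$, and everything hinges on reading these off correctly. Parallel to this, one must confirm that the prescribed target actions genuinely descend to the relative tensor product $H\boxtimes^{\varphi_1}K$ — in particular that $\id\boxtimes\rho^K(b)$ is well defined for $b\in\cA(I_4)\subset\cA(I_{412})$ even though $I_4$ lies in the fusion interval; this holds because $\cA(I_4)$ commutes with $\cA(I_3)$, the support of the twisted left action on ${}^{\varphi_1\!}K$, so that the twist by $\varphi_1$ is exactly what frees $I_4$ to act through $K$ on the target.
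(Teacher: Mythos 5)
Your proof is correct and follows essentially the same route as the paper's: both reduce everything to the conjugation identity $\mathrm{Ad}(u_1)=\cA(\varphi_1)$, track where $\cA(\varphi_1)^{-1}$ sends $I_3$ and $I_{41}$, and use that $\cA(I_{34})$ acts through $H$ while $\cA(I_{12})$ acts through $K$. The only cosmetic difference is that you split $\cA(I_{412})$ into $\cA(I_{41})$ and $\cA(I_2)$ via strong additivity and locality, whereas the paper runs a single computation for all of $\cA(I_{412})$ at once (implicitly relying on the same decomposition); your extra checks of well-definedness of $U_1$ and of the target actions are points the paper leaves tacit.
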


\begin{proof}
%To simplify the notation, we 
We write $\varphi$, $u$, and $U$ in place of $\varphi_1$, $u_1$, and $U_1$.
%We need to show that $U\circ \rho^{H\boxtimes K}(x)=\rho^{H\boxtimes^\varphi K}(x)\circ U$ holds for every $x\in \cA(I)$ and every $I\subset S^1$.
%By strong additivity, it is enough to check this for elements $x\in \cA(I_3)$ and $x\in \cA(I_{412})$.
For $x\in \cA(I_3)$, we have:
\begin{align*}
U\circ\rho^{H\boxtimes K}(x)
&= \big(\id_{H}\boxtimes \rho^K(u)\big)\circ\rho^{H\boxtimes K}(u^{-1}x)\\
&= \big(\id_{H}\boxtimes \rho^K(u)\big)\circ\rho^{H\boxtimes K}\big(\cA(\varphi)^{-1}(x)u^{-1}\big)\\
&= \big(\id_{H}\boxtimes \rho^K(u)\big)\circ\rho^{H\boxtimes K}\big(\cA(\varphi)^{-1}(x)\big)\circ\rho^{H\boxtimes K}(u^{-1})\\
&= \big(\id_{H}\boxtimes \rho^K(u)\big)\circ\big(\rho^{H}(\cA(\varphi)^{-1}(x))\boxtimes\id_K\big)\circ\rho^{H\boxtimes K}(u^{-1})\\
&= \big(\rho^{H}(\cA(\varphi)^{-1}(x))\boxtimes\id_K\big)\circ\big(\id_{H}\boxtimes \rho^K(u)\big)\circ\rho^{H\boxtimes K}(u^{-1})\\
&=\rho^{H\boxtimes^\varphi K}(x)\circ U.
\end{align*}
For $x\in \cA(I_{412})$, we have:
\begin{equation}\label{eq: long computation 2}\vspace{-.8cm}
\begin{split}
U\circ\rho^{H\boxtimes K}(x)
&= \big(\id_{H}\boxtimes \rho^K(u)\big)\circ\rho^{H\boxtimes K}(u^{-1}x)\\
&= \big(\id_{H}\boxtimes \rho^K(u)\big)\circ\rho^{H\boxtimes K}\big(\cA(\varphi)^{-1}(x)u^{-1}\big)\\
&= \big(\id_{H}\boxtimes \rho^K(u)\big)\circ\rho^{H\boxtimes K}\big(\cA(\varphi)^{-1}(x)\big)\circ\rho^{H\boxtimes K}(u^{-1})\\
&= \big(\id_{H}\boxtimes \rho^K(u)\big)\circ\big(\id_H\boxtimes\rho^{K}(\cA(\varphi)^{-1}(x))\big)\circ\rho^{H\boxtimes K}(u^{-1})\\
&= \big(\id_{H}\boxtimes \rho^K(u\cA(\varphi)^{-1}(x))\big)\circ\rho^{H\boxtimes K}(u^{-1})\\
&= \big(\id_{H}\boxtimes \rho^K(xu)\big)\circ\rho^{H\boxtimes K}(u^{-1})\\
&=\rho^{H\boxtimes^\varphi K}(x)\circ U.%\qedhere
\end{split}
\end{equation}
\end{proof}

\begin{corollary}
The actions of $\cA(I_3)$ and $\cA(I_{412})$ on $H\boxtimes^{\varphi_1} K$ endow it with the structure of an object of $\Rep(\cA)$.
The map \eqref{eq: def: U_1} is an isomorphism of representations.
\end{corollary}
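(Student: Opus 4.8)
The plan is to deduce both assertions at once by transporting the representation structure of $H\boxtimes K$ along the map $U_1$. The key point is that $H\boxtimes K$ is already an object of $\Rep(\cA)$, equipped with actions $\rho^{H\boxtimes K}_I\colon\cA(I)\to\B(H\boxtimes K)$ for every interval $I\subset S^1$; what is missing for $H\boxtimes^{\varphi_1} K$ is merely the extension of the two prescribed actions of $\cA(I_3)$ and $\cA(I_{412})$ to all of $S^1$, and $U_1$ will supply it.

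First I would verify that $U_1$ is a unitary isomorphism of Hilbert spaces. The factor $\rho^{H\boxtimes K}(u_1)$ is unitary, since $u_1$ is a unitary and $H\boxtimes K$ is a representation. For the factor $\id_H\boxtimes\rho^K(u_1)$, I would check that $\rho^K(u_1)\colon K\to{}^{\varphi_1}K$ intertwines the two $\cA(I_{34})$-actions along which the relative tensor products are formed: because $\cA(\varphi_1)=\mathrm{Ad}(u_1)$, an element $a\in\cA(I_{34})$ acts on ${}^{\varphi_1}K$ by $\rho^K(u_1au_1^{-1})$, so that $\rho^K(u_1)\rho^K(a)=\rho^K(u_1a)=\rho^K(u_1au_1^{-1})\rho^K(u_1)$ is exactly the required intertwining relation. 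Hence $\id_H\boxtimes\rho^K(u_1)\colon H\boxtimes_{\cA(I_{34})}K\to H\boxtimes_{\cA(I_{34})}{}^{\varphi_1}K=H\boxtimes^{\varphi_1}K$ is a well-defined unitary, and $U_1$ is unitary as a composite of two unitaries.

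With $U_1$ established as a unitary, I would define a representation structure on $H\boxtimes^{\varphi_1} K$ by transport of structure, setting $\rho_I(x):=U_1\,\rho^{H\boxtimes K}_I(x)\,U_1^{-1}$ for every interval $I\subset S^1$. Conjugation by a fixed unitary carries normal $*$-homomorphisms to normal $*$-homomorphisms and preserves the compatibility relations $\rho_I|_{\cA(J)}=\rho_J$, so this genuinely makes $H\boxtimes^{\varphi_1} K$ an object of $\Rep(\cA)$, and $U_1$ becomes an isomorphism of representations by construction. That the resulting structure restricts, on the two intervals $I_3$ and $I_{412}$, to the actions prescribed in the text is exactly the content of Lemma~\ref{lem:  U_1^(H,K) }, which gives $U_1\rho^{H\boxtimes K}(x)U_1^{-1}=\rho^{H\boxtimes^{\varphi_1}K}(x)$ for $x\in\cA(I_3)$ and $x\in\cA(I_{412})$. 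To justify that the two prescribed actions by themselves determine this representation, I would finally note that since $I_3$ and $I_{412}$ cover $S^1$ and meet only in their two endpoints, strong additivity (Definition~\ref{Def:CN}.{\it ii}) generates every $\cA(I)$ from $\cA(I\cap I_3)$ and $\cA(I\cap I_{412})$, so the extension is unique and coincides with the transported one. I expect the only genuinely delicate point to be the module-map verification making $\id_H\boxtimes\rho^K(u_1)$ well-defined on the relative tensor product; the remainder is formal transport of structure, the analytic substance having already been carried out in Lemma~\ref{lem:  U_1^(H,K) }.
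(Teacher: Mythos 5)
Your proposal is correct and matches the argument the paper intends (the corollary is stated without proof, as an immediate consequence of Lemma~\ref{lem:  U_1^(H,K) }): the intertwining relations of that lemma mean the prescribed actions of $\cA(I_3)$ and $\cA(I_{412})$ are unitarily conjugate via $U_1$ to the corresponding actions on the representation $H\boxtimes K$, so transport of structure along $U_1$ gives the extension to all of $\Rep(\cA)$, with uniqueness by strong additivity. Your extra verification that $\id_H\boxtimes\rho^K(u_1)$ descends to the relative tensor product (i.e.\ that $\rho^K(u_1)$ is a left $\cA(I_{34})$-module map $K\to{}^{\varphi_1}K$) is exactly the right point to check and is carried out correctly.
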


Let us now consider a diffeomorhpism $\varphi_2:I_{234}\to I_{234}$ that sends $-1$ to $-i$ and is the identity near the boundary,
and let $u_2\in\cA(I_{341})$ be such that $\mathrm{Ad}(u_2)=\cA(\varphi_2)$.
We can then define $H\boxtimes^{\varphi_2}K$ analogously to \eqref{eq:   H box^phi K},
and we have an isomorphism of representations
\begin{equation}\label{eq:   def: U_2}
U_2=U_2^{(H,K)}:=\big(\id_{H}\boxtimes \rho^K(u_2)\big)\circ\rho^{H\boxtimes K}(u_2)^{-1}:H\boxtimes K\to H\boxtimes^{\varphi_2} K.
\end{equation}

We are now ready to translate the classical definition of the braiding into the language of Connes fusion:
\begin{definition}\label{def: braiding}
Given two representations $H$ and $K$,
the braiding isomorphism
\[
\beta_{H,K}:H\boxtimes K\to K\boxtimes H
\]
is the composite
\begin{equation}\label{eq: the braiding}
\begin{split}
\beta_{H,K}\,:\,\,H \boxtimes K\,\cong\,\,\,& H\boxtimes K \boxtimes H_0\\
\xrightarrow{\,U_1 \,{\scriptscriptstyle \boxtimes}\, \id\,}
\,\,& H\boxtimes^{\varphi_1}  K \boxtimes H_0\\
\xrightarrow{\,\id \,{\scriptscriptstyle \boxtimes}\, U_2\,}
\,\,& H\boxtimes^{\varphi_1} (K \boxtimes^{\varphi_2} H_0) \,\cong\, K \boxtimes^{\varphi_2} (H\boxtimes^{\varphi_1} H_0)\\
\xrightarrow{\,\id \,{\scriptscriptstyle \boxtimes}\,U_1^*\,}
\,\,& K \boxtimes^{\varphi_2}  (H\boxtimes H_0)\\
\xrightarrow{\,U_2^* \,{\scriptscriptstyle \boxtimes}\, \id\,}
\,\,& K \boxtimes (H\boxtimes H_0) \,\cong\, K \boxtimes H,
\end{split}
\end{equation}
where $H_0$ denotes the vacuum sector of the conformal net. The middle isomorphism is explained below.
\end{definition}
Pictorially, we like to represent the isomorphisms \eqref{eq: the braiding} as the following sequence of moves:
\begin{equation*} %\label{pic: the braiding}
\beta_{H,K}\,:\,\,\tikzmath[scale=.55]{\draw (0,0) circle (1) (-1,0) -- (1,0); \node at (0,-.5) {$\scriptstyle H$};\node at (0,.5) {$\scriptstyle K$}; }
\,\,\cong\,\,
\tikzmath[scale=.55]{\draw (-1,0) arc (180:0: 1 and 1.4)(-1,0) to[in=180-60, out =60](1,0)(-1,0) to[in=180+60, out =-60](1,0)(-1,0) arc (180:0: 1 and -1.4);
\node at (0,-.9) {$\scriptstyle H$};\node at (0,0) {$\scriptstyle K$};\node at (0,.9) {$\scriptstyle H_0$};}
\,\,\to\,\,
\tikzmath[scale=.55, xscale=-1]{\useasboundingbox (1,1.4) rectangle (-1,-1.4);
\draw (-1,0) arc (180:0: 1 and 1.4)(-1,0) to[in=180-60, out =60](1,0)(-1,0) to[in=180+60, out =-60](1,0);
\node[scale=.9] at (.47,-.78) {$\scriptstyle H$};\node at (0,0) {$\scriptstyle K$};\node at (0,.9) {$\scriptstyle H_0$};
\draw (0,-.5) to[in=-90, out =-82, looseness=2.7](1,0);}
\,\,\to\,\,
\tikzmath[scale=.55, xscale=-1]{\useasboundingbox (1,1.4) rectangle (-1,-1.4);
\draw (-1,0) arc (180:0:1) -- (-1,0); \draw (0,0) to[in=-90, out =-87, looseness=2.8](1,0); \draw (0,0) to[in=-90, out =-93, looseness=2.8](-1,0);
\node at (0,.5) {$\scriptstyle H_0$}; \node[scale=.9] at (.52,-.4) {$\scriptstyle H$}; \node[scale=.9] at (-.52,-.4) {$\scriptstyle K$};}
\,\,\to\,\,
\tikzmath[scale=.55, xscale=-1]{\useasboundingbox (1,1.4) rectangle (-1,-1.4);
\draw (-1,0) arc (180:0: 1 and 1.4)(-1,0) to[in=180-60, out =60](1,0)(-1,0) to[in=180+60, out =-60](1,0);
\node[scale=.9] at (-.45,-.78) {$\scriptstyle K$};\node at (0,0) {$\scriptstyle H$};\node at (0,.9) {$\scriptstyle H_0$};
\draw (0,-.5) to[in=-90, out =-98, looseness=2.7](-1,0);}
\,\,\to\,\,
\tikzmath[scale=.55]{\draw (-1,0) arc (180:0: 1 and 1.4)(-1,0) to[in=180-60, out =60](1,0)(-1,0) to[in=180+60, out =-60](1,0)(-1,0) arc (180:0: 1 and -1.4);
\node at (0,-.9) {$\scriptstyle K$};\node at (0,0) {$\scriptstyle H$};\node at (0,.9) {$\scriptstyle H_0$};}
\,\,\cong\,\,
\tikzmath[scale=.55]{\draw (0,0) circle (1) (-1,0) -- (1,0); \node at (0,-.5) {$\scriptstyle K$};\node at (0,.5) {$\scriptstyle H$};}
\end{equation*}

The isomorphism
$ %\begin{equation}\label{eq: middle iso}
H\boxtimes^{\varphi_1} (K \boxtimes^{\varphi_2} H_0) \cong K \boxtimes^{\varphi_2} (H\boxtimes^{\varphi_1} H_0)
$ %\end{equation}
which occurs in \eqref{eq: the braiding} 
requires some explanation.
Recall that for a right module $X$ and a left module $Y$
there is symmetry isomorphism $s:X\boxtimes_R Y\cong Y\boxtimes_{R^{\op}} X$ (Remark~\ref{rm: HK = KH}).
The isomorphism in the middle of \eqref{eq: the braiding} is the composite
\begin{equation}\label{eq: s-a-s}
\begin{split}
H\boxtimes_R {}^{\varphi_1} (K \boxtimes_R {}^{\varphi_2} &H_0) 
\xrightarrow s
{}^{\varphi_1} (K \boxtimes_R {}^{\varphi_2} H_0)\boxtimes_{R^\op} H
\\&\xrightarrow a
K \boxtimes_R {}^{\varphi_2}({}^{\varphi_1} H_0\boxtimes_{R^\op} H)
\xrightarrow {\id\boxtimes s}
K \boxtimes_R{}^{\varphi_2} (H\boxtimes_R{}^{\varphi_1} H_0),
\end{split}
\end{equation}
where the arrow labelled $a$ is the associator of Connes fusion.

\begin{remark}
At this point, it is not clear whether the braiding \eqref{eq: the braiding} depends on the choice of diffeomorphisms $\varphi_1$ and $\varphi_2$,
or whether it depends on our convention to add the vacuum sector on the top as opposed to the bottom.
In Section~\ref{sec: The Drinfel'd center}, we will show that it is independent of all these choices, by using the fact that it extends to the case when $H$ and $K$ are solitons ($H\in T^-_\cA$, $K\in T^+_\cA$, see Corollary~\ref{cor: One beta}).
\end{remark}

\begin{lemma}\label{lem: property of U}
Let $H$, $K$, and $L$ be representations.
Then the following diagram is commutative:
\begin{equation}\label{eq: property of U}
\tikzmath {
\node(1) at(0,1.4) {$(H\boxtimes K)\boxtimes L$};
\node(2) at(5.5,1.4) {$(H\boxtimes^{\varphi_1} K)\boxtimes L$};
\node(3) at(0,0) {$H\boxtimes (K\boxtimes L)$};
\node(4) at(5.5,0) {$H\boxtimes^{\varphi_1} (K\boxtimes L)$.\!};
\draw[->] (1) --node[above]{$\scriptstyle U_1^{(H,K)}\;\!\boxtimes\;\! \id_L$} (2);
\draw[->] (1) --node[left]{$\scriptstyle a$} (3);
\draw[->] (2) --node[right]{$\scriptstyle a$} (4);
\draw[->] (3) --node[above]{$\scriptstyle U_1^{(H,K\boxtimes L)}$} (4);  %
}
\end{equation}
A corresponding property holds for $U_2$.
\end{lemma}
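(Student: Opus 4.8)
The plan is to expand the definition of $U_1$, strip off the parts of it that involve only the action of the unitary $u_1$ (which the associator handles automatically, being an isomorphism of representations), and thereby reduce the square to a single operator identity that I can check by analysing how $u_1\in\cA(I_{341})$ acts on the iterated fusion products. The one structural fact I will lean on throughout is that the associator $a$ of Connes fusion is an isomorphism of representations, together with the fact (the corollary following Lemma~\ref{lem:  U_1^(H,K) }) that $U_1^{(H,K)}$ is itself an isomorphism of representations, so that $U_1^{(H,K)}\boxtimes\id_L$ is a legitimate morphism of representations.

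Write $u:=u_1$ and $U^{(-,-)}:=U_1^{(-,-)}$, and note that both $U^{(H,K)}$ and $U^{(H,K\boxtimes L)}$ are built from the \emph{same} $u\in\cA(I_{341})$. By definition $U^{(H,K)}\circ\rho^{H\boxtimes K}(u)=\id_H\boxtimes\rho^K(u)$, and likewise for $U^{(H,K\boxtimes L)}$. Since $a\colon(H\boxtimes K)\boxtimes L\to H\boxtimes(K\boxtimes L)$ intertwines all the algebra actions, it intertwines the two actions of $u$:
\[
a\circ\rho^{(H\boxtimes K)\boxtimes L}(u)=\rho^{H\boxtimes(K\boxtimes L)}(u)\circ a.
\]
Substituting the definition of $U^{(H,K\boxtimes L)}$ into $U^{(H,K\boxtimes L)}\circ a$ and applying this intertwining relation, the claimed equality $a\circ(U^{(H,K)}\boxtimes\id_L)=U^{(H,K\boxtimes L)}\circ a$ becomes, after post-composing with $\rho^{(H\boxtimes K)\boxtimes L}(u)$ and pre-composing with $a^{-1}$, equivalent to the single identity (in which the left-hand associator is the one for the $\varphi_1$-twisted products)
\[
(U^{(H,K)}\boxtimes\id_L)\circ\rho^{(H\boxtimes K)\boxtimes L}(u)=a^{-1}\circ\big(\id_H\boxtimes\rho^{K\boxtimes L}(u)\big)\circ a.
\]

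I would verify this last identity by a direct computation resting on the localisation of the $u$-action. Writing $I_{341}=I_{34}\cup I_1$ and using strong additivity, the $\cA(I_{34})$-part of the action of $u$ is carried by the left-most tensor factor (via the residual left $\cA(I_{34})$-action), while the $\cA(I_1)$-part, with $I_1\subset I_{12}$, is carried by the right-most factor (via the residual $\cA(I_{12})$-action). Thus on $(H\boxtimes K)\boxtimes L$ the operator $\rho^{(H\boxtimes K)\boxtimes L}(u)$ is felt on $H$ and on $L$; the role of $U^{(H,K)}$, encoded in the relation $U^{(H,K)}\circ\rho^{H\boxtimes K}(u)=\id_H\boxtimes\rho^K(u)$, is precisely to transport the $\cA(I_{34})$-part of the action off the $H$-slot and onto the $K$-slot by means of $\cA(\varphi_1)$. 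Consequently the left-hand composite implements the action of $u$ on the $K$- and $L$-slots (the former through the $\varphi_1$-twist), which is exactly what the right-hand side does, applying $u$ to the whole $(K\boxtimes L)$-block and leaving $H$ untouched. The claim for $U_2$ is identical, with $I_{234}$, $\varphi_2$, $u_2$ in place of $I_{341}$, $\varphi_1$, $u_1$.

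The hard part is that $I_{341}$ meets both the inner gluing region (along $I_3\cup I_4$) and the outer gluing region (through $I_1\subset I_{12}$). Because of this one cannot prove the lemma by the naive route of fusing each of the two defining factors of $U^{(H,K)}$ separately with $\id_L$: neither $\rho^{H\boxtimes K}(u)$ nor $\id_H\boxtimes\rho^K(u)$ commutes with the residual $\cA(I_{12})$-action used to form the outer Connes fusion with $L$, so neither is a right $\cA(I_{34})$-module map and neither admits a fusion with $\id_L$; only their composite $U^{(H,K)}$, being a morphism of representations, does. The two would-be factors carry compensating discrepancies over the $I_1$-overlap that cancel only in the composite. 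The delicate point is therefore to keep $U^{(H,K)}$ and the full action $\rho(u)$ intact throughout — using the rep-morphism property of $a$ and the functoriality of $\boxtimes$ applied to the genuine morphism $U^{(H,K)}$ — and to account precisely for the way the $\cA(I_1)$-part of the $u$-action is transferred from the $K$-slot to the $L$-slot as the bracketing is changed.
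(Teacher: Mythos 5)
Your reduction is sound as far as it goes: using that the associator and $U_1^{(H,K)}\boxtimes\id_L$ are morphisms of representations, and conjugating by $\rho(u)$, you correctly boil the square down to the single identity
\[
\big(U_1^{(H,K)}\boxtimes\id_L\big)\circ\rho^{(H\boxtimes K)\boxtimes L}(u_1)
=(a')^{-1}\circ\big(\id_H\boxtimes\rho^{K\boxtimes L}(u_1)\big)\circ a .
\]
The gap is that you never actually prove this identity. The ``localisation'' argument you offer for it --- that the $\cA(I_{34})$-part of the action of $u_1$ is carried by the left-most factor and the $\cA(I_1)$-part by the right-most factor --- is not available: $u_1$ is a single unitary in $\cA(I_{341})$ implementing a diffeomorphism, and it does not factor as a product of an element of $\cA(I_{34})$ with an element of $\cA(I_1)$. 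Strong additivity only says these two subalgebras \emph{generate} $\cA(I_{341})$; it gives no decomposition of $u_1$ itself, so there is no well-defined ``$\cA(I_{34})$-part'' or ``$\cA(I_1)$-part'' of $\rho(u_1)$ to track through the bracketing. Your own closing paragraph concedes exactly this (neither defining factor of $U_1^{(H,K)}$ is a right module map, so neither can be fused with $\id_L$ separately), and then names the remaining difficulty as ``delicate'' without resolving it. As written, the proposal is a correct setup followed by a description of what still needs to be shown.

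The paper sidesteps the computation entirely. It observes that every map in the square depends on $L$ only through its structure as an $\cA(I_{341})$-module (the outer fusion uses the $\cA(I_{34})$-action on $L$, and the action of $u_1$ uses in addition only the $\cA(I_1)$-action), and that all four maps are natural in such $L$. It then checks the case $L=H_0$, where the unitors identify both horizontal arrows with $U_1^{(H,K)}$ itself, and concludes by noting that the class of $L$ for which the square commutes is closed under direct sums and direct summands, while $H_0$ generates $\cA(I_{341})\text{-Mod}$ under those operations. If you want to salvage your direct approach, the cleanest fix is to import exactly this idea: verify your displayed identity for $L=H_0$ (where it is immediate) and extend by the same generation argument, rather than attempting to split the action of $u_1$.
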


\begin{proof}
The maps involved only depend on $L$ as an $\cA(I_{341})$-module.
Let $M:=\{L\in \cA(I_{341})$-Mod $:$ \eqref{eq: property of U} holds$\}$.
That category contains the vacuum sector $H_0$ as, in that case, the horizontal arrows in \eqref{eq: property of U} can be both identified with the map
$U_1^{(H,K)}:H\boxtimes K\to H\boxtimes^{\varphi_1} K$.
The category $M$ is closed under taking direct sums and taking direct summands.
$H_0$ generates $\cA(I_{341})$-Mod under those operations.
Therefore $M=\cA(I_{341})$-Mod.
\end{proof}

\begin{lemma}\label{lem: cocycle property of U}
Let $H$, $K$, and $L$ be representations.
Then the following diagram is commutative:
\[
\qquad\tikzmath {
\node(1) at(0,1.5) {$H\boxtimes K\boxtimes L$};
\node(2) at(5.5,1.5) {$H\boxtimes^{\varphi_1} (K\boxtimes L)$};
\node(3) at(0,0) {$(H\boxtimes K)\boxtimes^{\varphi_1} L$};
\node(4) at(5.5,0) {$H\boxtimes^{\varphi_1} (K\boxtimes^{\varphi_1} L)$.\!};
\draw[->] (1) --node[above]{$\scriptstyle %U_1^{(H,K)}\;\!\boxtimes\;\! \id_{L}\;=\;
U_1^{(H,K\boxtimes L)}$} (2);
\draw[->] (1) --node[left]{$\scriptstyle U_1^{(H\boxtimes K,L)}$} (3);
\draw[->] (2) --node[right]{$\scriptstyle \id_H\;\!\boxtimes\;\! U_1^{(K,L)}$} (4);
\draw[->] (3) --node[above]{$\scriptstyle a$} (4);  %
}
\]
A corresponding property holds for $U_2$.
\end{lemma}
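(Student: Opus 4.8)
The plan is to run the same generation argument as in the proof of Lemma~\ref{lem: property of U}. Every arrow in the square is additive in $L$ and, since $u_1\in\cA(I_{341})$ while all fusions involving $L$ are formed over the subalgebra $\cA(I_{34})\subset\cA(I_{341})$, each arrow depends on $L$ only through its structure as a module over $\cA(I_{341})$. Hence the class of those $L\in\cA(I_{341})$-Mod for which the square commutes is closed under direct sums and direct summands. As $\cA$ is irreducible, $\cA(I_{341})$ is a factor and the vacuum sector $H_0$, which restricts on the interval $I_{341}$ to the standard module $L^2(\cA(I_{341}))$, generates $\cA(I_{341})$-Mod under these two operations. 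It therefore suffices to treat the case $L=H_0$.

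The point of specialising to $L=H_0$ is that the twist becomes inessential there. By inner covariance, $u_1$ implements $\cA(\varphi_1)$ in the vacuum sector, so $\rho^{H_0}(u_1):H_0\to{}^{\varphi_1}H_0$ is an isomorphism of left $\cA(I_{34})$-modules. This lets us identify $M\boxtimes^{\varphi_1}H_0$ with $M\boxtimes H_0$ for every representation $M$, and carries the bottom arrow $a$ to the ordinary associator of Connes fusion. The two arrows whose second tensor factor is $H_0$, namely $U_1^{(H\boxtimes K,H_0)}$ and $U_1^{(K,H_0)}$, then reduce --- directly from the definition $U_1^{(M,H_0)}=(\id_M\boxtimes\rho^{H_0}(u_1))\circ\rho^{M\boxtimes H_0}(u_1)^{-1}$ --- to the operators $\rho^{\bullet}(u_1)^{-1}$, while the remaining arrow $U_1^{(H,K\boxtimes H_0)}$ becomes $U_1^{(H,K)}$ via the unit isomorphism $K\boxtimes H_0\cong K$ and naturality of $U_1$ in its second variable.

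The commutativity of the resulting square then rests on two structural facts: the associator of Connes fusion is an isomorphism of representations, hence intertwines the actions of $u_1\in\cA(I_{341})$ on the two triple fusions $(H\boxtimes K)\boxtimes H_0$ and $H\boxtimes(K\boxtimes H_0)$; and $\boxtimes$ is a bifunctor, which lets the factors $\id_M\boxtimes\rho^{H_0}(u_1)$ be reorganised across the associator. Together with the description of $\rho^{H\boxtimes K}(u_1)$ as the action induced on the fusion, these reduce the base case to a bookkeeping identity, establishing the square for $L=H_0$ and hence for all $L$. The argument for $U_2$ is verbatim the same, with $(\varphi_1,u_1)$ replaced by $(\varphi_2,u_2)$.

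I expect the genuine obstacle to be the base case itself: although the reduction to $L=H_0$ is formal, verifying the identity there requires carefully matching the several twisted module structures ${}^{\varphi_1}(-)$ and confirming that the bottom arrow $a$ really is carried to the standard associator --- equivalently, that $a$ is a morphism of representations, so that it intertwines the $u_1$-action. A more hands-on alternative would skip the reduction and expand all three $U_1$'s directly: on the right-hand composite the two copies of $\rho^{K\boxtimes L}(u_1)^{\pm1}$ cancel by functoriality of $\id_H\boxtimes(-)$, and on the left-hand composite one commutes the associator past $\rho^{(H\boxtimes K)\boxtimes L}(u_1)^{-1}$; the delicate point there is again to pin down the definition of the twisted associator $a:(H\boxtimes K)\boxtimes^{\varphi_1}L\to H\boxtimes^{\varphi_1}(K\boxtimes^{\varphi_1}L)$ and to check that it is a representation isomorphism.
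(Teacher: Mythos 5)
Your proposal is correct, but your primary route is a detour compared with what the paper actually does. The paper's proof of this lemma is precisely your ``hands-on alternative'': one expands all three $U_1$'s from the definition $U_1^{(M,N)}=(\id_M\boxtimes\rho^N(u_1))\circ\rho^{M\boxtimes N}(u_1)^{-1}$ and observes that in $(\id_H\boxtimes U_1^{(K,L)})\circ U_1^{(H,K\boxtimes L)}$ the factors $\id_H\boxtimes\rho^{K\boxtimes L}(u_1)^{\mp1}$ cancel by functoriality of $\id_H\boxtimes(-)$, leaving $(\id_{H\boxtimes K}\boxtimes\rho^L(u_1))\circ\rho^{H\boxtimes K\boxtimes L}(u_1)^{-1}=U_1^{(H\boxtimes K,L)}$ (with the associator suppressed, which is exactly where the bottom arrow $a$ lives). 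Your reduction to $L=H_0$ via the generation argument is valid --- all four arrows do depend on $L$ only through its $\cA(I_{341})$-module structure, and the vacuum generates $\cA(I_{341})$-Mod under direct sums and summands --- but it buys you nothing here: unlike in Lemma~\ref{lem: property of U}, where specialising to $H_0$ genuinely collapses the two horizontal arrows to the same map, in the cocycle square the case $L=H_0$ is verified by the very same telescoping identity as the general case, so the worry you flag about the base case being ``the genuine obstacle'' is misplaced. (The paper does use your reduction strategy, but only later, for the soliton analogue Lemma~\ref{lem: cocycle property of U '}, where $U_1^{(K,L)}$ is no longer given by the explicit formula and one must fall back on the computation \eqref{eq: cocycle property for U_1} in the vacuum case via Lemma~\ref{lem: reconciliation}.) Your closing remark that the only delicate point is identifying the twisted associator $a$ and checking it intertwines the relevant structures is a fair account of what the paper's two-line computation leaves implicit.
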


\begin{proof}
By definition,
\begin{align}
\notag
&\big(\id_H\boxtimes U_1^{(K,L)}\big)\circ U_1^{(H, K\boxtimes L)}\\
\notag=\,&
\id_{H}\boxtimes \Big(\big(\id_{K}\boxtimes \rho^L(u_1)\big)\circ\rho^{K\boxtimes L}(u_1)^{-1}\Big)
\circ
\Big(\big(\id_{H}\boxtimes \rho^{K\boxtimes L}(u_1)\big)\circ\rho^{H\boxtimes K\boxtimes L}(u_1)^{-1}\Big)\\
%\notag=\,&
%\big(\id_{H\boxtimes K}\boxtimes \rho^L(u_1)\big)\circ\big(\id_{H}\boxtimes \rho^{K\boxtimes L}(u_1)\big)^{-1}
%\circ
%\big(\id_{H}\boxtimes \rho^{K\boxtimes L}(u_1)\big)\circ\rho^{H\boxtimes K\boxtimes L}(u_1)^{-1}\\
=\,&																						\label{eq: cocycle property for U_1}
\big(\id_{H\boxtimes K}\boxtimes \rho^{L}(u_1)\big)\circ\rho^{H\boxtimes K\boxtimes L}(u_1)^{-1}\\
\notag=\,&
U_1^{(H\boxtimes K, L)}.\qedhere
\end{align}
\end{proof}

\begin{proposition}\label{prop: two `hexagon' axioms}
The isomorphism \eqref{eq: the braiding} satisfies the `hexagon' axioms
\[
\beta_{H,K\boxtimes L}=(\id_K\boxtimes \beta_{H,L})(\beta_{H,K}\boxtimes \id_L)
\quad\text{and}\quad
\beta_{H\boxtimes K, L}=(\beta_{H,L}\boxtimes \id_K)(\id_H\boxtimes \beta_{K,L})
\]
(we omit the associators for brevity).
\end{proposition}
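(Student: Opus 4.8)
The plan is to unwind the definition \eqref{eq: the braiding} of $\beta$ on both sides of each hexagon and to reduce the identity to the two compatibility lemmas for the localisation isomorphisms $U_1,U_2$ (Lemmas~\ref{lem: property of U} and~\ref{lem: cocycle property of U}), together with the coherence of Connes fusion, namely associativity and the symmetry isomorphism $s$ of Remark~\ref{rm: HK = KH}. The two hexagons are essentially mirror images of one another: in the first, $H$ is the object being transported past the product $K\boxtimes L$, whereas in the second a product $H\boxtimes K$ is transported past $L$. Since the braiding \eqref{eq: the braiding} treats its two arguments asymmetrically (the vacuum $H_0$ is inserted on top, and it is the first argument that is moved), I would prove the first hexagon in detail and then indicate the symmetric argument for the second.

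For the first hexagon, I would expand $\beta_{H,K\boxtimes L}$ using \eqref{eq: the braiding}, which replaces $K$ by $K\boxtimes L$ throughout and thereby introduces $U_1^{(H,K\boxtimes L)}$ and $U_2^{(K\boxtimes L,H_0)}$. The two lemmas are precisely the tools needed to break these apart: Lemma~\ref{lem: property of U} rewrites a $U_i$ whose moving object has been enlarged on the right by $\boxtimes L$ as the corresponding $U_i$ tensored with $\id_L$ (modulo associators), while the cocycle Lemma~\ref{lem: cocycle property of U} factors a $U_i$ whose first argument is a fusion $K\boxtimes L$ into a composite of two $U_i$'s, one involving only $K$ and one involving only $L$. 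Applying these repeatedly, and using naturality of $\boxtimes$ and the interchange law, I expect the composite defining $\beta_{H,K\boxtimes L}$ to reorganise into $(\id_K\boxtimes\beta_{H,L})\circ(\beta_{H,K}\boxtimes\id_L)$.

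The delicate part is the middle isomorphism \eqref{eq: s-a-s}, where the actual transposition of $H$ past $K$ (resp.\ $K\boxtimes L$) takes place. I would need to verify that the symmetry isomorphism $s$ and the associator $a$ of Connes fusion fit together coherently when $K$ is replaced by $K\boxtimes L$ in the presence of the vacuum sector: concretely, that transposing $H$ past $K\boxtimes L$ in a single application of \eqref{eq: s-a-s} agrees with transposing past $K$ and then past $L$, after inserting and removing the appropriate copies of $H_0$. This is a coherence statement for the pair $(s,a)$ in $\Bim(R)$, and the real work lies in the bookkeeping: keeping track of which algebra acts on which factor, of the diffeomorphisms $\varphi_1,\varphi_2$ and the regions they move, and of the placement of $H_0$ relative to the factors being swapped. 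This is where I expect the main obstacle to be.

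For the second hexagon I would run the mirror argument, now using the cocycle Lemma~\ref{lem: cocycle property of U} to split $U_1^{(H\boxtimes K,L)}$ and Lemma~\ref{lem: property of U} for the $U_2$ moves, together with the same coherence of $(s,a)$. Alternatively, once the braiding is known to be independent of the auxiliary choices and to extend to solitons, one hexagon could in principle be deduced from the other by conjugating with the bi-involutive structure; but since that independence is established only later (Corollary~\ref{cor: One beta}), to avoid circularity I would give the direct diagram chase in both cases.
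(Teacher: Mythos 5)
Your plan is essentially the paper's proof: the author expands all three braidings into one large commutative diagram, reads off $\beta_{H,K}\boxtimes\id_L$ along one path via Lemma~\ref{lem: property of U}, uses Lemma~\ref{lem: cocycle property of U} for the two nontrivial cells, and disposes of the remaining cells (the $(s,a)$-coherence you flag as delicate) as routine naturality; the second hexagon is likewise left to the mirror argument. Your identification of the roles of the two lemmas and of the genuine bookkeeping burden in \eqref{eq: s-a-s} matches the actual argument.
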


\begin{proof}
We only prove the first axiom.
To keep notations short, we drop the symbol $\boxtimes$, 
we write $H_1K$ for $H\boxtimes^{\varphi_1}K$, we write $H_2K$ for $H\boxtimes^{\varphi_2}K$,
and we omit the vacuum sector $H_0$.
The definition \eqref{eq: the braiding} of the braiding then becomes:
\[
\beta_{H,K}:HK\to H_1K\to H_1K_2\to K_2H_1\to K_2H\to KH,
\]
where the isomorphism $H_1K_2\to K_2H_1$ is the map constructed in \eqref{eq: s-a-s}.
Consider the following diagram, where all expressions are associated to the right unless otherwise indicated
(for example, $HKL$ stands for $H\boxtimes (K \boxtimes  (L \boxtimes H_0))$, $H_1K_2L_2$ stands for $H\boxtimes^{\varphi_1} (K \boxtimes^{\varphi_2}  (L \boxtimes^{\varphi_2} H_0))$, and $H_1(KL)_2$ stands for $H\boxtimes^{\varphi_1}\!((K\boxtimes L)\boxtimes^{\varphi_2}\!H_0)$):
\[
\tikzmath[yscale=-1]{
\node[scale=.9](3) at (2-.2,1.7) {$H_1 K_2 L $};
\node[scale=.9](4) at (3-.2,0.35) {$K_2 H_1 L $};
\node[scale=.9](5) at (4-.25,-.85) {$K_2 H L $};
\node[scale=.9](6) at (5-.4,-2) {$K H L $};
\node[scale=.9](7) at (6-.3,-1) {$K H_1 L $};
\node[scale=.9](8) at (7-.2,0) {$K H_1 L_2 $};
\node[scale=.9](9) at (8-.1,1) {$K L _2 H_1 $};
\node[scale=.9](10) at (9,2) {$K L _2 H $};

\node[scale=.9](A) at (-1,3) {$H K L $};
\node[scale=.9](B) at (.8,3) {$H_1 K L $};
\node[scale=.9](C) at (3,4.3) {$H_1 (K L)_2 $};
\node[scale=.9](D) at (5.7,4.3) {$(K L)_2 H_1 $};
\node[scale=.9](E) at (7.9,4.3) {$(K L)_2 H $};
\node[scale=.9](F) at (10.1,3) {$K L H $};

\node[scale=.9](a) at (3.1,2.8) {$H_1 K_2 L_2 $};
\node[scale=.9](b) at (4.25,1.3) {$K_2 H_1 L_2 $};
\node[scale=.9](c) at (5.6,2.3) {$K_2 L_2 H_1 $};
\node[scale=.9](d) at (6.7,3.2) {$K_2 L_2 H $};
\node[scale=.9](e) at (8.53,3.1) {$K_2 L H $};

\node[scale=.5] at (1.95,2.95) {$(\star)$};
\node[scale=.5] at (8.1,3.65) {$(\star)$};

\draw[->] (d) -- (e);
\draw[->] (e) -- (F);
\draw[->] (3) --node[fill=white, inner sep=1, scale=.8]{$a$} (4);  %
\draw[->] (4) -- (5);\draw[->] (5) -- (6);\draw[->] (6) -- (7);\draw[->] (7) -- (8);
\draw[->] (8) --node[fill=white, inner sep=1, scale=.8]{$a$} (9);  %
\draw[->] (9) -- (10);\draw[->] (10) -- (F);

\draw[->] (4) -- (7);
\draw[->] (A) -- (B);\draw[->] (B) -- (C);
\draw[->] (C) --node[fill=white, inner sep=1, scale=.8]{$a$} (D);   %
\draw[->] (D) -- (E);\draw[->] (E) -- (F);
\draw[->] (a) --node[fill=white, inner sep=1, scale=.8]{$a$} (b);  %
\draw[->] (3) -- (a);
\draw[->] (B) -- (3);
\draw[->] (C) --node[fill=white, inner sep=1, scale=.8]{$a$} (a);   %
\draw[->] (c) --node[fill=white, inner sep=1, scale=.8]{$a$} (D);   %
\draw[->] (4) -- (b);
\draw[->] (d) -- (10);
\draw[->] (c) -- (d);
\draw[->] (d) --node[fill=white, inner sep=1, scale=.8]{$a$} (E);  %
\draw[->] (b) --node[fill=white, inner sep=1, scale=.8]{$a$} (c);  %
\draw[->] (b) -- (8);
\draw[->] (c) -- (9);}
\]
The arrows labelled $a$ involve associators.

One reads $\beta_{H,K}\boxtimes \id_L$ along the top left (this is a consequences of Lemma~\ref{lem: property of U} given our convention that all expressions are associated to the right), one reads $\id_K\boxtimes \beta_{H,L}$ along the top right, and one reads $\beta_{H,K\boxtimes L}$ along the bottom.
In order to show that the desired equation $\beta_{H,K\boxtimes L}=(\id_K\boxtimes \beta_{H,L})(\beta_{H,K}\boxtimes \id_L)$ holds, it is therefore enough to argue that each individual cell in the above diagram is commutative.
The commutativity of the cells marked by a little star is the content of Lemma~\ref{lem: cocycle property of U}.
The other cells are easily seen to be commutative.
\end{proof}
%the cocycle property
%\[
%\qquad\tikzmath {
%\node(1) at(0,1.5) {$K\boxtimes L\boxtimes H$};
%\node(2) at(7,1.5) {$K\boxtimes^{\varphi_2} L\boxtimes H$};
%\node(3) at(0,0) {$(K\boxtimes L)\boxtimes^{\varphi_2} H$};
%\node(4) at(7,0) {$K\boxtimes^{\varphi_2} L\boxtimes^{\varphi_2} H$};
%\draw[->] (1) --node[above]{$\scriptstyle U_2^{(K,L)}\;\!\boxtimes\;\! \id_{H}\;=\;U_2^{(K,L\boxtimes H)}$} (2);
%\draw[->] (1) --node[left]{$\scriptstyle U_2^{(K\boxtimes L,H)}$} (3);
%\draw[->] (2) --node[right]{$\scriptstyle \id_K\boxtimes^{\varphi_2}U_2^{(K\boxtimes L,H)}$} (4);
%\draw[->] (3) --node[fill=white, inner sep=1, scale=.8]{$a$} (4);  %
%}
%\]
%which follows readily from the definition \eqref{eq:   def: U_2} of $U_2$.

\section{Solitons}

Let $\cA$ be a conformal net.
Recall that a \emph{soliton} is a Hilbert space (always assumed separable) equipped with compatible actions of the algebras $\cA(I)$, for all subintervals of $S^1$ whose interior does not contain the point $1$.
We write $T_\cA=T^+_\cA$ for the category of solitons.
The category $T^-_\cA$ is defined similarly, using the point $-1$ instead of the point $1$.
We also call the elements of $T^-_\cA$ solitons, when this creates no confusion.

Our first goal is to identify the categories $T^+_\cA$ and $T^-_\cA$ with subcategories of $\Bim(R)$.

\subsection{Solitons as bimodules}\label{sec: solitons as bimodules}

Let $I_1,\ldots,I_4$ be the subintervals of $S^1$ depicted in \eqref{eq: pic: I_1, I_2, I_3, I_4}, and let us adopt the same notations as in the previous section: $I_{12}=I_1\cup I_2$, $I_{23}=I_2\cup I_3$, etc.
Let $I_0$ be the standard interval which we use to parametrize the lower and upper halves of the standard circle, as in \eqref{eq: identifications}.
Let $\cA$ be a conformal net, and let $R:=\cA(I_0)$.

The parametrizations $I_0\to I_{34}$ and $\bar I_0\to I_{12}$ %described in \eqref{eq: identifications} 
induce an equivalence of categories between the category $\Bim(R)$ and the category whose objects are separable Hilbert spaces equipped with commuting left actions
of the algebras $\cA(I_{12})$ and $\cA(I_{34})$.
This allows us to identify $T^+_\cA$, $T^-_\cA$, and $\Rep(\cA)$ with subcategories of $\Bim(R)$:
\begin{align} \label{eq: three iotas}
\notag 
&\,\,\iota^+:\,T^+_\cA\;\to\; \Bim(R)\\
&\,\,\iota^-:\,T^-_\cA\;\to\; \Bim(R)\\ %\text{and}\quad
\notag 
&\iota:\,\Rep(\cA)\to \Bim(R).
\end{align}

\begin{lemma}\label{lem: T --> Bim(R) is fully faithful}
The functors $\iota^+$, $\iota^-$, and $\iota$ are fully faithful.
\end{lemma}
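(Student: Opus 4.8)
The plan is to observe that each of the three functors $\iota^+$, $\iota^-$, $\iota$ is the identity on underlying (separable) Hilbert spaces and on underlying bounded linear maps, retaining only the actions of $\cA(I_{12})$ and $\cA(I_{34})$, which assemble into the $R$-$R$-bimodule structure via the parametrizations $I_0\to I_{34}$ and $\bar I_0\to I_{12}$. Faithfulness is therefore immediate. The entire content of the lemma is \emph{fullness}: one must show that any morphism of the associated $R$-$R$-bimodules --- that is, any bounded map $f\colon H\to K$ intertwining the actions of $\cA(I_{12})$ and $\cA(I_{34})$ --- automatically intertwines the actions $\rho_I$ for \emph{all} of the remaining intervals $I$ entering the definition of the relevant category.

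The key tool is strong additivity (Definition~\ref{Def:CN}.{\it ii}). Fix such an $f$ together with an admissible interval $I$. I would decompose $I$ along the two points $\pm 1$ (where the closed semicircles $I_{12}$ and $I_{34}$ meet) into finitely many subintervals $J_1,\dots,J_m$, each contained in $I_{12}$ or in $I_{34}$, with consecutive ones meeting in a single point. Iterating strong additivity then gives $\cA(I)=\cA(J_1)\vee\cdots\vee\cA(J_m)$. For each $k$, compatibility of the actions (namely $\rho_{J_k}=\rho_{I_{12}}|_{\cA(J_k)}$ when $J_k\subset I_{12}$, and likewise for $I_{34}$) shows that $f$ intertwines the action of $\cA(J_k)$. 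Since the set $\{a\in\cA(I): f\,\rho^H_I(a)=\rho^K_I(a)\,f\}$ is a weakly closed subalgebra of $\cA(I)$ containing each of the von Neumann subalgebras $\cA(J_k)$, it contains the von Neumann algebra they generate, namely $\cA(I)$; hence $f$ intertwines $\rho_I$.

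The three cases differ only in how large the decomposition must be, which is governed by the excluded base point(s). For $\iota^+$ an interval has $1\notin\mathring I$, so $\mathring I$ is a connected subset of $S^1{\setminus}\{1\}$ meeting $\{-1\}$ in at most one interior point; one therefore makes at most a single cut, at $-1$, and when no cut is needed locality (Definition~\ref{Def:CN}.{\it i}) forces $I$ into a single closed semicircle. The case of $\iota^-$ is identical with the roles of $1$ and $-1$ exchanged. For $\iota$ an interval $I\subsetneq S^1$ may contain both $1$ and $-1$ in its interior, so one may need two cuts and a genuine iteration of strong additivity. I expect the main --- and essentially only --- point requiring care to be this geometric bookkeeping, together with the passage from a generating family of $*$-subalgebras to the von Neumann algebra they generate. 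No analytic difficulty arises: the intertwining relation $f\,\rho^H_I(a)=\rho^K_I(a)\,f$ is preserved under products, linear combinations, and (since the actions are normal) weak limits, so the decomposition argument closes immediately.
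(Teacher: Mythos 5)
Your argument is correct and is essentially the paper's own proof: faithfulness is immediate, and fullness follows by cutting an admissible interval $I$ at $\pm 1$ into pieces lying in $I_{12}$ or $I_{34}$, invoking strong additivity so that these pieces generate $\cA(I)$, and noting that the intertwining relation passes to the generated von Neumann algebra. Your version merely spells out the geometric bookkeeping and the weak-closure step more explicitly (the appeal to locality to place an uncut interval in a single semicircle is unnecessary --- connectedness of $\mathring I$ suffices --- but this is harmless).
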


\begin{proof}
The functors $\iota^+$, $\iota^-$, $\iota$ are clearly faithful. We prove that they are full.
Let $H,K\in T^+(\cA)$ (respectively $H,K\in T^-(\cA)$, respectively $H,K\in\Rep(\cA)$), and let $f:H\to K$ be a morphism in $\Bim(R)$.
By definition, $f$ commutes with the actions of $\cA(I_{12})$ and of $\cA(I_{34})$.
Let $I\subset S^1$ be an interval which does not contain $1$ in its interior (respectively an interval such that $-1\not\in\mathring I$, respectively any subinterval of $S^1$).
By assumption, $f$ commutes with $\cA(I\cap I_{12})$ and $\cA(I\cap I_{34})$.
By strong additivity (Definition~\ref{Def:CN}.{\it ii}\;\!), these two algebras generate a dense subalgebra of $\cA(I)$.
So $f$ commutes with $\cA(I)$.
This being true for any $I$, $f$ is a morphism in $T^+(\cA)$ (respectively a morphism in $T^-(\cA)$, respectively a morphism in $\Rep(\cA)$).
\end{proof}

For a Hilbert space equipped with commuting left actions of $\cA(I_{12})$ and of $\cA(I_{34})$, consider the following properties:\smallskip
\begin{enumerate}
\item[(a)] The actions of $A(I_2)$ and $A(I_3)$ extend to an action of $A(I_{23})$.
\item[(b)] The actions of $A(I_4)$ and $A(I_1)$ extend to an action of $A(I_{41})$.\smallskip
\end{enumerate}
By using the parametrizations~\eqref{eq: identifications},
%transferring along the equivalence of categories described above, 
we may treat (a) and (b) % above two conditions 
as conditions on $R$-$R$-bimodules.

\begin{lemma}\label{lem: essential images}
The essential images of the functors $\iota^+$, $\iota^-$, and $\iota$ %\eqref{eq: three iotas}
%$\iota^+:T^+_\cA\to \Bim(R)$, $\iota^-:T^-_\cA\to \Bim(R)$, and $\iota:\Rep(\cA)\to \Bim(R)$
are given~by:
\begin{align*}
&\mathrm{Im}(\iota^+)=\{H\in\Bim(R)\,|\, \text{\rm condition (a) holds}\},\\
&\mathrm{Im}(\iota^-)=\{H\in\Bim(R)\,|\, \text{\rm condition (b) holds}\},\\
&\mathrm{Im}(\iota)\,=\,\{H\in\Bim(R)\;|\; \text{\rm both (a) and (b) hold}\}.
\end{align*}
\end{lemma}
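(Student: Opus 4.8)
The plan is to establish the three essential-image equalities in parallel, proving the statement for $\iota^+$ in full and obtaining the other two by symmetry: rotating $S^1$ by $\pi$ interchanges the base points $1$ and $-1$, hence interchanges $T^+_\cA$ with $T^-_\cA$ and condition~(a) with condition~(b), while the case of $\iota$ is simply the conjunction of the two. Each equality demands two inclusions. The inclusion $\mathrm{Im}(\iota^+)\subseteq\{H : \text{(a) holds}\}$ is immediate from the definitions: if $H\in T^+_\cA$ then $1\notin\mathring I_{23}$, so $\cA(I_{23})$ acts on $H$, and the soliton compatibility condition $\rho_I|_{\cA(J)}=\rho_J$ for $J\subset I$ forces $\rho_{I_{23}}$ to restrict to the actions of $\cA(I_2)$ and $\cA(I_3)$ coming from $\iota^+(H)$; thus (a) holds.

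For the reverse inclusion I must promote a bimodule $H$ satisfying (a) to a genuine soliton, i.e.\ produce compatible actions $\rho_I:\cA(I)\to\B(H)$ for every $I\subsetneq S^1$ with $1\notin\mathring I$, extending the given actions of $\cA(I_{12})$, $\cA(I_{34})$ and $\cA(I_{23})$. The key geometric observation is that these three half-intervals cover $S^1_{\mathrm{cut}}$, and consecutive ones overlap in a \emph{full} interval: $I_{12}\cap I_{23}=I_2$ and $I_{23}\cap I_{34}=I_3$. Of the three interior marked points $i,-1,-i$ of $S^1_{\mathrm{cut}}$, the gluings at $i\in\mathring I_{12}$ and at $-i\in\mathring I_{34}$ are already supplied by the full half-algebra actions; the single genuinely new gluing, across $-1$, is precisely what (a) provides. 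Concretely, I would build $\rho_{I_{123}}$ by gluing $\rho_{I_{12}}$ and $\rho_{I_{23}}$ over $I_2$, and $\rho_{I_{234}}$ by gluing $\rho_{I_{23}}$ and $\rho_{I_{34}}$ over $I_3$. Any $I$ with $1\notin\mathring I$ is then either contained in one of $I_{12},I_{34},I_{123},I_{234}$, in which case $\rho_I$ is defined by restriction, or else $I\supset I_{23}$, in which case $I$ is covered by $I_{123}$ and $I_{234}$ overlapping in $I_{23}$ and $\rho_I$ is obtained by one final gluing (the two pieces agree on $\cA(I_{23})$ by construction).

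The crux, and the step I expect to be the main obstacle, is the gluing principle underlying all of the above: given compatible actions of $\cA(I')$ and $\cA(I'')$ agreeing on $\cA(I'\cap I'')$, with $I'\cap I''$ a subinterval of nonempty interior and $I:=I'\cup I''$ again an interval, one must produce a genuine action of $\cA(I)$ restricting to both. Strong additivity (Definition~\ref{Def:CN}.{\it ii}) gives $\cA(I)=\cA(I')\vee\cA(I'')$, which immediately yields \emph{uniqueness} of the glued action; \emph{existence}, however, is not formal, since a von Neumann algebra generated by two subalgebras need not be their amalgamated product over the intersection. I would handle existence by using strong additivity to reduce to the gluing of two algebras sharing a single boundary point and then invoking the vacuum-sector mechanism (Definition~\ref{Def:CN}.{\it v}) that is built into the net for exactly this purpose, together with locality (Definition~\ref{Def:CN}.{\it i}) to control the commutation of the pieces that lie in disjoint quarter-intervals. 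Once the glued actions exist, a final verification that the family $\{\rho_I\}$ is restriction-compatible — which may be tested on a generating set of small subintervals and so again reduces to strong additivity — shows that $H$ is a soliton whose image under $\iota^+$ recovers the original bimodule.

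For $\iota$ one runs the identical argument on the full circle, now using all four half-intervals $I_{12},I_{23},I_{34},I_{41}$: conditions (a) and (b) supply the two nontrivial gluings, across $-1$ and across $1$ respectively, the gluings at $i$ and $-i$ remain free, and the combinatorics of covering $S^1$ by overlapping intervals is the same, with the gluing principle applied one additional time.
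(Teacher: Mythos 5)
Your overall route is the same as the paper's: the forward inclusions are immediate from restricting the soliton/representation structure, and the reverse inclusions are obtained by gluing the given actions of $\cA(I_{12})$, $\cA(I_{23})$, $\cA(I_{34})$ (and $\cA(I_{41})$) along the cover of $S^1_{\mathrm{cut}}$ (resp.\ $S^1$). Your combinatorial reduction --- every interval $I\subsetneq S^1_{\mathrm{cut}}$ with $1\notin\mathring I$ either sits inside one of $I_{12},I_{34},I_{123},I_{234}$ or contains $I_{23}$ and is covered by $I_{123}\cup I_{234}$ --- is correct and is exactly the situation handled in one stroke by Lemma~\ref{Lem.1.9 of CN1}, which is how the paper proceeds (citing \cite[Lem.\,1.9]{BDH-cn1} for $M=S^1$ and \cite[Lem.\,4]{Conformal-nets-are-factorization-algebras} for $M=S^1_{\mathrm{cut}}$).

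The gap is in the step you yourself flag as the crux: the existence half of the gluing principle. Invoking the vacuum sector axiom (Definition~\ref{Def:CN}.{\it v}) is not enough as stated, because that axiom asserts the extension of $\cA(J)\otimes_{\mathrm{alg}}\cA(\bar J)$ across the point $p$ only on the \emph{specific} Hilbert space $L^2(\cA(I))$, whereas you need it on an arbitrary separable $H$ carrying the given actions. Transporting the axiom from $L^2(\cA(I))$ to $H$ is the real content of Lemma~\ref{Lem.1.9 of CN1}, and it requires ingredients absent from your sketch: one shrinks the two pieces to create a gap, uses the \emph{split property} (Definition~\ref{Def:CN}.{\it iii}) to realize the resulting pair of commuting actions as a module over a spatial tensor product, and then uses the uniqueness of nonzero separable modules over such (type III) factors to identify $H$ with $L^2(\cA(I))$ equivariantly, after which the vacuum-sector extension can be pulled back and checked to be independent of the choices. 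Strong additivity and locality, which are the only tools you name besides the vacuum sector, give you uniqueness and commutation but cannot produce the extension. So your proof is correct in outline but incomplete exactly where the paper delegates to the pre-established technical lemma; either cite that lemma or supply the split-property/module-uniqueness argument.
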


\noindent
In order to establish this lemma,
we will need the following technical result: % from \cite{Conformal-nets-are-factorization-algebras} and~\cite{BDH-cn1}:

\begin{lemma}\label{Lem.1.9 of CN1}
Let $M$ be a connected 1-manifold (either a circle or an interval), and
let $\{I_i\subset M\}_{i\in\cI}$ be a collection of intervals that satisfy
\[
\bigcup_{i\in\cI}I_i=M\qquad \text{and}\qquad \bigcup_{i\in\cI}\mathring I_i=\mathring M.
\]
Let $H$ be a Hilbert space equipped with actions $\rho_i:\cA(I_i)\to B(H)$ for $i\in \cI$
which are compatible in the sense that:
\begin{enumerate}
\item 
$\rho_i|_{\cA(I_i\cap I_j)}=\rho_j|_{\cA(I_i\cap I_j)}:\cA(I_i\cap I_j)\to B(H)$.
\item
For every $j,k\in{\cI}$ and every intervals $J\subset I_j$, $K\subset I_k$ with disjoint interiors, 
the algebras $\rho_j(\cA(J))$ and $\rho_k(\cA(K))$ commute.
\end{enumerate}
Then for every interval $I\subsetneq M$, the actions
\[
\rho_i|_{\cA(I\cap I_i)}:\cA(I\cap I_i)\to B(H)
\]
extend uniquely to an action of $\cA(I)$ on $H$. %\to B(H)}$.
\end{lemma}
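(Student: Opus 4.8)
The plan is to split the statement into a uniqueness part and an existence part, and to reduce existence to the gluing of just two overlapping intervals, which is where the conformal-net axioms do the real work.

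\medskip

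\noindent\emph{Uniqueness.} Any extension $\rho_I$ must restrict to $\rho_i$ on each $\cA(I\cap I_i)$. The intervals $I\cap I_i$ cover $I$, and their interiors cover $\mathring I$ (if $x\in\mathring I\subseteq\mathring M$ then $x\in\mathring I_i$ for some $i$, whence $x\in\mathring{(I\cap I_i)}$), so by iterated strong additivity (Definition~\ref{Def:CN}.{\it ii}) the subalgebras $\cA(I\cap I_i)$ generate $\cA(I)$ as a von Neumann algebra. Since actions are normal, $\rho_I$ is determined on a weakly dense $*$-subalgebra, hence unique.

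\medskip

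\noindent\emph{Reduction of existence.} As $I$ is compact and covered by the $I_i$, I would pass to a finite subcover and, by a standard ordering argument for intervals in a $1$-manifold, relabel it as $K_1,\dots,K_n$ with $K_\ell=I\cap I_{i_\ell}$, arranged so that $\bigcup_\ell K_\ell=I$, consecutive $K_\ell,K_{\ell+1}$ overlap in an interval, and non-consecutive ones have disjoint interiors. I then induct on $n$. For $n=1$ one has $I\subseteq I_{i_1}$ and $\rho_{i_1}|_{\cA(I)}$ is the required action. For the inductive step, set $I':=K_1\cup\cdots\cup K_{n-1}$ (itself an interval, being a chain) and write $I=I'\cup K_n$; by induction $\cA(I')$ acts, and $I'\cap K_n=K_{n-1}\cap K_n$ is a single interval, on which the two actions agree by hypothesis~(1), while their spatially separated parts commute by hypothesis~(2). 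So everything reduces to gluing two overlapping intervals.

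\medskip

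\noindent\emph{The two-interval gluing (the crux).} Writing $J:=I'$, $K:=K_n$, decompose $I=L_1\cup L_2\cup L_3$ into consecutive subintervals with $J=L_1\cup L_2$, $K=L_2\cup L_3$, and overlap $L_2=J\cap K$. The actions $\rho_J,\rho_K$ agree on $\cA(L_2)$, and $\rho_J(\cA(L_1))$ commutes with $\rho_K(\cA(L_3))$ by hypothesis~(2), so by the split property (Definition~\ref{Def:CN}.{\it iii}) they assemble into an action of $\cA(L_1\sqcup L_3)=\cA(L_1)\,\bar\otimes\,\cA(L_3)$. The substance of the step is then to upgrade the compatible actions of $\cA(J)$ and $\cA(K)$ into a single \emph{normal $*$-homomorphism} out of $\cA(I)$. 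Since $\cA(I)$ is generated by $\cA(J)$ and $\cA(K)$ (strong additivity) but is \emph{not} their free or amalgamated product, one cannot simply define the extension on words in the generators: the issue is well-definedness, i.e. that whenever two such words coincide in $\cA(I)$ their images in $\B(H)$ coincide. This is the kind of gluing along a common seam that the vacuum sector axiom (Definition~\ref{Def:CN}.{\it v}), together with the Haag duality built into the definition, is designed to control; concretely I would realize $H$ through the standard form of the factor $\cA(L_2)$ and transport the vacuum-sector extension. I expect this two-interval gluing to be the main obstacle: producing operators with the correct commutation relations is routine, whereas promoting them to an honest normal representation of $\cA(I)$ is the nontrivial input, and it is precisely here that irreducibility (factoriality), the split property, and the vacuum sector axiom are all needed.
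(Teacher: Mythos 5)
Your outline has the right architecture (uniqueness from strong additivity; compactness, a chain decomposition, and induction reducing everything to gluing two overlapping intervals), and you correctly diagnose where the real content lies. But the proof stops exactly at that point: the two-interval gluing, which is the entire substance of the lemma, is not carried out. Saying that the vacuum sector axiom ``is designed to control'' this gluing and that you ``would realize $H$ through the standard form of the factor $\cA(L_2)$ and transport the vacuum-sector extension'' names the relevant axiom but is not an argument; in particular the well-definedness problem you yourself raise (two words in the generators $\cA(J)$, $\cA(K)$ representing the same element of $\cA(I)$ must have equal images) is never resolved. Note also that the standard form of the overlap algebra $\cA(L_2)$ is not the right device: identifying $H$ with a multiple of $L^2(\cA(L_2))$ as an $\cA(L_2)$-module gives you no control over where the actions of $\cA(L_1)$ and $\cA(L_3)$ go, so there is nothing to transport along. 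What is actually needed is a unitary identification of $H$ with a vacuum-type module that simultaneously intertwines the actions of both halves of $I$ cut at an interior point $p$ of the overlap (this is where the vacuum sector axiom, applied to $J\cup_p\bar J$, together with properly-infiniteness of the local factors, enters), followed by the strong-additivity uniqueness argument to check that the transported $\cA(I)$-action restricts correctly to $\cA(J)$ and $\cA(K)$.

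A second, more local problem: you invoke the split property to assemble the commuting actions of $\cA(L_1)$ and $\cA(L_3)$ on $H$ into a normal action of $\cA(L_1)\,\bar\otimes\,\cA(L_3)$. The axiom (Definition~\ref{Def:CN}.\emph{iii}) only asserts that the map $\cA(L_1)\otimes_{\mathrm{alg}}\cA(L_3)\to\cA(K)$ extends to the spatial tensor product for disjoint $L_1,L_3\subset K$ --- i.e.\ it is a statement about the net, usable once one already has an action of some $\cA(K)$ containing both. Here no such action exists yet (constructing it is the goal), and commuting normal actions of two von Neumann algebras do not in general yield a normal action of their spatial tensor product. For comparison, the paper does not prove this lemma internally at all: it cites \cite[Lem.\,1.9]{BDH-cn1} for the circle case and \cite[Lem.\,4]{Conformal-nets-are-factorization-algebras} for the interval case, and those references contain precisely the gluing argument that is missing here.
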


\begin{proof}
The case when $M$ is a circle was proved in \cite[Lem.\,1.9]{BDH-cn1}.
The case when $M$ is an interval was proved in \cite[Lem.\,4]{Conformal-nets-are-factorization-algebras}. (And the two proofs are essentially the same.)
\end{proof}

\begin{proof}[Proof of Lemma~\ref{lem: essential images}]
Clearly, every $H\in T^+(\cA)$ satisfies (a), every $H\in T^-(\cA)$ satisfies (b), and every $H\in \Rep(\cA)$ satisfies both.

Let $S^1_{\mathrm{cut}}$ be the manifold described in Section~\ref{sec: Representations and solitons}.
If a bimodule $H\in\Bim(R)$ satisfies condition (a), then we may apply Lemma~\ref{Lem.1.9 of CN1} with $M=S^1_{\mathrm{cut}}$.
The Hilbert space $H$ admits actions of the algebras $\cA(I)$ for all $I\subsetneq S^1_{\mathrm{cut}}$, and is therefore a soliton.
The argument for $T^-(\cA)$ is identical.

If a bimodule $H\in \Bim(R)$ satisfies both (a) and (b), then we can apply Lemma~\ref{Lem.1.9 of CN1} with $M=S^1$.
The Hilbert space $H$ admits actions of the algebras $\cA(I)$ for all $I\subsetneq S^1$, and is therefore a representation of $\cA$.
\end{proof}

\begin{lemma}\label{lem: T_A and Rep(A) closed under fusion}
The subcategories $T^+_\cA$, $T^-_\cA$, and $\Rep(\cA)$ of $\Bim(R)$ are closed under Connes fusion.
\end{lemma}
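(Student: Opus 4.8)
The plan is to reduce, via Lemma~\ref{lem: essential images}, to showing that conditions (a) and (b) are preserved by Connes fusion. Recall that on $H\boxtimes K=H\boxtimes_{\cA(I_{34})}K$ the algebra $\cA(I_{34})$ acts through the left factor $H$ and $\cA(I_{12})$ acts through the right factor $K$; in particular $\cA(I_3)$ acts via $H$ and $\cA(I_2)$ acts via $K$, and these two actions commute (one acts through $H$, the other through $K$). Condition (a) for $H\boxtimes K$ is then exactly the statement that this pair of commuting actions extends across the point $-1$ to an action of $\cA(I_{23})$. I would stress at the outset why this is the entire content: the only actions that $H\boxtimes K$ manifestly carries are those of $\cA(I_{12})$ and $\cA(I_{34})$, whose interiors do not cover a neighbourhood of $-1$, so Lemma~\ref{Lem.1.9 of CN1} cannot yet be applied to assemble a soliton; the one missing extension is the whole problem.

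The core step is to produce that extension by the vacuum-generation device already used in Lemma~\ref{lem: property of U}. The key observation is that the construction of $H\boxtimes K$ together with the two relevant actions of $\cA(I_2)$ (via $K$) and $\cA(I_3)$ (via $H$) depends on $K$ only through its restriction to $\cA(I_{234})$, since the fusion uses $K|_{\cA(I_{34})}$ and the surviving action is $K|_{\cA(I_2)}$, with $I_{34},I_2\subset I_{234}$. I would therefore fix $H\in T^+_\cA$ and let $M\subseteq\cA(I_{234})$-Mod be the full subcategory of those $L$ for which the commuting actions of $\cA(I_2)$ (via $L$) and of $\cA(I_3)$ (via $H$) on $H\boxtimes_{\cA(I_{34})}L$ extend to $\cA(I_{23})$. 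Since Connes fusion is additive and preserves direct summands, and since an $\cA(I_{23})$-action---being generated by $\cA(I_2)\vee\cA(I_3)$ by strong additivity (Definition~\ref{Def:CN}.{\it ii}\,)---both distributes over direct sums and restricts along any $\cA(I_{234})$-linear projection, the subcategory $M$ is closed under direct sums and direct summands. The vacuum sector lies in $M$: the right unitor identifies $H\boxtimes_{\cA(I_{34})}H_0$ with $H$ in a way that carries the $\cA(I_2)$-action coming from $H_0$ to $H$'s own $\cA(I_2)$-action and the $\cA(I_3)$-action coming from $H$ to $H$'s own $\cA(I_3)$-action, so condition (a) for $H\boxtimes H_0$ is just condition (a) for $H$, which holds because $H\in T^+_\cA$. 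As $H_0$ generates $\cA(I_{234})$-Mod under direct sums and summands (by the same reasoning as in Lemma~\ref{lem: property of U}), we conclude $M=\cA(I_{234})$-Mod; applying this with $L=K$ shows $H\boxtimes K$ satisfies (a), so $H\boxtimes K\in T^+_\cA$ by Lemma~\ref{lem: essential images}.

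The case of $T^-_\cA$ is identical after replacing $-1$ by $1$: condition (b) concerns the actions of $\cA(I_4)$ (via $H$) and $\cA(I_1)$ (via $K$), whose relevant data on $K$ is the restriction to $\cA(I_{341})$, and one runs the same argument in $\cA(I_{341})$-Mod (the module category already appearing in Lemma~\ref{lem: property of U}). For $\Rep(\cA)$ one simply combines the two: if $H,K$ satisfy both (a) and (b), then so does $H\boxtimes K$, and Lemma~\ref{lem: essential images} yields a genuine representation. The main obstacle is conceptual rather than computational: the extension across $-1$ genuinely mixes an action coming from $K$ with one coming from $H$, and neither factor's own extension across $-1$ is directly available, because the inner halves---the $\cA(I_2)$-action on $H$ and the $\cA(I_3)$-action on $K$---have been consumed by the fusion. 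The point of the vacuum-generation argument is precisely to obtain the extension abstractly, without building it by hand; the one subtlety to check carefully is that the whole situation depends on the varied factor only through a single interval algebra, so that the vacuum sector---which does \emph{not} generate $T^+_\cA$ as a bimodule category---nonetheless generates the relevant module category $\cA(I_{234})$-Mod.
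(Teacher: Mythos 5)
Your proof is correct, and its first step---reducing via Lemma~\ref{lem: essential images} to the preservation of conditions (a) and (b), and by symmetry to just one of them---is exactly the paper's. Where you diverge is in how the extension of the $\cA(I_2)$- and $\cA(I_3)$-actions to $\cA(I_{23})$ is obtained: the paper simply cites \cite[Cor.~1.29]{BDH-cn1}, whereas you reprove the needed statement internally by the vacuum-generation device that the paper itself uses for Lemmas~\ref{lem: NT between module categories} and~\ref{lem: property of U}: fix $H$, observe that the relevant structure on $H\boxtimes_{\cA(I_{34})}L$ depends on $L$ only through its $\cA(I_{234})$-module structure (strong additivity, $\cA(I_{234})=\cA(I_2)\vee\cA(I_{34})$, doing the real work here), verify the case $L=H_0$ via the unitor $H\boxtimes H_0\cong H$, and propagate by direct sums and summands. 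This is in substance a reproof, in the special case needed, of the cited corollary, and it rests on the same mechanism by which that corollary is established in \cite{BDH-cn1} (the vacuum sector plus the fact that every module is a summand of an amplification of it). What your version buys is self-containedness, together with the slightly stronger statement implicit in your setup: $H\boxtimes K$ satisfies (a) as soon as $H$ does and $K$ is merely an $\cA(I_{234})$-module, with no hypothesis (a) on $K$. The points you would want to make fully explicit---that $\id_H\boxtimes p$ commutes with the $\cA(I_3)$-action coming from $H$ as well as with the $\cA(I_2)$-action coming from $L$, hence with all of $\cA(I_{23})$ by strong additivity, so that the extension both sums and cuts down to summands, and that uniqueness of the extension on a generating pair of subalgebras makes these operations consistent---are routine and correct as you indicate.
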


\begin{proof}
In view of Lemma~\ref{lem: essential images}, it is enough to show that the properties (a) and (b) are preserved under fusion.
By symmetry, it is enough to treat just one of them.

Let $H$ and $K$ be bimodules that satisfy property (a).
Then, by \cite[Cor.~1.29]{BDH-cn1}, the actions of $\cA(I_2)$ on $K$ and $\cA(I_3)$ on $H$ extend to an action of $\cA(I_{23})$ on $H\boxtimes K$.
That is, $H\boxtimes K$ satisfies (a).
(The intervals which were denoted by $I$, $I_l$, $I_r$, and $I_l\circledast_II_r$ in \cite[Cor.~1.29]{BDH-cn1} correspond to $I_0$, $I_{123}$, $I_{234}$, and $I_{23}$, respectively.)
\end{proof}

\subsection{The braiding between $T^-_\cA$ and $T^+_\cA$}\label{sec: the braiding between T^- and T^+}

Given two solitons $H\in T^-_\cA$ and $K\in T^+_\cA$, we can use the inclusions \eqref{eq: three iotas} %$T^-_\cA\rightarrow \Bim(R)$ and $T^+_\cA\rightarrow \Bim(R)$
to define their fusion $H\boxtimes K\in\Bim(R)$.
The goal of this section is to extend the braiding on $\Rep(\cA)$ to a braiding
\[
\beta_{H,K}:H\boxtimes K\to K\boxtimes H
\]
which is defined for all $H\in T^-_\cA$ and $K\in T^+_\cA$.

Let $\varphi_1:I_{341}\to I_{341}$ and %$\varphi_2:I_{234}\to I_{234}$, 
%$u_1\in\cA(I_{341})$, and
\[
H\boxtimes^{\varphi_1} K:=H\boxtimes_R{}^{\varphi_1} K %\qquad \text{and}\qquad H\boxtimes^{\varphi_2} K:=H\boxtimes_R{}^{\varphi_2} K
\]
 %, and $u_2\in\cA(I_{341})$ 
be as in Section~\ref{sec: the braiding}.
Here, as before, 
${}^{\varphi_1\!}K$ denotes the Hilbert space $K$ with action of $\cA(I_{34})$ twisted by $\cA(\varphi_1):\cA(I_{34})\to \cA(I_{3})$.
We equip $H\boxtimes^{\varphi_1} K$ %and $H\boxtimes^{\varphi_2} K$ 
with the following actions of the algebras 
$\cA(I_{12})$, $\cA(I_3)$, and $\cA(I_4)$.
The algebras $\cA(I_{12})$ and $\cA(I_{4})$ act on $H\boxtimes^{\varphi_1} K$ by their usual action on $K$.
%The algebra $\cA(I_{4})$ acts on $H\boxtimes^{\varphi_1} K$ by its usual action on $K$.
%The algebra $\cA(I_{3})$ acts on $H\boxtimes^{\varphi_2} K$ by its usual action on $K$.
The algebra $\cA(I_3)$ acts by first applying $\cA(\varphi_1)^{-1}:\cA(I_3)\to \cA(I_{34})$ and then using the usual action of $\cA(I_{34})$ on $H$.
%The algebra $\cA(I_4)$ acts on $H\boxtimes^{\varphi_2} K$ by first applying $\cA(\varphi_2)^{-1}:\cA(I_4)\to \cA(I_{34})$ and then using the action of $\cA(I_{34})$ on $H$.
We find it useful to represent the Hilbert spaces $H\boxtimes K$ and $H\boxtimes^{\varphi_1} K$
by the following pictures: % and $H\boxtimes^{\varphi_2} K$:
\[
H\boxtimes K\,=\,\,
\tikzmath[scale=.65]{\draw (0,0) circle (1) (-1,0) -- (1,0); \node at (0,-.5) {$H$};\node at (0,.5) {$K$};
\node[fill=white, circle, inner sep=0] at (-1,-.135) {$\scriptscriptstyle \ast$};
\node[fill=white, circle, inner sep=0] at (1,.135) {$\scriptscriptstyle \ast$};
}
\qquad\qquad
H\boxtimes^{\varphi_1} K\,=\,\,
\tikzmath[scale=.65, xscale=-1]{\useasboundingbox (1,1.4) rectangle (-1,-1.4);
\draw (-1,0) arc (180:0:1) -- (-1,0); \draw (0,0) to[in=-90, out =-87, looseness=2.8](1,0);
\node at (0,.5) {$K$}; \node[scale=1.1] at (.52,-.4) {$\scriptstyle H$};
\node[fill=white, circle, inner sep=0] at (-1,.06) {$\scriptscriptstyle \ast$};
\node[fill=white, circle, inner sep=0] at (1,-.135) {$\scriptscriptstyle \ast$};
}
%\quad\qquad
%H\boxtimes^{\varphi_2} K\,=\,\,
%\tikzmath[scale=.65, xscale=-1]{\useasboundingbox (1,1.4) rectangle (-1,-1.4);
%\draw (-1,0) arc (180:0:1) -- (-1,0); \draw (0,0) to[in=-90, out =-93, looseness=2.8](-1,0);
%\node at (0,.5) {$K$}; \node[scale=1.1] at (-.52,-.4) {$\scriptstyle H$};
%\node[fill=white, circle, inner sep=0] at (-1,.135) {$\scriptscriptstyle \ast$};
%\node[fill=white, circle, inner sep=0] at (1,-.135) {$\scriptscriptstyle \ast$};
%}
\]
Here, the little star is a reminder that $H$ and $K$ are solitons, as opposed to representations.
We will see later, in Corollary~\ref{Corollary 37}, that the actions of $\cA(I_3)$ and $\cA(I_4)$ on $H\boxtimes^{\varphi_1} K$ extend, by strong additivity, to an action of $\cA(I_{34})$.

Let $u_1\in\cA(I_{341})$ be such that $\mathrm{Ad}(u_1)=\varphi_1$.
The unitary
\begin{equation} \label{eq: U_1 and U_2}
%\begin{split}
U_1^{(H,K)}=\big(\id_{H}\boxtimes \rho^K(u_1)\big)\circ\rho^{H\boxtimes K}(u_1)^{-1}:H\boxtimes K\to H\boxtimes^{\varphi_1} K
%\\
%U_2^{(H,K)}&:=\big(\id_{H}\boxtimes \rho^K(u_2)\big)\circ\rho^{H\boxtimes K}(u_2)^{-1}:H\boxtimes K\to H\boxtimes^{\varphi_2} K.
%\end{split}
\end{equation}
that was used in the definition \eqref{eq: the braiding} of the braiding
no longer makes sense when $H$ and $K$ are solitons, because the 
actions of $\cA(I_{34})$ and $\cA(I_1)$ on $H\boxtimes K$ might not extend to an action of $\cA(I_{341})$.
We circumvent this difficulty by a trick that is based on the following lemma:

\begin{lemma}[{\cite[Lem.\,B.24]{BDH-cn3}}]\label{lem: NT between module categories}
Let $R$ be a factor, let $A$ be any von Neumann algebra, and
let
\[
F,G:R\text{\rm -Mod} \to A\text{\rm -Mod}
\]
be completely additive functors.
Let $M\subset R\text{\rm -Mod}$ be a full subcategory with only one object, which is not the zero object.
%that has exactly one object.
%Let also assume that the unique object of $M$ is non-zero.

Then a natural transformation $\tau:F\to G$ is entirely determined by its restriction to $M$.
Conversely, any natural transformation $F|_M\to G|_M$ extends to a natural transformation $F\to G$. %, it is enough to specify it on the subcategory $M$.
\end{lemma}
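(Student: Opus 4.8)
The plan is to exploit the two defining features of the situation: that $F$ and $G$ are \emph{completely additive}, hence commute with arbitrary direct sums, and that a single nonzero module $L$ (the unique object of $M$) generates all of $R\text{-Mod}$ under direct sums and retracts, since $R$ is a factor (the same generation mechanism used in the proof of Lemma~\ref{lem: property of U}). Throughout, write $Q:=\mathrm{End}_R(L)$; unravelling the definitions, a natural transformation $F|_M\to G|_M$ is nothing but a morphism $\sigma:F(L)\to G(L)$ in $A\text{-Mod}$ satisfying $\sigma\circ F(q)=G(q)\circ\sigma$ for every $q\in Q$.

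For the uniqueness statement, I would argue that naturality forces the value of $\tau$ everywhere once $\tau_L$ is known. If $X=\bigoplus_\alpha X_\alpha$, complete additivity identifies $F(X)\cong\bigoplus_\alpha F(X_\alpha)$ and $G(X)\cong\bigoplus_\alpha G(X_\alpha)$, and naturality with respect to the coordinate inclusions and projections forces $\tau_X=\bigoplus_\alpha\tau_{X_\alpha}$. If $X$ is a retract of $Y$, say $p\circ i=\id_X$ with $i:X\to Y$ and $p:Y\to X$, then naturality gives $\tau_X=G(p)\circ\tau_Y\circ F(i)$. Since every $X$ is a retract of some $L^{\oplus\kappa}$, combining these two observations expresses $\tau_X$ in terms of $\tau_L$, which proves uniqueness.

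For the existence statement, I would construct $\tau$ in three stages. First, on a direct sum $L^{\oplus\kappa}$ I set $\tau_{L^{\oplus\kappa}}$ to be the amplification of $\sigma$, under the identifications $F(L^{\oplus\kappa})\cong F(L)^{\oplus\kappa}$ and $G(L^{\oplus\kappa})\cong G(L)^{\oplus\kappa}$; concretely it is characterized by $\tau_{L^{\oplus\kappa}}\circ F(i_\alpha)=G(i_\alpha)\circ\sigma$ for every coordinate inclusion $i_\alpha:L\to L^{\oplus\kappa}$. Second, for general $X$ I choose a retraction $i:X\to L^{\oplus\kappa}$, $p:L^{\oplus\kappa}\to X$ with $p\circ i=\id_X$, and define $\tau_X:=G(p)\circ\tau_{L^{\oplus\kappa}}\circ F(i)$. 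Third, I check that $\tau_X$ is independent of the chosen retraction and that the resulting family is natural; both are short diagram chases that reduce, via the relations $p\circ i=\id$, to the naturality of $\tau$ on morphisms \emph{between} direct sums of copies of $L$. Taking the trivial retraction of $L$ by itself then gives $\tau_L=\sigma$.

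The heart of the argument, and the step I expect to be the main obstacle, is verifying that the amplified family is natural with respect to \emph{every} morphism $f:L^{\oplus\kappa}\to L^{\oplus\lambda}$, not merely the coordinate maps. Here the two hypotheses combine. Writing $i_\alpha,p_\alpha$ for the coordinate maps of the source and $i'_\gamma,p'_\gamma$ for those of the target, equality of bounded maps out of a direct sum may be tested on matrix entries, since $\sum_\alpha F(i_\alpha)F(p_\alpha)=\id$ converges strongly by complete additivity; so it suffices to compare $G(p'_\beta)\,\tau_{L^{\oplus\lambda}}\,F(f)\,F(i_\alpha)$ with $G(p'_\beta)\,G(f)\,\tau_{L^{\oplus\kappa}}\,F(i_\alpha)$. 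Setting $f_{\beta\alpha}:=p'_\beta\circ f\circ i_\alpha\in Q$ and using complete additivity again to expand $F(f\circ i_\alpha)=\sum_\gamma F(i'_\gamma)F(f_{\gamma\alpha})$ strongly, the first expression reduces to $\sigma\circ F(f_{\beta\alpha})$ and the second to $G(f_{\beta\alpha})\circ\sigma$. These agree precisely because $f_{\beta\alpha}\in Q$ and $\sigma$ intertwines the $Q$-actions. Thus the single intertwining relation defining $\sigma$ propagates, through complete additivity, to naturality against all morphisms between amplifications of $L$, and from there to all of $R\text{-Mod}$ via the retract construction.
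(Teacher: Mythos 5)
Your proposal is correct and follows essentially the same route as the paper's (much terser) proof: complete additivity determines and defines the transformation on arbitrary direct sums of the single object of $M$, and the retract presentation of an arbitrary module then handles the general case. You have simply filled in the details the paper leaves implicit, in particular the matrix-entry verification that $Q$-equivariance of $\sigma$ propagates to naturality against all morphisms between amplifications.
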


\begin{proof}
By complete additivity, $\tau|_M$ determines $\tau$ on the subcategory of $R$-modules which are direct sums of the object of $M$.
Every $R$-module is a direct summand of one of the above form, so $\tau$ is determined on all of $R\text{\rm -Mod}$.
(The proof is even simpler when $R$ is a type $\mathrm{III}$ factor as, in that case, $M$ is equivalent to the subcategory of all non-zero modules.)
\end{proof}

Fix $H\in T^-_\cA$ and consider the functors $H\boxtimes -$ and $H\boxtimes^{\varphi_1}-$
from $\Rep(\cA)$ to the category $C$ whose objects are Hilbert spaces equipped with commuting actions of the algebras $\cA(I_3)$ and $\cA(I_4)$.
The definition \eqref{eq: U_1 and U_2} of $U_1^{(H,K)}:H\boxtimes K\to H\boxtimes^{\varphi_1} K$ makes sense in that context, so we get a natural transformation
\begin{equation*} %\label{eq: U_1 nat transf}
U_1^{(H,-)}:\Rep(\cA) \,\tworarrow\, C
\end{equation*}
from
\[
K\mapsto H\boxtimes K\qquad\text{to}\qquad K\mapsto H\boxtimes^{\varphi_1} K.
\]
We now observe that $H\boxtimes -$ and $H\boxtimes^{\varphi_1}-$ also make sense as functors from %the category 
$\cA(I_{34})\text{-Mod}$ to $C$.
Let $M\subset \cA(I_{34})\text{-Mod}$ be the full subcategory consisting of only the vacuum Hilbert space.

\begin{lemma}
The map $U_1^{(H,H_0)}:H\boxtimes H_0\to H\boxtimes^{\varphi_1} H_0$
defined in \eqref{eq: U_1 and U_2}
%(which is the value of \eqref{eq: U_1 nat transf} on $H_0\in\Rep(\cA)$)
is a natural transformation $M \tworarrow C$.
\end{lemma}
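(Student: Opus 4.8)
The plan is to verify the two things that upgrade a single morphism into a natural transformation out of the one-object category $M$: that the component $U_1^{(H,H_0)}$ is a morphism in $C$, i.e. that it intertwines the actions of $\cA(I_3)$ and $\cA(I_4)$; and that it is natural with respect to $\mathrm{End}_M(H_0)=\mathrm{End}_{\cA(I_{34})}(H_0)$. First I would record that $U_1^{(H,H_0)}$ even makes sense: since $H\in T^-_\cA$ and $H_0\in\Rep(\cA)$, Lemma~\ref{lem: T_A and Rep(A) closed under fusion} gives $H\boxtimes H_0\in T^-_\cA$, and because $-1\notin\mathring I_{341}$ the algebra $\cA(I_{341})$ acts on $H\boxtimes H_0$; hence both $\rho^{H_0}(u_1)$ and $\rho^{H\boxtimes H_0}(u_1)$ are defined, and so is the composite defining $U_1^{(H,H_0)}$.

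For the first condition I would simply rerun the computation of Lemma~\ref{lem:  U_1^(H,K) }, restricted to $x\in\cA(I_3)$ and to $x\in\cA(I_4)$. That computation uses only the $\cA(I_{34})$-action on $H$ (available since $-1\notin\mathring I_{34}$), the $\cA(I_{341})$-action on $H\boxtimes H_0$ just noted, and the fact that $H_0$ is a genuine representation, together with the inclusions $\cA(\varphi_1)^{-1}(\cA(I_3))\subset\cA(I_{34})$ and $\cA(\varphi_1)^{-1}(\cA(I_4))\subset\cA(I_1)$. None of these require the partner $H$ to be a representation, so the identities go through verbatim and show that $U_1^{(H,H_0)}$ intertwines both $\cA(I_3)$ and $\cA(I_4)$, i.e. lies in $C$.

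The substantive point is naturality. By Haag duality in the vacuum sector, $\mathrm{End}_{\cA(I_{34})}(H_0)=\rho^{H_0}(\cA(I_{34}))'=\rho^{H_0}(\cA(I_{12}))$, so a morphism of $M$ is $\phi=\rho^{H_0}(a)$ with $a\in\cA(I_{12})$, and the square to be checked is $(\id_H\boxtimes^{\varphi_1}\rho^{H_0}(a))\circ U_1^{(H,H_0)}=U_1^{(H,H_0)}\circ(\id_H\boxtimes\rho^{H_0}(a))$, where the left factor is the image of $\phi$ under $H\boxtimes^{\varphi_1}-$. Expanding $U_1^{(H,H_0)}=(\id_H\boxtimes\rho^{H_0}(u_1))\circ\rho^{H\boxtimes H_0}(u_1)^{-1}$ and using that $\id_H\boxtimes\rho^{H_0}(a)=\rho^{H\boxtimes H_0}(a)$ is the right $R$-action (because $a\in\cA(I_{12})$), the square reduces to the single identity
\[
\rho^{H\boxtimes H_0}(u_1)^{-1}\,\rho^{H\boxtimes H_0}(a)\,\rho^{H\boxtimes H_0}(u_1)=\id_H\boxtimes\rho^{H_0}(u_1^{-1}au_1),
\]
together with the assertion that $u_1^{-1}au_1\in\cA(I_{12})$, so that its action is again the right $R$-action. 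The set of $\phi$ for which the square commutes is closed under composition, adjoints, linear combinations and weak limits, hence a von Neumann subalgebra of $\rho^{H_0}(\cA(I_{12}))$; by strong additivity $\cA(I_{12})=\cA(I_1)\vee\cA(I_2)$, so it suffices to treat $a\in\cA(I_1)$ and $a\in\cA(I_2)$.

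For $a\in\cA(I_2)$ the intervals $I_2$ and $I_{341}$ have disjoint interiors, so $\rho(a)$ and $\rho(u_1)$ commute by locality, $u_1^{-1}au_1=a\in\cA(I_{12})$, and the identity is immediate. The only delicate case is $a\in\cA(I_1)$, where $I_1\subset I_{341}$ meets the support of $\varphi_1$ and $u_1,a$ do \emph{not} commute; this is where I expect the main obstacle. Here I would use $\mathrm{Ad}(u_1)=\cA(\varphi_1)$ to write $u_1^{-1}au_1=\cA(\varphi_1)^{-1}(a)$, and the geometric fact that $\varphi_1^{-1}(I_1)\subset I_1$ (the diffeomorphism $\varphi_1$ drags $1$ to $-i$ while fixing $i$, so its inverse carries $I_1$ into itself); thus $\cA(\varphi_1)^{-1}(a)\in\cA(I_1)\subset\cA(I_{12})$. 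The conjugation identity is then the statement that $\cA(I_{341})$ acts on the soliton $H\boxtimes H_0$ by a $*$-homomorphism, with both sides equal to the right $R$-action of $\cA(\varphi_1)^{-1}(a)$. The crux is precisely this overlapping case: one must check that the $\cA(I_1)$-action coming from the soliton structure on $H\boxtimes H_0$ agrees with the one coming from the right $R$-action used in $\id_H\boxtimes\rho^{H_0}(-)$, and that $\varphi_1$ is chosen so that $\varphi_1^{-1}$ preserves $I_1$.
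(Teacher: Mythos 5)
Your proposal is correct and follows essentially the same route as the paper: identify $\mathrm{End}_M(H_0)$ with $\cA(I_{12})$ via Haag duality, then reduce the naturality square to the identity $\rho^{H\boxtimes H_0}(u_1^{-1}au_1)=\id_H\boxtimes\rho^{H_0}(u_1^{-1}au_1)$, which holds because $u_1^{-1}au_1=\cA(\varphi_1)^{-1}(a)$ lies in $\cA(I_{12})$ and hence acts through the $H_0$ factor --- this is exactly the computation \eqref{eq: long computation 2} that the paper cites. Your strong-additivity case split into $\cA(I_1)$ and $\cA(I_2)$ is harmless but unnecessary (the computation works uniformly on $\cA(I_{12})$), and the geometric condition $\varphi_1^{-1}(I_1)\subset I_1$ you flag is automatic, since $\varphi_1$ is a monotone self-map of $I_{341}$ fixing $i$ and sending $1\mapsto -i$, so $\varphi_1(I_1)=I_{41}\supset I_1$.
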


\begin{proof}
By Haag duality, $\mathrm{End}_M(H_0)=\mathrm{End}_{\cA(I_{34})\text{-Mod}}(H_0)=\cA(I_{12})$.
For every endomorphism $x\in \cA(I_{12})$ of $H_0$, we need to show that the diagram 
\[
\tikz{
\node(a) at (0,0) {$H\boxtimes H_0$};
\node(b) at (4,0) {$H\boxtimes^{\varphi_1} H_0$};
\node(c) at (0,-1.4) {$H\boxtimes H_0$};
\node(d) at (4,-1.4) {$H\boxtimes^{\varphi_1} H_0$};
\draw[->] (a) --node[above]{$\scriptstyle U_1^{(H,H_0)}$} (b);
\draw[->] (a) --node[left]{$\scriptstyle \id\,\boxtimes\, x$} (c);
\draw[->] (b) --node[right]{$\scriptstyle \id\,\boxtimes^{\varphi_1}\, x$} (d);
\draw[->] (c) --node[above]{$\scriptstyle U_1^{(H,H_0)}$} (d);
}
\]
commutes.
That computation was performed in \eqref{eq: long computation 2}.
\end{proof}

%$U\circ \rho^{H\boxtimes K}(x)=\rho^{H\boxtimes^\varphi K}(x)\circ U$ holds for every $x\in \cA(I)$ and every $I\subset S^1$.
%By strong additivity, it is enough to check this for elements $x\in \cA(I_3)$ and $x\in \cA(I_{412})$.

The category $C$ is of the form $A$-Mod for some von Neumann algebra (\cite[\S8]{MR0201989}).
By Lemma~\ref{lem: NT between module categories}, we therefore get:

\begin{corollary}\label{cor: NT whose value on the vacuum sector}
There exists a unique natural transformation
\begin{equation}\label{eq: NT defined on A(I_34)-Mod}
U_1^{(H,-)}:\cA(I_{34})\text{\rm -Mod} \,\tworarrow\; C
\end{equation}
whose value on the vacuum sector $H_0\in\cA(I_{34})\text{\rm -Mod}$ is given by the map \eqref{eq: U_1 and U_2}.
\end{corollary}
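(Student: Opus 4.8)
The plan is to deduce the corollary directly from Lemma~\ref{lem: NT between module categories}, applied with $R:=\cA(I_{34})$ and with $A$ taken to be the von Neumann algebra for which $C=A\text{-Mod}$, whose existence was recorded just above. First I would check that $R=\cA(I_{34})$ meets the factoriality hypothesis of that lemma: this is immediate, since we assume throughout that $\cA$ is irreducible, so every algebra $\cA(I)$ is a factor, and in particular $\cA(I_{34})$ is.

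The one substantive point to verify is that the two functors
\[
H\boxtimes -,\quad H\boxtimes^{\varphi_1}-:\cA(I_{34})\text{-Mod}\to C
\]
are completely additive. For $H\boxtimes -$ this holds because Connes fusion with the fixed object $H$ commutes with arbitrary orthogonal direct sums in the second variable, hence sends a direct sum of $\cA(I_{34})$-modules to the corresponding direct sum in $C$. For $H\boxtimes^{\varphi_1}-$, observe that the twisting functor $K\mapsto {}^{\varphi_1\!}K$ merely precomposes the $\cA(I_{34})$-action with the automorphism $\cA(\varphi_1)$, so it is an isomorphism of categories and in particular completely additive; postcomposing it with the completely additive functor $H\boxtimes -$ again yields a completely additive functor. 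This complete-additivity check is the only genuine verification the argument requires, and I expect it to be essentially formal rather than a real obstacle.

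With these hypotheses in place, the corollary follows at once. The full subcategory $M\subset\cA(I_{34})\text{-Mod}$ on the single object $H_0$ has a nonzero object, as demanded by Lemma~\ref{lem: NT between module categories}, and the preceding lemma exhibits $U_1^{(H,H_0)}$ as a natural transformation $(H\boxtimes -)|_M\to(H\boxtimes^{\varphi_1}-)|_M$. The converse half of Lemma~\ref{lem: NT between module categories} then extends $U_1^{(H,H_0)}$ to a natural transformation $U_1^{(H,-)}$ defined on all of $\cA(I_{34})\text{-Mod}$, while the first half of that lemma guarantees that an extension is determined by its restriction to $M$, hence that the extension with the prescribed value \eqref{eq: U_1 and U_2} on $H_0$ is unique. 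This is precisely the assertion of the corollary.
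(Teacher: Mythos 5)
Your proposal is correct and follows the paper's own route: the paper likewise derives the corollary by observing that $C$ is of the form $A\text{-Mod}$ for some von Neumann algebra and invoking Lemma~\ref{lem: NT between module categories}, with the preceding lemma supplying the natural transformation on the one-object subcategory $M=\{H_0\}$; your added checks of factoriality and complete additivity are exactly the hypotheses the paper leaves implicit. One small quibble: $\cA(\varphi_1):\cA(I_{34})\to\cA(I_3)$ is an injective but non-surjective endomorphism of $\cA(I_{34})$, not an automorphism, so the twisting functor $K\mapsto {}^{\varphi_1}K$ need not be an isomorphism of categories --- but since it leaves the underlying Hilbert space unchanged it is still manifestly completely additive, and your argument is unaffected.
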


We now have two definitions of $U_1^{(H,-)}$ that we need to reconcile:

\begin{lemma}\label{lem: reconciliation}
Let $H\in T^-_\cA$ and $K\in \cA(I_{341})${\rm -Mod}. Then the map $U_1^{(H,K)}:H\boxtimes K\to H\boxtimes^{\varphi_1}K$ 
defined by \eqref{eq: U_1 and U_2}
agrees with the one given by Corollary~\ref{cor: NT whose value on the vacuum sector}.
\end{lemma}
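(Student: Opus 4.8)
The plan is to recognise the two descriptions of $U_1^{(H,K)}$ as natural transformations on $\cA(I_{341})\text{-Mod}$ that agree on the vacuum sector, and then to force them to coincide by the uniqueness clause of Lemma~\ref{lem: NT between module categories}, now applied over the factor $\cA(I_{341})$ in place of $\cA(I_{34})$. Restricting along $\cA(I_{341})\text{-Mod}\to\cA(I_{34})\text{-Mod}$, the transformation of Corollary~\ref{cor: NT whose value on the vacuum sector} yields a natural transformation $\tau$ between the completely additive functors $H\boxtimes-$ and $H\boxtimes^{\varphi_1}-$, viewed on $\cA(I_{341})\text{-Mod}$. On the same pair of functors, the explicit formula \eqref{eq: U_1 and U_2} assigns to each $K\in\cA(I_{341})\text{-Mod}$ a map $D_K:H\boxtimes K\to H\boxtimes^{\varphi_1}K$. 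It then suffices to prove $\tau=D$.

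First I would check that $D$ is genuinely defined and natural on $\cA(I_{341})\text{-Mod}$. The operator $\rho^{H\boxtimes K}(u_1)$ in \eqref{eq: U_1 and U_2} requires an action of $\cA(I_{341})$ on $H\boxtimes K$: this is available because $H\in T^-_\cA$ satisfies condition (b) (Lemma~\ref{lem: essential images}) while $K$, being a genuine $\cA(I_{341})$-module, carries the action of $\cA(I_{41})\subset\cA(I_{341})$, so that $H\boxtimes K$ satisfies condition (b) by the same fusion argument as in Lemma~\ref{lem: T_A and Rep(A) closed under fusion}; together with the action of $\cA(I_{34})$ coming from $H$, strong additivity ($\cA(I_{341})=\cA(I_{34})\vee\cA(I_{41})$) produces the desired action. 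Given an $\cA(I_{341})$-linear map $f:K\to K'$, the induced map $\id_H\boxtimes f$ is $\cA(I_{341})$-linear on the fusion, hence intertwines $\rho^{H\boxtimes K}(u_1)$ with $\rho^{H\boxtimes K'}(u_1)$, while $f$ intertwines $\rho^K(u_1)$ with $\rho^{K'}(u_1)$; feeding these two facts into \eqref{eq: U_1 and U_2} exactly as in the computation \eqref{eq: long computation 2} gives $D_{K'}\circ(\id_H\boxtimes f)=(\id_H\boxtimes^{\varphi_1}f)\circ D_K$, which is the naturality of $D$.

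Finally, $\tau$ and $D$ agree on the vacuum sector: by construction $\tau_{H_0}$ is the value prescribed in Corollary~\ref{cor: NT whose value on the vacuum sector}, namely the map \eqref{eq: U_1 and U_2} evaluated at $H_0$, which is precisely $D_{H_0}$. Since $\cA(I_{341})$ is a factor and $H_0$ is a nonzero $\cA(I_{341})$-module, the full subcategory on the single object $H_0$ meets the hypotheses of Lemma~\ref{lem: NT between module categories}; that lemma forces a natural transformation between completely additive functors out of $\cA(I_{341})\text{-Mod}$ to be determined by its value on $H_0$. As $\tau$ and $D$ share that value, we conclude $\tau=D$, which is the asserted reconciliation. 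The main obstacle is the second paragraph: one must be sure that \eqref{eq: U_1 and U_2} is literally defined for every $K\in\cA(I_{341})\text{-Mod}$---that is, that $\cA(I_{341})$ really acts on $H\boxtimes K$---and that $\id_H\boxtimes f$ is $\cA(I_{341})$-linear, since only then does the intertwining computation, and hence the naturality of $D$, go through; once $D$ is known to be a natural transformation, the uniqueness step is immediate.
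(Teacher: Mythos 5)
Your proof is correct and is essentially the paper's own argument: the paper likewise observes that the two definitions agree on the vacuum sector and then concludes by noting that the subcategory of $\cA(I_{341})$-modules on which they agree is closed under direct sums and direct summands, which is exactly the content of the uniqueness clause of Lemma~\ref{lem: NT between module categories} that you invoke over the factor $\cA(I_{341})$. Your second paragraph (checking that \eqref{eq: U_1 and U_2} is defined and natural on all of $\cA(I_{341})$-Mod) makes explicit what the paper leaves implicit, and is a welcome addition even though the appeal to condition~(b) is slightly off target there, since $K$ is only an $\cA(I_{341})$-module rather than an $R$-$R$-bimodule.
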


\begin{proof}
Let $M\subset \cA(I_{341})$-Mod be the subcategory on which the two definitions of $U_1^{(H,K)}$ agree.
By definition, $M$ contains the vacuum sector.
Since $M$ is closed under direct sums and direct summands and
the vacuum sector generates $\cA(I_{341})$-Mod under those operations, $M=\cA(I_{341})$-Mod.
\end{proof}

We now restrict the natural transformation \eqref{eq: NT defined on A(I_34)-Mod} along the functor $T^+_\cA\to \cA(I_{34})\text{-Mod}$, to get a natural transformation
%\begin{equation*} %\label{eq: NT defined on T^+_A}
$U_1^{(H,-)}:T^+_\cA \,\tworarrow\, C$ % \mathsf{Hilb}
%\end{equation*}
from
$H\boxtimes -$ to $H\boxtimes^{\varphi_1} -$.

\begin{lemma}
Let $H\in T^-_\cA$ and $K\in T^+_\cA$ be solitons.
Then the map
\begin{equation}\label{fvjnsljnbsjbn}
U_1^{(H,K)}:H\boxtimes K\to H\boxtimes^{\varphi_1} K
\end{equation}
defined above intertwines the actions of $\cA(I_{12})$, $\cA(I_3)$, and $\cA(I_4)$.
\end{lemma}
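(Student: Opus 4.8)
The plan is to extract the statement from the way $U_1^{(H,-)}$ was manufactured in Corollary~\ref{cor: NT whose value on the vacuum sector}, namely as a natural transformation $U_1^{(H,-)}:\cA(I_{34})\text{-Mod}\,\tworarrow\,C$ between the two functors $K\mapsto H\boxtimes K$ and $K\mapsto H\boxtimes^{\varphi_1}K$, restricted along $T^+_\cA\to\cA(I_{34})\text{-Mod}$. The guiding principle is that the intertwining of $\cA(I_3)$ and $\cA(I_4)$ is built into the target category $C$, while the intertwining of $\cA(I_{12})$ will come out of naturality.

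First I would dispose of $\cA(I_3)$ and $\cA(I_4)$. By definition $C$ is the category of Hilbert spaces equipped with commuting $\cA(I_3)$- and $\cA(I_4)$-actions, and both $H\boxtimes-$ and $H\boxtimes^{\varphi_1}-$ are valued in $C$; their $C$-structures are exactly the $\cA(I_3)$- and $\cA(I_4)$-actions described before the statement (on $H\boxtimes^{\varphi_1}K$ the algebra $\cA(I_4)$ acts through $K$ and $\cA(I_3)$ acts through $H$ via $\cA(\varphi_1)^{-1}$, while on $H\boxtimes K$ both act through $H$ as part of the $\cA(I_{34})$-action). Any component of a natural transformation between $C$-valued functors is, tautologically, a morphism in $C$, i.e. a map intertwining the $\cA(I_3)$- and $\cA(I_4)$-actions of its source and target. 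Hence $U_1^{(H,K)}$ intertwines $\cA(I_3)$ and $\cA(I_4)$; note that this already carries real content, since it says $U_1^{(H,K)}$ transports the $\cA(I_4)$-action living on the $H$-side of $H\boxtimes K$ to the one living on the $K$-side of $H\boxtimes^{\varphi_1}K$.

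It remains to treat $\cA(I_{12})$, where I would invoke naturality. Fix $x\in\cA(I_{12})$. Because $I_{12}$ and $I_{34}$ have disjoint interiors, locality (Definition~\ref{Def:CN}.{\it i}) shows that $\rho^K(x)$ commutes with the $\cA(I_{34})$-action on $K$, so $\rho^K(x)$ is an endomorphism of $K$ in $\cA(I_{34})\text{-Mod}$; and because $I_{12}$ and $I_3$ also have disjoint interiors, $\rho^K(x)$ commutes with the twisted action on ${}^{\varphi_1}K$ (whose image is $\cA(I_3)$), so it is an endomorphism of ${}^{\varphi_1}K$ too. Feeding this endomorphism through the two functors produces $\id_H\boxtimes\rho^K(x)$ in both cases, which is precisely the $\cA(I_{12})$-action of $x$ on $H\boxtimes K$ and on $H\boxtimes^{\varphi_1}K$ (on both spaces $\cA(I_{12})$ acts through $K$). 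Naturality of $U_1^{(H,-)}$ applied to the morphism $\rho^K(x):K\to K$ then yields
\[
U_1^{(H,K)}\circ(\id_H\boxtimes\rho^K(x))=(\id_H\boxtimes\rho^K(x))\circ U_1^{(H,K)},
\]
which is exactly the intertwining relation for $\cA(I_{12})$.

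The part that deserves care --- and where I expect the only friction, though it is bookkeeping rather than a genuine obstacle --- is checking that the images of $\rho^K(x)$ under the two functors really coincide with the prescribed $\cA(I_{12})$-actions, and that $\rho^K(x)$ is a bona fide morphism for both the untwisted and the twisted $\cA(I_{34})$-module structures. All of this reduces to two applications of locality, for the interval pairs $(I_{12},I_{34})$ and $(I_{12},I_3)$. I would emphasise that formula \eqref{eq: U_1 and U_2} is unavailable here, since it is meaningless for $K\in T^+_\cA$ (the algebra $\cA(I_{341})$ does not act on such $K$); the whole argument runs through the natural-transformation characterisation of $U_1^{(H,K)}$, and Lemma~\ref{lem: reconciliation} plays no role.
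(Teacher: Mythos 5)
Your proposal is correct and follows essentially the same route as the paper: the $\cA(I_3)$- and $\cA(I_4)$-intertwining is immediate because $U_1^{(H,K)}$ is a morphism in $C$, and the $\cA(I_{12})$-intertwining follows from naturality of $U_1^{(H,-)}$ on $\cA(I_{34})\text{-Mod}$ applied to $\rho^K(x)$, which is an $\cA(I_{34})$-module endomorphism of $K$ by locality. The only (harmless) redundancy is your separate check for the twisted structure ${}^{\varphi_1}K$, which already follows from $\rho^K(x)$ commuting with all of $\cA(I_{34})$.
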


\begin{proof}
The map \eqref{fvjnsljnbsjbn} intertwines the actions of $\cA(I_3)$ and $\cA(I_4)$ because it is a morphism in $C$.
Recall from \eqref{eq: NT defined on A(I_34)-Mod}
that $U_1^{(H,-)}$ is natural with respect to all morphisms of $\cA(I_{34})$-modules.
So the map $U_1^{(H,K)}:H\boxtimes K\to H\boxtimes^{\varphi_1} K$ intertwines the two actions of $\mathrm{End}_{\cA(I_{34})\text{-Mod}}(K)$.
The actions of $\cA(I_{12})$ on the source and on the target of \eqref{fvjnsljnbsjbn} factor through the aforementioned actions of $\mathrm{End}_{\cA(I_{34})\text{-Mod}}(K)$.
The map \eqref{fvjnsljnbsjbn} therefore intertwines the actions of $\cA(I_{12})$.
\end{proof}

\begin{corollary}\label{Corollary 37}
The actions of $\cA(I_3)$ and of $\cA(I_4)$ on $H\boxtimes^{\varphi_1} K$ extend, by strong additivity, to an action of $\cA(I_{34})$.
\end{corollary}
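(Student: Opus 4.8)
The plan is to obtain the corollary from the preceding lemma by transporting, along the isomorphism $U_1^{(H,K)}$, a genuine $\cA(I_{34})$-action that is already present on the untwisted fusion $H\boxtimes K$.

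First I would note that $H\boxtimes K$ carries an honest action of $\cA(I_{34})$. Since $H\in T^-_\cA$ and the point $-1$ lies on the boundary (not the interior) of $I_{34}$, the soliton $H$ has a genuine action of $\cA(I_{34})$; this is precisely the ``outer'' left action of $H$ that survives the Connes fusion (the fusion being taken over the actions that $H$ and $K$ attach to the glued arc), so it gives an action $\sigma\colon\cA(I_{34})\to\B(H\boxtimes K)$. Moreover, by the very construction of the functor $H\boxtimes-\colon\cA(I_{34})\text{-Mod}\to C$, the $\cA(I_3)$- and $\cA(I_4)$-actions that exhibit $H\boxtimes K$ as an object of $C$ are the restrictions of $\sigma$ to the subalgebras $\cA(I_3),\cA(I_4)\subset\cA(I_{34})$.

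Next, the preceding lemma tells us that $U_1^{(H,K)}\colon H\boxtimes K\to H\boxtimes^{\varphi_1}K$ is a unitary isomorphism intertwining the actions of $\cA(I_3)$ and of $\cA(I_4)$. I would therefore define a normal $*$-homomorphism $\cA(I_{34})\to\B(H\boxtimes^{\varphi_1}K)$ by $a\mapsto U_1^{(H,K)}\,\sigma(a)\,(U_1^{(H,K)})^{-1}$. For $a\in\cA(I_3)$ (resp.\ $a\in\cA(I_4)$), the intertwining property turns the right-hand side into the prescribed action of $a$ on $H\boxtimes^{\varphi_1}K$; hence this conjugated action is an extension of the two given actions. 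Uniqueness is exactly where strong additivity is invoked: because $\cA(I_{34})=\cA(I_3)\vee\cA(I_4)$, any normal action of $\cA(I_{34})$ restricting to the prescribed $\cA(I_3)$- and $\cA(I_4)$-actions is determined by those restrictions, so the extension is canonical.

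The step that I expect to need the most care is the identification in the second paragraph: one must check that the $\cA(I_3)$- and $\cA(I_4)$-actions appearing in the intertwining statement really are the restrictions of the \emph{single} action $\sigma$ coming from $H$, and not two a priori unrelated actions (note that on the target $H\boxtimes^{\varphi_1}K$ the algebra $\cA(I_4)$ acts through $K$ while $\cA(I_3)$ acts through $H$). This is a bookkeeping matter, settled by tracking which tensor factor supplies each action through the $C$-valued functor $H\boxtimes-$, but it is the only place where the mixed nature of the fusion ($H\in T^-_\cA$, $K\in T^+_\cA$) enters.
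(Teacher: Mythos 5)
Your argument is correct and is precisely the intended one: the paper states this as an immediate corollary of the preceding lemma, leaving implicit exactly the transport-of-structure argument you spell out (the $\cA(I_3)$- and $\cA(I_4)$-actions on $H\boxtimes K$ are restrictions of the single $\cA(I_{34})$-action inherited from the left action of $H$, and conjugating that action by the unitary $U_1^{(H,K)}$ yields the required extension on $H\boxtimes^{\varphi_1}K$, unique by strong additivity). Your closing caveat about tracking which tensor factor supplies each action is also the right point to be careful about, and you resolve it correctly.
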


%We recapitulate what we have achieved so far.
Given two solitons $H\in T^-_\cA$ and $K\in T^+_\cA$,
we have upgraded $H\boxtimes^{\varphi_1} K$ to an object of $\Bim(R)$, and we have
made sense of the unitary $U_1^{(H,K)}:H\boxtimes K\to H\boxtimes^{\varphi_1} K$.
Similarly, given a diffeomorphism $\varphi_2:I_{234}\to I_{234}$ as in Section~\ref{sec: the braiding},
we can define $K\boxtimes^{\varphi_2} H$ and make sense of
$U_2^{(K,H)}:K\boxtimes H\to K\boxtimes^{\varphi_2} H$.
%We are finally in a position to adapt the definition of the braiding (Definition~\ref{def: braiding}) to the case of solitons:

\begin{definition}\label{def: soliton braiding}
Let $H\in T^-_\cA$ and $K\in T^+_\cA$ be solitons.
The braiding isomorphism $\beta_{H,K}:H\boxtimes K\to K\boxtimes H$
is the composite
\begin{equation}\label{eq: the braiding '}
\begin{split}
\beta_{H,K}\,:\,\,H \boxtimes K\,\cong\,\,\,& H\boxtimes K \boxtimes H_0\\
\xrightarrow{\,\,U_1 \,{\scriptscriptstyle \boxtimes}\, \id\,\,}
\,\,& H\boxtimes^{\varphi_1}  K \boxtimes H_0\\
\xrightarrow{\,\,\id \,{\scriptscriptstyle \boxtimes}\, U_2\,\,}
\,\,& H\boxtimes^{\varphi_1} (K \boxtimes^{\varphi_2} H_0) \,\cong\, K \boxtimes^{\varphi_2} (H\boxtimes^{\varphi_1} H_0)\\
\xrightarrow{\,\id \,{\scriptscriptstyle \boxtimes}\,U_1^{-1}\!}
\,\,& K \boxtimes^{\varphi_2}  (H\boxtimes H_0)\\
\xrightarrow{U_2^{-1} \,{\scriptscriptstyle \boxtimes}\, \id}
\,\,& K \boxtimes (H\boxtimes H_0) \,\cong\, K \boxtimes H,
\end{split}
\end{equation}
where $H_0$ denotes the vacuum sector.
\end{definition}

We represent this isomorphism graphically as follows:
\begin{equation*} %\label{pic: the braiding}
\beta_{H,K}\,:\,\,\tikzmath[scale=.55]{\draw (0,0) circle (1) (-1,0) -- (1,0); \node at (0,-.5) {$\scriptstyle H$};\node at (0,.5) {$\scriptstyle K$};
\node[fill=white, circle, inner sep=-.11] at (-1,-.15) {$\scriptscriptstyle \ast$};
\node[fill=white, circle, inner sep=-.11] at (1,.15) {$\scriptscriptstyle \ast$};
}
\,\,\cong\,\,
\tikzmath[scale=.55]{
\draw (-1,0) arc (180:0: 1 and 1.4) (175: 1 and -1.4) arc (175:0: 1 and -1.4);
\node at (0,-.9) {$\scriptstyle H$};\node at (0,0) {$\scriptstyle K$};\node at (0,.9) {$\scriptstyle H_0$};
\node[fill=white, circle, inner sep=-.11] at (-1.01,-.21) {$\scriptscriptstyle \ast$};
\node[fill=white, circle, inner sep=-.11] at (1,0) {$\scriptscriptstyle \ast$};
\draw(-1,0) to[in=180-50, out =65](1,.14)(-1,0) to[in=180+50, out =-65](1,-.14);
}
\,\,\to\,\,
\tikzmath[scale=.55, xscale=-1]{\useasboundingbox (1,1.4) rectangle (-1,-1.4);
\draw (-1,0) arc (180:0: 1 and 1.4);
\node[scale=.9] at (.47,-.78) {$\scriptstyle H$};\node at (0,0) {$\scriptstyle K$};\node at (0,.9) {$\scriptstyle H_0$};
\pgftransformxscale{-1}
\node[fill=white, circle, inner sep=-.11] at (-1.01,-.21) {$\scriptscriptstyle \ast$};
\node[fill=white, circle, inner sep=-.11] at (1,0) {$\scriptscriptstyle \ast$};
\draw(-1,0) to[in=180-45, out =60](1,.14)(-1,0) to[in=180+45, out =-60](1,-.14);
\draw (0,-.53) to[in=-80, out =-98, looseness=2.5](-.975,-.35);}
\,\,\to\,\,
\tikzmath[scale=.55, xscale=-1]{\useasboundingbox (1,1.4) rectangle (-1,-1.4);
\draw (-1,0) arc (180:0:1) -- (-1,0); \draw (0,0) to[in=-90, out =-87, looseness=2.8](1,0); \draw (0,0) to[in=-90, out =-93, looseness=2.8](-1,0);
\node at (0,.5) {$\scriptstyle H_0$}; \node[scale=.9] at (.52,-.4) {$\scriptstyle H$}; \node[scale=.9] at (-.52,-.4) {$\scriptstyle K$};
\node[fill=white, circle, inner sep=-.11] at (-1,-.15) {$\scriptscriptstyle \ast$};
\node[fill=white, circle, inner sep=-.11] at (1,-.15) {$\scriptscriptstyle \ast$};
}
\,\,\to\,\,
\tikzmath[scale=.55, xscale=-1]{\useasboundingbox (1,1.4) rectangle (-1,-1.4);
\draw (-1,0) arc (180:0: 1 and 1.4);
\node[scale=.9] at (-.45,-.78) {$\scriptstyle K$};\node at (0,0) {$\scriptstyle H$};\node at (0,.9) {$\scriptstyle H_0$};
\node[fill=white, circle, inner sep=-.11] at (-1.01,-.21) {$\scriptscriptstyle \ast$};
\node[fill=white, circle, inner sep=-.11] at (1,0) {$\scriptscriptstyle \ast$};
\draw(-1,0) to[in=180-45, out =60](1,.14)(-1,0) to[in=180+45, out =-60](1,-.14);
\draw (0,-.53) to[in=-80, out =-98, looseness=2.5](-.975,-.35);}
\,\,\to\,\,
\tikzmath[scale=.55, xscale=-1]{
\draw (-1,0) arc (180:0: 1 and 1.4) (175: 1 and -1.4) arc (175:0: 1 and -1.4);
\node at (0,-.9) {$\scriptstyle K$};\node at (0,0) {$\scriptstyle H$};\node at (0,.9) {$\scriptstyle H_0$};
\node[fill=white, circle, inner sep=-.11] at (-1.01,-.21) {$\scriptscriptstyle \ast$};
\node[fill=white, circle, inner sep=-.11] at (1,0) {$\scriptscriptstyle \ast$};
\draw(-1,0) to[in=180-50, out =65](1,.14)(-1,0) to[in=180+50, out =-65](1,-.14);
}
\,\,\cong\,\,
\tikzmath[scale=.55]{\draw (0,0) circle (1) (-1,0) -- (1,0); \node at (0,-.5) {$\scriptstyle K$};\node at (0,.5) {$\scriptstyle H$};
\node[fill=white, circle, inner sep=-.11] at (-1,.15) {$\scriptscriptstyle \ast$};
\node[fill=white, circle, inner sep=-.11] at (1,-.15) {$\scriptscriptstyle \ast$};
}
\end{equation*}

\noindent
We have the following analogs of Lemma~\ref{lem: property of U} and Lemma~\ref{lem: cocycle property of U}:

\begin{lemma}\label{lem: property of U '}
Let $H\in T^-_\cA$ and $K,L\in T^+_\cA$ be solitons.
Then the following diagram is commutative:
\begin{equation}\label{eq: property of U '}
\tikzmath {
\node(1) at(0,1.4) {$(H\boxtimes K)\boxtimes L$};
\node(2) at(5.5,1.4) {$(H\boxtimes^{\varphi_1} K)\boxtimes L$};
\node(3) at(0,0) {$H\boxtimes (K\boxtimes L)$};
\node(4) at(5.5,0) {$H\boxtimes^{\varphi_1} (K\boxtimes L)$.\!};
\draw[->] (1) --node[above]{$\scriptstyle U_1^{(H,K)}\;\!\boxtimes\;\! \id_L$} (2);
\draw[->] (1) --node[left]{$\scriptstyle a$} (3);
\draw[->] (2) --node[right]{$\scriptstyle a$} (4);
\draw[->] (3) --node[above]{$\scriptstyle U_1^{(H,K\boxtimes L)}$} (4);  %
}
\end{equation}
A similarly diagram holds for $U_2$.
\end{lemma}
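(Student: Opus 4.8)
The plan is to adapt the proof of Lemma~\ref{lem: property of U} to the soliton setting. The one genuinely new point is that, for solitons, $U_1^{(H,K)}$ and $U_1^{(H,K\boxtimes L)}$ are the maps produced by the extension procedure of Corollary~\ref{cor: NT whose value on the vacuum sector}, rather than by the naive formula \eqref{eq: U_1 and U_2}. I would therefore let $L$ range not merely over $T^+_\cA$ but over all of $\cA(I_{34})\text{-Mod}$, observing that every corner and every edge of \eqref{eq: property of U '} is defined, and depends on $L$, only through its left $\cA(I_{34})$-module structure. Writing $R:=\cA(I_{34})$, the four corners are the values at $L$ of the completely additive functors $(H\boxtimes K)\boxtimes_R-$, $(H\boxtimes^{\varphi_1}K)\boxtimes_R-$, $H\boxtimes_R(K\boxtimes_R-)$ and $H\boxtimes^{\varphi_1}(K\boxtimes_R-)$ from $\cA(I_{34})\text{-Mod}$ to the category $C$.

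First I would verify that each of the four edges is a natural transformation in $L$. The two associators are natural by the very definition of the associator of Connes fusion. The top edge $U_1^{(H,K)}\boxtimes_R\id_L$ is functorial in $L$ because $U_1^{(H,K)}$ intertwines the $\cA(I_{12})$-action (as established just before Corollary~\ref{Corollary 37}), hence is a map of right $R$-modules and may be fused with $\id_L$. The bottom edge I would exhibit as the whiskering of the natural transformation $U_1^{(H,-)}$ from Corollary~\ref{cor: NT whose value on the vacuum sector} along the completely additive endofunctor $K\boxtimes_R-$ of $\cA(I_{34})\text{-Mod}$, where $K\boxtimes_R L$ carries the left $\cA(I_{34})$-action inherited from $K$ --- precisely the one used to form $H\boxtimes(K\boxtimes L)$; naturality in $L$ is then automatic. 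It follows that the full subcategory $M\subseteq\cA(I_{34})\text{-Mod}$ of objects $L$ for which \eqref{eq: property of U '} commutes is closed under direct sums and direct summands. To finish, I would place the vacuum sector $H_0$ in $M$: exactly as in Lemma~\ref{lem: property of U}, the right-unit isomorphisms $-\boxtimes H_0\cong-$ together with the naturality of $U_1^{(H,-)}$ under $K\boxtimes H_0\cong K$ identify both horizontal arrows with $U_1^{(H,K)}$, so the square commutes at $H_0$. Since $H_0$ generates $\cA(I_{34})\text{-Mod}$ under direct sums and direct summands (indeed $\cA(I_{34})$ is a type~$\mathrm{III}$ factor, so every nonzero separable module is isomorphic to $H_0$), we obtain $M=\cA(I_{34})\text{-Mod}$, and in particular the square commutes for every $L\in T^+_\cA$.

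The main obstacle is the naturality of the bottom edge: one must check that evaluating the transformation $U_1^{(H,-)}$ on the object $K\boxtimes_R L$ really yields the map $U_1^{(H,K\boxtimes L)}$ of the diagram. This reduces to confirming that $K\boxtimes_R-$ equips its values with the correct $\cA(I_{34})$-module structure and then invoking the uniqueness clause of Lemma~\ref{lem: NT between module categories}, whose defining property on $H_0$ is exactly the computation \eqref{eq: long computation 2} (cf.\ Lemma~\ref{lem: reconciliation}). The corresponding statement for $U_2$ then follows by the identical argument, with $\varphi_1$ and the point $1$ replaced throughout by $\varphi_2$ and the point $-1$.
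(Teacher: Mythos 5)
Your proof is correct and follows essentially the same route as the paper: reduce to the statement that the diagram depends on $L$ only through its $\cA(I_{34})$-module structure (via Corollary~\ref{cor: NT whose value on the vacuum sector}), check commutativity at the vacuum sector where both horizontal arrows become $U_1^{(H,K)}$, and conclude by closure under direct sums and summands since $H_0$ generates $\cA(I_{34})$-Mod. The extra care you take in verifying naturality of each edge (in particular exhibiting the bottom edge as the whiskering of $U_1^{(H,-)}$ along $K\boxtimes_R-$) is exactly the content the paper compresses into its citation of that corollary.
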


\begin{proof}
By Corollary~\ref{cor: NT whose value on the vacuum sector}, the maps in the above diagram only depend on $L$ as an $\cA(I_{34})$-module.
We can therefore proceed as in Lemma~\ref{lem: property of U}.
Let $M:=\{L\in \cA(I_{34})$-Mod $:$ \eqref{eq: property of U '} holds$\}$.
If $L=H_0$, then the horizontal arrows in \eqref{eq: property of U '} can be both identified with $U_1^{(H,K)}$.
So $M$ contains the vacuum sector. % as, in that case, both horizontal arrows in \eqref{eq: property of U '} can be identified with $U_1^{(H,K)}$.
So $M$ is all of $\cA(I_{34})$-Mod.
\end{proof}

\begin{lemma}\label{lem: cocycle property of U '}
Let $H, K\in T^-_\cA$, and $L\in T^+_\cA$ be solitons.
Then the following diagram is commutative:
\begin{equation}\label{eq: cocycle of U '}
\qquad\tikzmath {
\node(1) at(0,1.5) {$H\boxtimes K\boxtimes L$};
\node(2) at(5.5,1.5) {$H\boxtimes^{\varphi_1} (K\boxtimes L)$};
\node(3) at(0,0) {$(H\boxtimes K)\boxtimes^{\varphi_1} L$};
\node(4) at(5.5,0) {$H\boxtimes^{\varphi_1} (K\boxtimes^{\varphi_1} L)$,\!};
\draw[->] (1) --node[above]{$\scriptstyle %U_1^{(H,K)}\;\!\boxtimes\;\! \id_{L}\;=\;
U_1^{(H,K\boxtimes L)}$} (2);
\draw[->] (1) --node[left]{$\scriptstyle U_1^{(H\boxtimes K,L)}$} (3);
\draw[->] (2) --node[right]{$\scriptstyle \id_H\;\!\boxtimes\;\! U_1^{(K,L)}$} (4);
\draw[->] (3) --node[above]{$\scriptstyle a$} (4);  %
}
\end{equation}
where the top horizontal arrow is the one constructed in \eqref{eq: NT defined on A(I_34)-Mod}.
A similarly diagram holds for $U_2$.
\end{lemma}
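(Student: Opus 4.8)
The plan is to imitate the reduction used for Lemma~\ref{lem: property of U '}, which replaces the direct cocycle computation by a naturality-plus-generation argument. The direct computation is unavailable here: for a general $L\in T^+_\cA$, neither $L$ nor $K\boxtimes L$ need carry an action of $\cA(I_{341})$ (the point $1$ lies in the interior of $I_{341}$), so the explicit formula \eqref{eq: U_1 and U_2} is meaningless. Instead, for fixed $H,K\in T^-_\cA$ I would first observe that each of the four maps in \eqref{eq: cocycle of U '} is natural in the variable $L$ regarded as an object of $\cA(I_{34})\text{-Mod}$. Indeed, by Corollary~\ref{cor: NT whose value on the vacuum sector}, together with the fact that $T^-_\cA$ is closed under fusion (Lemma~\ref{lem: T_A and Rep(A) closed under fusion}, so that $H\boxtimes K\in T^-_\cA$), the arrows $U_1^{(K,-)}$ and $U_1^{(H\boxtimes K,-)}$ are the values on $L$ of natural transformations defined on all of $\cA(I_{34})\text{-Mod}$; the top arrow $U_1^{(H,K\boxtimes\,-)}$ is the precomposition of the natural transformation $U_1^{(H,-)}$ of \eqref{eq: NT defined on A(I_34)-Mod} with the completely additive functor $L\mapsto K\boxtimes L$; and the associator $a$ is natural.

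Consequently the full subcategory $M\subset\cA(I_{34})\text{-Mod}$ of those $L$ for which \eqref{eq: cocycle of U '} commutes is closed under direct sums and direct summands; this is exactly the content of Lemma~\ref{lem: NT between module categories}, namely that a natural transformation between completely additive functors is determined by its value on a single nonzero generator. Since the vacuum sector $H_0\cong L^2R$ generates $\cA(I_{34})\text{-Mod}$ under these operations, it suffices to prove that \eqref{eq: cocycle of U '} commutes when $L=H_0$.

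It then remains to treat $L=H_0$, where the explicit computation becomes available again. The key geometric observation is that $-1$ is a boundary point, not an interior point, of the interval $I_{341}=I_3\cup I_4\cup I_1$; hence every object of $T^-_\cA$ carries a canonical action of $\cA(I_{341})$. Because $H_0\in\Rep(\cA)\subset T^-_\cA$ and $T^-_\cA$ is closed under fusion, all of the objects occurring at $L=H_0$ --- namely $H$, $K$, $H\boxtimes K$, $K\boxtimes H_0$, and $H\boxtimes K\boxtimes H_0$ --- lie in $T^-_\cA$ and are thus $\cA(I_{341})$-modules. Lemma~\ref{lem: reconciliation} then identifies each of $U_1^{(H,K\boxtimes H_0)}$, $U_1^{(K,H_0)}$ and $U_1^{(H\boxtimes K,H_0)}$ with the explicit unitary $\big(\id\boxtimes\rho(u_1)\big)\circ\rho(u_1)^{-1}$, so that commutativity of \eqref{eq: cocycle of U '} at $L=H_0$ reduces to the cocycle identity \eqref{eq: cocycle property for U_1} established in the proof of Lemma~\ref{lem: cocycle property of U} (the associator $a$ absorbing the implicit rebracketing of $H\boxtimes K\boxtimes H_0$). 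That computation uses nothing beyond the existence of these $\cA(I_{341})$-actions, so it applies verbatim even though $H$ and $K$ are only solitons.

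The step I expect to be most delicate is the very first one: verifying that the two ways around \eqref{eq: cocycle of U '} are genuinely natural transformations between the same pair of completely additive functors of $L$ (so that Lemma~\ref{lem: NT between module categories} may legitimately be invoked), which requires matching the source and target functors up to the fixed associator at the top-left vertex and checking complete additivity of $L\mapsto K\boxtimes L$ and of the twisted fusions appearing at the target. The $U_2$ version is proved by the mirror-image argument, interchanging the two marked points $\pm1$, the intervals $I_{341}$ and $I_{234}$, and the categories $T^+_\cA$ and $T^-_\cA$.
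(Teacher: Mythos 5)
Your proposal is correct and follows essentially the same route as the paper: reduce to $L=H_0$ by observing that all four maps depend on $L$ only as an $\cA(I_{34})$-module and that the class of $L$ for which the diagram commutes is closed under direct sums and summands, then invoke Lemma~\ref{lem: reconciliation} to identify the maps with the explicit unitaries and conclude via the computation \eqref{eq: cocycle property for U_1}. Your additional checks (that $-1\in\partial I_{341}$ so objects of $T^-_\cA$ carry $\cA(I_{341})$-actions, and that $H\boxtimes K$, $K\boxtimes H_0$, etc.\ remain in $T^-_\cA$ by closure under fusion) are exactly the points the paper leaves implicit.
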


\begin{proof}
Once again, the maps in \eqref{eq: cocycle of U '} only depend on $L$ as an $\cA(I_{34})$-module.
Let $M:=\{L\in \cA(I_{34})$-Mod $:$ \eqref{eq: cocycle of U '} holds$\}$.
By Lemma~\ref{lem: reconciliation}, we may use the computation \eqref{eq: cocycle property for U_1} to deduce that
$M$ contains the vacuum sector.
%Lemma~\ref{lem: cocycle property of U},
As before, $M$ is closed under taking direct sums and direct summands,
so $M$ is all of $\cA(I_{34})$-Mod.
\end{proof}

Finally, we have:

\begin{proposition}\label{prop: two `hexagon' axioms '}
The braiding isomorphism \eqref{eq: the braiding '} satisfies the two `hexagon' axioms
\[
\beta_{H,K\boxtimes L}=(\id_K\boxtimes \beta_{H,L})(\beta_{H,K}\boxtimes \id_L)
\quad\text{and}\quad
\beta_{H\boxtimes K, L}=(\beta_{H,L}\boxtimes \id_K)(\id_H\boxtimes \beta_{K,L})
\]
(once again, we omit the associators for brevity).
\end{proposition}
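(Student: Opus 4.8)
The plan is to transcribe the proof of Proposition~\ref{prop: two `hexagon' axioms} essentially word for word, since every ingredient that proof relies on has now been re-established in the soliton setting. As there, I would prove only the first axiom; the second is proved in exactly the same way. One should first note that the type bookkeeping is consistent: for the first axiom the braidings force $H\in T^-_\cA$ and $K,L\in T^+_\cA$, for the second they force $H,K\in T^-_\cA$ and $L\in T^+_\cA$, and these are precisely the hypotheses under which Lemmas~\ref{lem: property of U '} and~\ref{lem: cocycle property of U '} (together with their stated $U_2$-analogs) have been proved.

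Concretely, I would adopt the abbreviations of Proposition~\ref{prop: two `hexagon' axioms}: suppress the symbol $\boxtimes$ and the trailing vacuum sector $H_0$, and write $H_1K$ for $H\boxtimes^{\varphi_1}K$ and $H_2K$ for $H\boxtimes^{\varphi_2}K$. With these conventions the braiding \eqref{eq: the braiding '} reads $\beta_{H,K}:HK\to H_1K\to H_1K_2\to K_2H_1\to K_2H\to KH$, the middle isomorphism being the composite \eqref{eq: s-a-s} assembled from the symmetry of Remark~\ref{rm: HK = KH} and the associator of Connes fusion; both exist for arbitrary objects of $\Bim(R)$, hence for solitons regarded as bimodules via \eqref{eq: three iotas}. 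I would then reproduce verbatim the large commutative diagram of Proposition~\ref{prop: two `hexagon' axioms}, whose vertices are the various fusions of ($\varphi_i$-twisted) copies of $H$, $K$, $L$ and $H_0$, reading $\beta_{H,K}\boxtimes\id_L$ along the top left, $\id_K\boxtimes\beta_{H,L}$ along the top right, and $\beta_{H,K\boxtimes L}$ along the bottom.

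It then suffices to verify that each cell commutes. The identification of the top-left path with $\beta_{H,K}\boxtimes\id_L$ uses Lemma~\ref{lem: property of U '} (and its $U_2$-analog) in place of Lemma~\ref{lem: property of U}, while the two cells marked with a star are the content of Lemma~\ref{lem: cocycle property of U '} and its $U_2$-analog, in place of Lemma~\ref{lem: cocycle property of U}. Here I would invoke the fact that these lemmas, as their proofs by reduction to the vacuum sector show, remain valid with the relevant argument ranging over the \emph{entire} module category rather than only over solitons; this is what makes the needed instances legitimate, including those in which a slot is filled by $H_0$ or by a fusion such as $K\boxtimes L$ that need not itself be a soliton. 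All remaining cells commute for the same formal reasons as in the representation case---naturality of the symmetry and associativity isomorphisms, and functoriality of Connes fusion.

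The one point requiring genuine care---and the reason the statement is not immediate from the representation case---is that every occurrence of $U_1$ and $U_2$ in the diagram must be read as the \emph{extended} natural transformation furnished by Corollary~\ref{cor: NT whose value on the vacuum sector} and its $U_2$-mirror, since the naive formula \eqref{eq: U_1 and U_2} ceases to make sense once $H$ and $K$ are solitons. Because these natural transformations are defined on all of $\cA(I_{34})\text{-Mod}$, respectively $\cA(I_{12})\text{-Mod}$, every intermediate object in the diagram is an honest bimodule and every arrow an honest morphism; granting this, the diagram chase is purely formal. I therefore expect no obstacle beyond checking that the extended $U_i$ obey the property and cocycle identities, which is exactly the content that Lemmas~\ref{lem: property of U '} and~\ref{lem: cocycle property of U '} have secured.
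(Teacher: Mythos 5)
Your proposal is correct and is essentially identical to the paper's own proof, which simply states that the argument of Proposition~\ref{prop: two `hexagon' axioms} applies word for word once Lemma~\ref{lem: property of U '} and Lemma~\ref{lem: cocycle property of U '} are substituted for Lemma~\ref{lem: property of U} and Lemma~\ref{lem: cocycle property of U}. Your additional bookkeeping about which objects lie in $T^-_\cA$ versus $T^+_\cA$ and about reading $U_1$, $U_2$ as the extended natural transformations of Corollary~\ref{cor: NT whose value on the vacuum sector} is exactly the implicit content of that substitution.
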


\begin{proof}
The proof of Proposition~\ref{prop: two `hexagon' axioms} applies word for word
(use Lemma~\ref{lem: property of U '} in place of Lemma~\ref{lem: property of U},
and Lemma~\ref{lem: cocycle property of U '} in place of Lemma~\ref{lem: cocycle property of U}).
\end{proof}

We will show later, in Proposition~\ref{prop: at most one half-braiding}, that there exists a \emph{unique} braiding
\[
\beta:T^-_\cA\times T^+_\cA\tworarrow\Bim(R)
\]
that satisfies the hexagon axiom $\beta_{H,K\boxtimes L}=(\id_K\boxtimes \beta_{H,L})(\beta_{H,K}\boxtimes \id_L)$.
As a consequence, the braiding \eqref{eq: the braiding '} is independent of the various choices that we made (e.g., the choice of diffeomorphisms $\varphi_1$ and $\varphi_2$).

\subsection{The absorbing object}\label{sec: The absorbing object}

In this section, we recall the results of our earlier paper \cite{Conformal-nets-are-factorization-algebras}, according to which the category of solitons admits an \emph{absorbing object}.
This is the only place where the condition that $\cA$ has finite index is needed.
We start by recalling the definition of an absorbing object:
\begin{definition} %[{\cite[Def.\,5.3]{Bicommutant-categories-from-fusion-categories}}]
An object $\Omega$ of a tensor category $(T,\otimes)$ is called absorbing if it is non-zero and satisfies
\[
\qquad\qquad(X\not= 0) \,\,\Rightarrow\,\, (X\otimes \Omega \,\cong\,\Omega\,\cong\,\Omega\otimes X)\qquad\quad\forall X\in T.
\]
\end{definition}

\noindent If $T$ admits a conjugation (in particular, if $T$ is bi-involutive), then
$\Omega\in T$ is absorbing if and only if it is non-zero and satisfies $X\otimes \Omega \,\cong\,\Omega$ for every $X\not=0$
(see the comments after \cite[Def.\,5.3]{Bicommutant-categories-from-fusion-categories}).

Consider the following manifold (an equilateral triangle):
\begin{equation*} %\label{eq: equilateral triangle}
\Delta\,\,:=\quad\tikzmath{\draw (0,0) -- (2,1) -- (2,-1) -- cycle;}
\end{equation*}
equipped with the smooth structure given by constant speed parametrization.
We call the upper left side of this triangle $\Delta_+$, the lower left side $\Delta_-$, and the right side $\Delta_{\mathrm{free}}$.
Let $\Omega:=H_0(\Delta,\cA)$ be the vacuum sector of $\cA$ associated to $\Delta$,
let $S^1_{\mathrm{cut}}$ be as in Section~\ref{sec: Representations and solitons}, and let %us pick a diffeomorphism
\begin{equation*} %\label{eq: S^1_cut --> S_- u S_+}
\varphi_\Delta:\,S^1_{\mathrm{cut}}\,\to\, \Delta_-\cup \Delta_+
\end{equation*}
be the constant speed parametrization
that sends the lower half of $S^1_{\mathrm{cut}}$ to $\Delta_-$ and the upper half of $S^1_{\mathrm{cut}}$ to $\Delta_+$.
We use the diffeomorphism $\varphi_\Delta$ to pull back the action of $\cA(\Delta_-\cup \Delta_+)$ on $\Omega$ to an action of $\cA(S^1_{\mathrm{cut}})$, and thus endow $\Omega$ with the structure of a soliton.
Note that, by Haag duality,
\begin{equation*}
\mathrm{End}_{T_\cA}(\Omega)=\cA(\Delta_{\mathrm{free}}).
\end{equation*}
The following important result was proven in \cite[Thm.\,9]{Conformal-nets-are-factorization-algebras}:

\begin{proposition}\label{prop: Omega is absorbent}
If $\cA$ is a conformal net with finite index, then the object $\Omega\in T_\cA$ is absorbing.
\end{proposition}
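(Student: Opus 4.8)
The plan is to prove the one-sided statement $X\boxtimes\Omega\cong\Omega$ for every nonzero soliton $X\in T_\cA$; by the conjugation functor on $T_\cA$ and the remark following the definition of an absorbing object, this is enough, and $\Omega$ is nonzero because it is a vacuum sector. Throughout I would work with the concrete model of $\Omega$ supplied by the vacuum-sector property \eqref{eq:   v:H_0(S,A) --> L^2(A(I))}: taking the interval $\Delta_+\cup\Delta_-\subset\Delta$ identifies $\Omega$ with $L^2(\cA(\Delta_+\cup\Delta_-))$, the soliton structure being left multiplication by $\cA(I)$ for $I\subset\Delta_+\cup\Delta_-$, while by Haag duality the commuting action realizes $\mathrm{End}_{T_\cA}(\Omega)=\cA(\Delta_{\mathrm{free}})$.

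The conceptual heart — and the precise place where finite index is needed — is that $\Omega$ is the \emph{largest} soliton: it contains every simple soliton with infinite multiplicity. I would first establish this multiplicity profile. Complete rationality forces the set of simple solitons to be finite and the statistical dimensions to be finite; geometrically, the free side $\Delta_{\mathrm{free}}$ of the triangle ``emits'' a copy of each simple sector, and because the vacuum attached to a boundary interval is a standard form, each such sector occurs with infinite multiplicity. Finite index enters twice here: it makes the list of simples finite, so that ``infinite multiplicity of every simple'' is a well-posed and stable condition, and it guarantees that the two-interval inclusion has finite Jones index, which is what makes the decomposition of $\Omega$ into simple solitons both convergent and exhaustive. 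It is essential to note that the absorption is \emph{not} a naive diffeomorphism argument: gluing a nontrivial $X$ to $\Omega$ produces an $X$-twisted vacuum on the enlarged region, and the whole content is that this twisted sector is still isomorphic to the untwisted one.

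The second step is to show that fusion with a nonzero $X$ preserves this profile. Using the graphical calculus for solitons, $X\boxtimes\Omega$ is computed by Connes fusion along $\cA(I_-)$, i.e.\ by gluing the cut-circle carrying $X$ onto the $\Delta_+$--$\Delta_-$ side of the triangle. Decomposing $X$ into simples $X_i$ and using that each simple is dualizable of finite dimension (finite index again), the multiplicity of a simple $Y$ in $X\boxtimes\Omega$ is $\sum_i \dim\Hom(Y,X_i\boxtimes\Omega)$; since each $X_i\boxtimes\Omega$ again contains every simple with infinite multiplicity, the profile of $X\boxtimes\Omega$ coincides with that of $\Omega$. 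To finish, I would invoke that, under complete rationality, a separable soliton — realized inside $\Bim(R)$ via Lemma~\ref{lem: essential images} — is determined up to isomorphism by the (finitely supported, $\N\cup\{\infty\}$-valued) multiplicities of the simple solitons, whence $X\boxtimes\Omega\cong\Omega$; alternatively one may show each of $\Omega$, $X\boxtimes\Omega$ is a soliton summand of the other and conclude by an Eilenberg-swindle argument exploiting the infinite multiplicities.

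The main obstacle is the middle step: rigorously proving that $\Omega$ contains every simple soliton with infinite multiplicity and that this characterizes it up to isomorphism. Without finite index the list of simples may be infinite, the vacuum sector need not be exhaustive, and fusion need not preserve the profile, so $\Omega$ would in general fail to be absorbing. Making the ``emission from the free boundary'' precise requires combining the sewing properties of vacuum sectors from \cite{BDH-cn1} with the finite-index structure of the two-interval inclusion, and this is exactly the content of \cite[Thm.\,9]{Conformal-nets-are-factorization-algebras}.
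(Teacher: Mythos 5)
Your reduction to the one-sided statement $X\boxtimes\Omega\cong\Omega$ and your identification $\Omega\cong L^2(\cA(\Delta_+\cup\Delta_-))$ with $\mathrm{End}_{T_\cA}(\Omega)=\cA(\Delta_{\mathrm{free}})$ are fine, but the central mechanism you propose --- that $\Omega$ contains every simple soliton with infinite multiplicity and that separable solitons are classified by finitely supported multiplicity profiles of simples --- is not correct, and no argument along those lines can work. Finite index (complete rationality) makes the set of simple objects of $\Rep(\cA)=Z(T^+_\cA)$ finite; it says nothing of the sort about $T_\cA$ itself, which is a ``large'' category (morally half of $\Bim(R)$: Lemma~\ref{lem: essential images} imposes a condition only at the point $-1$) and is neither finitely semisimple nor semisimple in any useful sense. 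In fact the identity $\mathrm{End}_{T_\cA}(\Omega)=\cA(\Delta_{\mathrm{free}})$, which you yourself quote, already refutes your picture: this endomorphism algebra is a type $\mathrm{III}$ factor, hence has no minimal projections, and therefore $\Omega$ has no irreducible summands whatsoever --- it does not contain any simple soliton as a direct summand, let alone all of them with infinite multiplicity. (Separability is a second obstruction: in general there are uncountably many pairwise non-isomorphic simple solitons, and a separable object cannot contain them all.) Consequently the steps ``decompose $X$ into simples $X_i$'', ``compare multiplicity profiles'', and the Eilenberg-swindle variant all collapse.

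Note also that the paper gives no proof of this proposition beyond quoting \cite[Thm.\,9]{Conformal-nets-are-factorization-algebras}, and your sketch ultimately defers to the same reference, so there is no disagreement about where the actual work happens; but the heuristic you wrap around the citation points in the wrong direction. The mechanism that makes $\Omega$ absorbing is of type $\mathrm{III}$ rather than semisimple nature: by Haag duality $\Omega$ is an invertible $\cA(\Delta_-\cup\Delta_{\mathrm{free}})$--$R$-bimodule, and the relevant fact is that all non-zero separable normal modules over a type $\mathrm{III}$ factor are isomorphic; finite index enters through the split property and the two-interval inclusion in identifying the von Neumann algebras that govern $X\boxtimes\Omega$, not through finiteness of a list of simple solitons. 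If you want to present a sketch rather than a citation, it needs to be rebuilt on that foundation.
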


\begin{remark}\label{rem: dim le kappa}
It is for the above proposition to hold that it was important to insist that all Hilbert spaces be separable.
If we allow Hilbert spaces of arbitrarily large cardinalities, then the tensor category $T_\cA$ does not have an absorbing object.
\end{remark}

\begin{remark}
In the absence of the finite index condition,
we do not know whether $\Omega$ is absorbing.
\end{remark}

Absorbing objects are important because they control half-braidings:

\begin{proposition}[{\cite[Prop.\,5.9]{Bicommutant-categories-from-fusion-categories}}]
\label{prop:DeltaDeterminesHalfBraidings}
Let $T$ be a category equipped with a tensor functor to $\Bim(R)$.
Let $\Omega\in T$ be an absorbing object, and let $(X,e)$ be in $T'$. Then $e$ is completely determined by its value on $\Omega$.
\end{proposition}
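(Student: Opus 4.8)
The plan is to reconstruct an arbitrary component $e_Y$ of the half-braiding of $(X,e)\in T'$ from $e_\Omega$ by playing off three facts against each other: the absorbing property of $\Omega$ (Proposition~\ref{prop: Omega is absorbent}), the naturality of $e$ in the variable $Y$, and the hexagon axiom \eqref{eq: ax 1/2-br ++}. The idea is that the isomorphism $Y\otimes\Omega\cong\Omega$ lets naturality rewrite $e_{Y\otimes\Omega}$ purely in terms of $e_\Omega$, while the hexagon expresses the same $e_{Y\otimes\Omega}$ as a composite built from $e_Y\otimes\id_{\iota\Omega}$ and $e_\Omega$; equating the two and then cancelling the extra $\iota\Omega$ on the right isolates $e_Y$.

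Carry this out as follows. The case $Y=0$ is vacuous, so fix $Y\neq0$ and, since $\Omega$ is absorbing, choose an isomorphism $\alpha\colon Y\otimes\Omega\xrightarrow{\cong}\Omega$ in $T$. Naturality of $e$ along $\alpha$ (suppressing the monoidal-structure isomorphisms of $\iota$) reads $e_\Omega\circ(\id_X\otimes\iota\alpha)=(\iota\alpha\otimes\id_X)\circ e_{Y\otimes\Omega}$, hence
\[
e_{Y\otimes\Omega}=(\iota\alpha^{-1}\otimes\id_X)\circ e_\Omega\circ(\id_X\otimes\iota\alpha),
\]
so $e_{Y\otimes\Omega}$ is determined by $e_\Omega$. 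On the other hand, taking $Z=\Omega$ in the hexagon \eqref{eq: ax 1/2-br ++} gives $e_{Y\otimes\Omega}=(\id_{\iota Y}\otimes e_\Omega)\circ(e_Y\otimes\id_{\iota\Omega})$, and therefore
\[
e_Y\otimes\id_{\iota\Omega}=(\id_{\iota Y}\otimes e_\Omega^{-1})\circ e_{Y\otimes\Omega}
\]
is again determined by $e_\Omega$.

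It remains to recover $e_Y$ from $e_Y\otimes\id_{\iota\Omega}$, and this cancellation is the crux of the argument. What is needed is that the endofunctor $-\boxtimes_R\iota\Omega$ of $\Bim(R)$ is faithful, which in turn rests on $\iota\Omega$ being a nonzero module over the factor $R$; nonvanishing holds because $\Omega\neq0$ and $\iota$ is faithful (Lemma~\ref{lem: T --> Bim(R) is fully faithful}), with $\mathrm{End}_{T_\cA}(\Omega)=\cA(\Delta_{\mathrm{free}})\neq0$. In the situation of interest the local algebras $R=\cA(I_0)$ are type $\mathrm{III}$ factors, so every nonzero separable module is isomorphic to $L^2(R)$; thus $\iota\Omega\cong L^2(R)$ as a left $R$-module and $-\boxtimes_R\iota\Omega\cong\id$, making the cancellation immediate (the general case is \cite{Bicommutant-categories-from-fusion-categories}). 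Once faithfulness is available, $e_Y\otimes\id_{\iota\Omega}$ determines $e_Y$ and the proof is complete. The only step demanding genuine input beyond formal diagram manipulation is precisely this cancellation, i.e.\ verifying that fusing with $\iota\Omega$ reflects equality of morphisms; the naturality and hexagon steps are purely formal.
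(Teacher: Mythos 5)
Your proposal is correct and follows essentially the same route as the paper's proof: apply the hexagon with $Z=\Omega$ to express $e_{Y\otimes\Omega}$ via $e_Y\otimes\id_{\iota\Omega}$ and $e_\Omega$, use naturality along an absorbing isomorphism $Y\otimes\Omega\cong\Omega$ to pin down $e_{Y\otimes\Omega}$ in terms of $e_\Omega$, and then cancel the $\Omega$ factor using faithfulness of $-\boxtimes\Omega$. The only difference is cosmetic: you spell out why fusing with a nonzero module over a factor is faithful, where the paper simply asserts it.
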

\begin{proof}
Let $Y$ be a non-zero object of $T$.
Since $e$ is a half-braiding, we have a commutative diagram
\[
\tikz{
\node(b) at (2,1.5) {$Y \boxtimes X \boxtimes \Omega$};
\node(a) at (0,0) {$X\boxtimes Y\boxtimes \Omega$};
\node(c) at (4,0) {$Y \boxtimes \Omega \boxtimes X.$};
\draw[->] (a) --node[left, yshift=3]{$\scriptstyle e_{Y}\boxtimes\, \id_\Omega$} (b);
\draw[->] (a) --node[above]{$\scriptstyle e_{Y\boxtimes\Omega}$} (c);
\draw[->] (b) --node[right, yshift=3]{$\scriptstyle \id_Y\boxtimes\, e_{\Omega}$} (c);
}
\]
Pick an isomorphism $\phi:Y\boxtimes \Omega\to \Omega$.
The following square is commutative
\[
\tikz{
\node(a) at (0,1.5) {$X\boxtimes (Y\boxtimes \Omega)$};
\node(b) at (4,1.5) {$(Y\boxtimes \Omega)\boxtimes X$};
\node(c) at (0,0) {$X\boxtimes \Omega$};
\node(d) at (4,0) {$\Omega\boxtimes X$};
\draw[->] (a) --node[above]{$\scriptstyle e_{Y\boxtimes\Omega}$} (b);
\draw[->] (a) --node[left]{$\scriptstyle \id_X \boxtimes\, \phi$} (c);
\draw[->] (b) --node[right]{$\scriptstyle \phi\, \boxtimes \id_X$} (d);
\draw[->] (c) --node[above]{$\scriptstyle e_{\Omega}$} (d);
}
\]
and so we get an equation
\(
e_{Y}\boxtimes \id_\Omega=(\id_Y\boxtimes e_{\Omega}^{-1})\circ(\phi^{-1} \boxtimes \id_X)\circ e_{\Omega}\circ(\id_X \boxtimes \phi)
\).
In particular, we see that $e_{Y}\boxtimes \id_\Omega$ is completely determined by $e_{\Omega}$.
Since $-\boxtimes\Omega$ is a faithful functor,
%$\Bim(R)$ has no zero-divisors,
$e_{Y}$ is completely determined by $e_{Y}\boxtimes \id_\Omega$.
Putting those two facts together, we see that $e_{Y}$ is completely determined by $e_{\Omega}$.
\end{proof}

\subsection{The Drinfel'd center}\label{sec: The Drinfel'd center}

This section is devoted to the proof of our main theorem:

\begin{theorem}\label{thm: main thm 2nd version}
If $\cA$ is a conformal net with finite index.
Then the canonical map $(T^+_\cA)'\to \Bim(R)$ is fully faithful, and $(T^+_\cA)'=T^-_\cA$.
\end{theorem}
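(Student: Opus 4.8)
The plan is to realize the identification $(T^+_\cA)'=T^-_\cA$ through a single comparison functor built from the braiding of Section~\ref{sec: the braiding between T^- and T^+}, and then to prove that this functor is an equivalence. First I would define
\[
\Phi:T^-_\cA\to (T^+_\cA)',\qquad H\mapsto \big(\iota^-(H),\,\beta_{H,-}\big),
\]
where $\beta_{H,K}:H\boxtimes K\to K\boxtimes H$ is the braiding of Definition~\ref{def: soliton braiding}. Each $\beta_{H,K}$ is a composite of unitaries, hence unitary, so $\beta_{H,-}$ is a \emph{unitary} half-braiding, as required for the unitary commutant. The half-braiding axiom \eqref{eq: ax 1/2-br ++} is precisely the first hexagon identity of Proposition~\ref{prop: two `hexagon' axioms '}; naturality of $\beta_{H,-}$ in $K\in T^+_\cA$ is built into the construction of the natural transformations $U_1^{(H,-)},U_2^{(H,-)}$; and naturality of $\beta_{-,K}$ in $H$ (for morphisms of $T^-_\cA$, which commute with $\cA(I_{341})$ since $-1\notin\mathring I_{341}$) makes $\Phi$ a functor. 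By construction the forgetful functor $p:(T^+_\cA)'\to\Bim(R)$, $(X,e)\mapsto X$, satisfies $p\circ\Phi=\iota^-$.

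Next I would observe that both claims of the theorem follow once $\Phi$ is an equivalence. Indeed $\Phi$ is automatically fully faithful: $p$ is faithful and $p\circ\Phi=\iota^-$ is fully faithful by Lemma~\ref{lem: T --> Bim(R) is fully faithful}, so $\Phi$ is faithful, while fullness of $\Phi$ follows from fullness of $\iota^-$ (a morphism in $(T^+_\cA)'$ between objects in the image of $\Phi$ is in particular a bimodule map, hence comes from a morphism of $T^-_\cA$). If in addition $\Phi$ is essentially surjective, then $\Phi$ is an equivalence and $p\simeq\iota^-\circ\Phi^{-1}$ is fully faithful with essential image equal to that of $\iota^-$, namely $T^-_\cA$ (Lemma~\ref{lem: essential images}). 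Thus the entire theorem reduces to the \textbf{essential surjectivity of $\Phi$}.

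Essential surjectivity means: given $(X,e)\in (T^+_\cA)'$, I must produce $H\in T^-_\cA$ with $(X,e)\cong\Phi(H)$. The natural candidate is $H=X$ with its bimodule structure enriched to a $T^-_\cA$-soliton, so I would prove two statements: (C1) $X$ lies in $T^-_\cA$, i.e.\ (by Lemma~\ref{lem: essential images}) the actions of $\cA(I_1)$ and $\cA(I_4)$ on $X$ extend to $\cA(I_{41})$, which is condition (b); and (C2) the half-braiding $e$ equals $\beta_{X,-}$. For (C2) I would invoke the absorbing object $\Omega$: by Proposition~\ref{prop: Omega is absorbent} it exists when $\cA$ has finite index, and by Proposition~\ref{prop:DeltaDeterminesHalfBraidings} both $e$ and $\beta_{X,-}$ are determined by their values on $\Omega$, so (C2) reduces to the single identity $e_\Omega=\beta_{X,\Omega}$. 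This identity, together with (C1), is where the \emph{vacuum} nature of $\Omega=H_0(\Delta,\cA)$ enters: using Haag duality on the triangle $\Delta$ (so that $\mathrm{End}_{T_\cA}(\Omega)=\cA(\Delta_{\mathrm{free}})$) and the absorbing isomorphisms $X\boxtimes\Omega\cong\Omega\cong\Omega\boxtimes X$, one transports the algebra actions across the point $1$ available on $\Omega$ back onto $X$, simultaneously building the extended $\cA(I_{41})$-action of (C1) and pinning down $e_\Omega$.

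The main obstacle is exactly this last step --- extracting from the unitary half-braiding $e_\Omega$ an extension of the $\cA(I_1)$- and $\cA(I_4)$-actions to $\cA(I_{41})$, and checking it is compatible, so that Lemma~\ref{Lem.1.9 of CN1}, applied to the circle with its point $-1$ replaced by two points, assembles a genuine $T^-_\cA$-soliton structure. Everything else is formal: once the extended action is in hand, Lemma~\ref{Lem.1.9 of CN1} produces the soliton $H$, Proposition~\ref{prop:DeltaDeterminesHalfBraidings} upgrades $e_\Omega=\beta_{X,\Omega}$ to $e=\beta_{X,-}$, and the equivalence $\Phi$ delivers both assertions of the theorem. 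Uniqueness of the braiding (Proposition~\ref{prop: at most one half-braiding}) then re-expresses (C2) as the statement that $\beta_{X,-}$ is the only half-braiding carried by $X$.
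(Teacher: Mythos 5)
Your overall architecture matches the paper's: you build the comparison functor from the braiding of Section~\ref{sec: the braiding between T^- and T^+}, reduce both assertions to fullness of $(T^+_\cA)'\to\Bim(R)$ plus essential surjectivity onto $T^-_\cA$, and use Proposition~\ref{prop: at most one half-braiding} to identify $e$ with $\beta_{X,-}$ once $X$ is known to lie in $T^-_\cA$. That reduction is sound. But the step you defer as ``the main obstacle'' --- proving (C1), i.e.\ that for $(X,e)\in(T^+_\cA)'$ the actions of $\cA(I_1)$ and $\cA(I_4)$ extend to $\cA(I_{41})$ --- is the entire mathematical content of the theorem, and the sketch you give for it does not work as stated. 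The absorbing isomorphisms $X\boxtimes\Omega\cong\Omega\cong\Omega\boxtimes X$ are non-canonical and intertwine nothing in particular, so they cannot be used to ``transport the algebra actions across the point $1$ back onto $X$''; in the paper they enter only into the uniqueness statements (Propositions~\ref{prop:DeltaDeterminesHalfBraidings} and~\ref{prop: at most one half-braiding}), not into the extension argument.

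What is actually needed is the \emph{vacuum} structure of $\Omega=H_0(\Delta,\cA)$: the canonical unitary $v:\Omega\to L^2(\cA(\Delta_-))$, reparametrized to $f:\Omega\to L^2(R)$, turns the half-braiding into a concrete unitary $F:\Omega\boxtimes X\xrightarrow{e_\Omega^{-1}}X\boxtimes\Omega\xrightarrow{\id\boxtimes f}X\boxtimes L^2R\cong X$. This $F$ intertwines the action of $\cA(I_4)\subset R$ on the $\Omega$-leg with $\rho_4^X$, and (by naturality of $e$ under $\mathrm{End}_{T^+_\cA}(\Omega)=\cA(\Delta_{\mathrm{free}})$) intertwines the action $\alpha$ of $\cA(I_1)$ on $\Omega$ obtained via $\cA(j\circ\varphi_\Delta\circ b)$ with $\rho_1^X$. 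The decisive geometric point, absent from your proposal, is that the parametrizations $\varphi_\Delta:I_4\to\Delta_-$ and $j\circ\varphi_\Delta\circ b:I_1\to\Delta_{\mathrm{free}}$ glue to a single smooth embedding $I_{41}\to\Delta$ because $j$ is chosen length-preserving near the corner $\Delta_-\cap\Delta_{\mathrm{free}}$; hence the two actions already extend to $\cA(I_{41})$ on $\Omega$, and $F$ transports the extension to $X$. Without this mechanism --- the half-braiding against the vacuum sector of the triangle converting the free edge of $\Delta$ into the missing interval on $X$ --- condition (C1), and with it the theorem, remains unproved. (A minor further remark: once condition (b) of Lemma~\ref{lem: essential images} is verified you are done; no separate appeal to Lemma~\ref{Lem.1.9 of CN1} for $S^1$ with $-1$ doubled is needed beyond what that lemma already packages.)
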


The statements in Theorem~\ref{thm: If Rep(cA_G,k) is mod...} are easy consequences of Theorem~\ref{thm: main thm 2nd version}.
The second bullet point in Theorem~\ref{thm: If Rep(cA_G,k) is mod...} is obtained by exchanging the roles of $T^+_\cA$ and $T^-_\cA$, and
the third bullet point is the following computation:
\[
Z(T^+_\cA)=Z_{T^+_\cA}(T^+_\cA)=Z_{\Bim(R)}(T^+_\cA)\cap T^+_\cA=T^-_\cA \cap T^+_\cA=\Rep(\cA),
\]
where we have used Lemma~\ref{lem: essential images} for the last equality.
We note that Lemma~\ref{lem: T --> Bim(R) is fully faithful} (according to which $T^-_\cA$, $T^+_\cA$ and $\Rep(\cA)$ are full subcategories of $\Bim(R)$) has been used implicitly here,
in the usage of the symbol $\cap$.

\begin{proposition}\label{prop: at most one half-braiding}
An object $H\in\Bim(R)$ admits at most one half-braiding with $T_\cA$:
\[
e=(e_K : H \boxtimes K \to K \boxtimes H)_{K\in T_\cA}\;\!.
\]
\end{proposition}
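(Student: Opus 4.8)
The plan is to exploit the absorbing object $\Omega\in T_\cA$, which exists by Proposition~\ref{prop: Omega is absorbent} because $\cA$ has finite index. By Proposition~\ref{prop:DeltaDeterminesHalfBraidings}, any half-braiding $e$ on $H$ is completely determined by the single isomorphism $e_\Omega\colon H\boxtimes\Omega\to\Omega\boxtimes H$, so it suffices to show that $e_\Omega$ is forced. Given two half-braidings $e,e'$ on $H$, I would study the automorphisms $h_K:=e'_K\circ e_K^{-1}\in\mathrm{Aut}_{\Bim(R)}(K\boxtimes H)$. Naturality of $e$ and $e'$ gives, for every $\alpha\colon K\to K'$ in $T_\cA$, the identity $h_{K'}\circ(\alpha\boxtimes\id_H)=(\alpha\boxtimes\id_H)\circ h_K$, so that $h$ is a natural transformation of the functor $-\boxtimes H$. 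Taking $\alpha\in\mathrm{End}_{T_\cA}(\Omega)=\cA(\Delta_{\mathrm{free}})$ then shows that $h_\Omega$ commutes with the whole $\Omega$-side subalgebra $\{f\boxtimes\id_H\}$, in addition to commuting with the $R$-$R$-bimodule structure.

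The first substantive step is a commutation theorem for the Connes fusion $\Omega\boxtimes H$: the relative commutant of the $\Omega$-side algebra $\{f\boxtimes\id_H:f\in\mathrm{End}_{T_\cA}(\Omega)\}$ inside $\mathrm{End}_{\Bim(R)}(\Omega\boxtimes H)$ is exactly the $H$-side algebra $\{\id_\Omega\boxtimes b:b\in\mathrm{End}_{\Bim(R)}(H)\}$. Here one uses that, via the absorbing isomorphism together with Haag duality on the circle $\Delta$, one has $\mathrm{End}_{\Bim(R)}(\Omega\boxtimes H)\cong\cA(\Delta_{\mathrm{free}})$, and that strong additivity makes $\cA(I_{12})\vee\cA(I_{34})$ exhaust the soliton structure of $\Omega$. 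Granting this, I obtain $h_\Omega=\id_\Omega\boxtimes b$ for a unique $b\in\mathrm{End}_{\Bim(R)}(H)$, and the problem reduces to proving $b=\id_H$.

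To pin down $b$, I would feed the hexagon axiom \eqref{eq: ax 1/2-br ++} into the picture. Applying it to both $e$ and $e'$ at $Y=Z=\Omega$ and dividing yields the cocycle identity $h_{\Omega\boxtimes\Omega}=(\id_\Omega\boxtimes e'_\Omega)\circ(h_\Omega\boxtimes\id_\Omega)\circ(\id_\Omega\boxtimes e_\Omega^{-1})$, which, after substituting $h_\Omega=\id_\Omega\boxtimes b$ and $e'_\Omega=(\id_\Omega\boxtimes b)\circ e_\Omega$, simplifies to $h_{\Omega\boxtimes\Omega}=\id_\Omega\boxtimes\big[(\id_\Omega\boxtimes b)\circ e_\Omega\circ(b\boxtimes\id_\Omega)\circ e_\Omega^{-1}\big]$. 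On the other hand, naturality of $h$ with respect to an absorbing isomorphism $m\colon\Omega\boxtimes\Omega\to\Omega$ gives $h_{\Omega\boxtimes\Omega}=(m\boxtimes\id_H)^{-1}\circ h_\Omega\circ(m\boxtimes\id_H)$, and since $b$ acts only on the $H$-tensorand it commutes with $m$, so the right-hand side equals $\id_\Omega\boxtimes(\id_\Omega\boxtimes b)$. Equating the two expressions, and using that $\Omega\boxtimes-$ and $-\boxtimes\Omega$ are faithful (as already exploited in the proof of Proposition~\ref{prop:DeltaDeterminesHalfBraidings}), I can cancel to obtain first $e_\Omega\circ(b\boxtimes\id_\Omega)\circ e_\Omega^{-1}=\id$ and then $b=\id_H$. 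Hence $h_\Omega=\id$, so $e_\Omega=e'_\Omega$, and by Proposition~\ref{prop:DeltaDeterminesHalfBraidings} we conclude $e=e'$.

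The step I expect to be the main obstacle is the commutation theorem identifying the relative commutant of the $\Omega$-side with the $H$-side inside $\mathrm{End}_{\Bim(R)}(\Omega\boxtimes H)$; this is where the detailed structure of the absorbing vacuum object $\Omega$ (and, through Proposition~\ref{prop: Omega is absorbent}, the finite-index hypothesis) genuinely enters. By contrast, the remaining manipulations are formal consequences of naturality, the hexagon, and the faithfulness of fusion with $\Omega$.
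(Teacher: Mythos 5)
Your proposal follows essentially the same route as the paper's proof: reduce to the single component $e_\Omega$ via Propositions~\ref{prop: Omega is absorbent} and~\ref{prop:DeltaDeterminesHalfBraidings}, show that the discrepancy $h_\Omega=e'_\Omega\circ e_\Omega^{-1}$ commutes with $\mathrm{End}_{T_\cA}(\Omega)=\cA(\Delta_{\mathrm{free}})$ and hence has the form $\id_\Omega\boxtimes b$, and then force $b=\id_H$ using the hexagon at $(\Omega,\Omega)$, naturality with respect to an isomorphism $\Omega\boxtimes\Omega\cong\Omega$, and faithfulness of $\Omega\boxtimes-$ and $-\boxtimes\Omega$. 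Your final cocycle computation is a correct algebraic transcription of the paper's two-pentagon diagram.

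The one step you flag as the main obstacle --- the ``commutation theorem'' identifying the relative commutant of the $\Omega$-side with the $H$-side inside $\mathrm{End}_{\Bim(R)}(\Omega\boxtimes H)$ --- is stated correctly, but the hint you give for proving it does not work: the claim $\mathrm{End}_{\Bim(R)}(\Omega\boxtimes H)\cong\cA(\Delta_{\mathrm{free}})$ fails for a general bimodule $H\in\Bim(R)$ (take $H=L^2R\oplus L^2R$, for which this endomorphism algebra is $M_2(\cA(\Delta_{\mathrm{free}}))$), and at this stage of the argument one may not assume $H\in T^-_\cA$, so absorption cannot be invoked to identify $\Omega\boxtimes H$ with $\Omega$. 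The paper's actual mechanism is different and cleaner: by Haag duality on the circle $\Delta$, the vacuum sector $\Omega$ is an \emph{invertible} $\cA(\Delta_-\cup\Delta_{\mathrm{free}})$-$R$-bimodule \cite[Prop.\,3.10]{BDH(Dualizability+Index-of-subfactors)}, so $\Omega\boxtimes_R-$ is an equivalence of categories and every $\cA(\Delta_-\cup\Delta_{\mathrm{free}})$-$R$-bimodule endomorphism of $\Omega\boxtimes_R H$ is of the form $\id_\Omega\boxtimes b$ with $b\in\mathrm{End}_{\Bim(R)}(H)$. Note also that the finite-index hypothesis is not what powers this step --- it enters only through the existence of the absorbing object $\Omega$ (Proposition~\ref{prop: Omega is absorbent}); the commutation statement itself needs only Haag duality. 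With that substitution your argument is complete.
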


\begin{proof} Let $e$ and $e'$ be half-braidings of $H$ with $T_\cA$.
We wish to show that $e=e'$.
By Proposition~\ref{prop: Omega is absorbent} and Proposition~\ref{prop:DeltaDeterminesHalfBraidings}, it is enough to show that $e_\Omega=e'_\Omega$.
Consider the following $R$-$R$-bimodule map:
\begin{equation}\label{eq: e_Om^-1  circ  e_Om'}
e_\Omega'\circ e_\Omega^{-1}:\,\Omega\boxtimes_R H\,\to\,\Omega\boxtimes_R H.
\end{equation}
By the naturality of $e$ and of $e'$, this map is equivariant for the actions of $\mathrm{End}_{T_\cA}(\Omega)=\cA(\Delta_{\mathrm{free}})$.
We may therefore treat \eqref{eq: e_Om^-1  circ  e_Om'} as a map of $\cA(\Delta_-\cup\Delta_{\mathrm{free}})$-$R$-bimodules.

By Haag duality, $\Omega$ is an invertible $\cA(\Delta_-\cup\Delta_{\mathrm{free}})$-$R$-bimodule \cite[Prop.\,3.10]{BDH(Dualizability+Index-of-subfactors)} (here, $R$ is identified with $\cA(\Delta_+)^\op$).
So there exists an invertible $R$-$R$-bimodule map $u:H\to H$ that satisfies
\begin{equation*} %\label{eq: e'e^-1=u}
e_\Omega'\circ e_\Omega^{-1}=\id_\Omega\boxtimes u.
\end{equation*}
Pick an isomorphism $\omega:\Omega\boxtimes\Omega\to\Omega$ in $T_\cA$ and consider the following diagram:
\[
\tikz{
\node(a) at (-3,1.8) {$H\boxtimes\Omega\boxtimes \Omega$};
\node(b) at (0,3) {$\Omega\boxtimes H\boxtimes\Omega$};
\node(c) at (3,1.8) {$\Omega\boxtimes\Omega\boxtimes H$};
\node(d) at (-2,0) {$H\boxtimes \Omega$};
\node(e) at (2,0) {$\Omega\boxtimes H$};
\draw[->] (a) --node[above, xshift=-8]{$\scriptscriptstyle e_\Omega\boxtimes \id$} (b);
\draw[->] (b) --node[above, xshift=8]{$\scriptscriptstyle \id\boxtimes e_\Omega$} (c);
\draw[->] (a) --node[above]{$\scriptscriptstyle e_{\Omega\boxtimes\Omega}$} (c);
\draw[->] (a) --node[right]{$\scriptscriptstyle \id\boxtimes \omega$} (d);
\draw[->] (d) --node[above]{$\scriptscriptstyle e_{\Omega}$} (e);
\draw[->] (e) --node[left]{$\scriptscriptstyle \omega\boxtimes \id$} (c);
\node(A) at (-5.2,2.5) {$H\boxtimes\Omega\boxtimes \Omega$};
\node(B) at (0,4.7) {$\Omega\boxtimes H\boxtimes\Omega$};
\node(C) at (5.2,2.5) {$\Omega\boxtimes\Omega\boxtimes H$};
\node(D) at (-3,-1.45) {$H\boxtimes \Omega$};
\node(E) at (3,-1.45) {$\Omega\boxtimes H$};
\draw[->] (A) --node[above, xshift=-8]{$\scriptscriptstyle e'_\Omega\boxtimes \id$} (B);
\draw[->] (B) --node[above, xshift=8]{$\scriptscriptstyle \id\boxtimes e'_\Omega$} (C);
%\draw[->] (A) --node[above]{$\scriptscriptstyle e'_{\Omega\boxtimes\Omega}$} (c);
\draw[->] (A) --node[left]{$\scriptscriptstyle \id\boxtimes \omega$} (D);
\draw[->] (D) --node[above]{$\scriptscriptstyle e'_{\Omega}$} (E);
\draw[->] (E) --node[right]{$\scriptscriptstyle \omega\boxtimes \id$} (C);
\draw[->] (a) --node[above, xshift=8]{$\scriptscriptstyle \id$} (A);
\draw[->] (b) to[bend right =20]node[right, yshift=-2]{$\scriptscriptstyle \id$} (B);
\draw[->] (b) to[bend left =20]node[left, yshift=-2]{$\scriptscriptstyle \id\boxtimes u \boxtimes\id$} (B);
\draw[->] (c) --node[above, xshift=-18]{$\scriptscriptstyle \id\boxtimes \id\boxtimes u$} (C);
\draw[->] (d) --node[right, xshift=1]{$\scriptscriptstyle \id$} (D);
\draw[->] (e) --node[left, xshift=-1]{$\scriptscriptstyle \id\boxtimes u$} (E);
}
\]
The middle pentagon commutes because $e$ is a half-braiding.
The outer pentagon commutes by the corresponding property of $e'$. All the quadrilaterals are visibly commutative.
It follows that
\[
\id_\Omega\boxtimes u\boxtimes \id_\Omega=\id_{\Omega\boxtimes H\boxtimes\Omega}.
\]
The functors $\Omega\boxtimes -$ and $-\boxtimes\Omega$ being faithful, we conclude that ${u=\id_H}$.
\end{proof}

\begin{corollary}\label{cor: One beta}
The braiding
\[
\beta:T^-_\cA\times T^+_\cA\tworarrow\Bim(R)
\]
defined in \eqref{eq: the braiding '} is independent of the choices of diffeomorphisms $\varphi_1$ and $\varphi_2$.
The same holds true for its restriction \eqref{eq: the braiding} to $\Rep(\cA)$.
\end{corollary}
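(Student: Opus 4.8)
The plan is to fix $H\in T^-_\cA$ and to recognise the family $\beta_{H,-}=(\beta_{H,K})_{K\in T^+_\cA}$ as a unitary half-braiding of the bimodule $\iota^-(H)\in\Bim(R)$ with $T^+_\cA=T_\cA$; the independence statement will then be forced by the uniqueness clause of Proposition~\ref{prop: at most one half-braiding}. Indeed, every admissible choice of the diffeomorphisms $\varphi_1,\varphi_2$ produces, via \eqref{eq: the braiding '}, one such half-braiding, so all of them must coincide.

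The first task is therefore to check that $\beta_{H,-}$ really is a unitary half-braiding. Unitarity is immediate, since the composite \eqref{eq: the braiding '} is built from unit isomorphisms, from the maps $U_1,U_2$ (each of the form $(\id\boxtimes\rho(u))\circ\rho(u)^{-1}$ with $u$ unitary), and from the associator and symmetry isomorphisms of \eqref{eq: s-a-s}, all of which are unitary. The half-braiding identity $\beta_{H,K\boxtimes L}=(\id_K\boxtimes\beta_{H,L})(\beta_{H,K}\boxtimes\id_L)$ is precisely the first equation of Proposition~\ref{prop: two `hexagon' axioms '}. The one point that takes a little care is naturality in $K$, which I would verify factor by factor: the unit, associator and symmetry isomorphisms, together with the one factor not depending on $K$, are natural by functoriality; the factor built from $U_1^{(H,-)}$ is natural by Corollary~\ref{cor: NT whose value on the vacuum sector}; the factor $U_2^{(K,H_0)}$ is natural because with the vacuum in its second slot its defining formula is explicit and any morphism of solitons intertwines the relevant action (whose supporting interval avoids the base point); and the last factor $U_2^{(K,H)}$, in which $H\in T^-_\cA$ is fixed but need not be a representation, is natural in $K$ by reducing to the vacuum and using the uniqueness clause of Lemma~\ref{lem: NT between module categories}.

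Granting this, $(\iota^-(H),\beta_{H,-})$ defines an object of $(T^+_\cA)'=Z_{\Bim(R)}(T^+_\cA)$. By Proposition~\ref{prop: at most one half-braiding}, the bimodule $\iota^-(H)$ carries at most one half-braiding with $T_\cA$; hence any two versions of \eqref{eq: the braiding '} built from different $\varphi_1,\varphi_2$ agree as maps $H\boxtimes K\to K\boxtimes H$ for every $K\in T^+_\cA$. Since $H\in T^-_\cA$ was arbitrary, $\beta$ is independent of all the choices. The claim for \eqref{eq: the braiding} follows at once: by Lemma~\ref{lem: essential images} we have $\Rep(\cA)=T^-_\cA\cap T^+_\cA$ inside $\Bim(R)$, and \eqref{eq: the braiding} is the restriction of \eqref{eq: the braiding '} to that intersection.

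I expect the naturality of $\beta_{H,-}$ in $K$ to be the only mildly delicate step, precisely because the $U_2$-factors were originally constructed as natural transformations in their second slot, whereas here they must be controlled as $K$ moves in the first slot. All the substantive difficulty, however, is already absorbed into Proposition~\ref{prop: at most one half-braiding}, whose proof rests on the absorbing object $\Omega$ and hence on the finite-index hypothesis; once uniqueness of half-braidings is in hand, the corollary is essentially formal.
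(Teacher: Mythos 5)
Your proposal is correct and follows exactly the route the paper intends: the corollary is placed immediately after Proposition~\ref{prop: at most one half-braiding} precisely so that, for fixed $H$, the family $\beta_{H,-}$ is recognised as a half-braiding of $\iota^-(H)$ with $T^+_\cA$ (hexagon from Proposition~\ref{prop: two `hexagon' axioms '}), whence uniqueness forces independence of $\varphi_1,\varphi_2$, with the $\Rep(\cA)$ case following by restriction. Your explicit verification of naturality in $K$ is a point the paper leaves implicit, and it is a genuine (if routine) prerequisite for invoking the uniqueness proposition, so including it only strengthens the argument.
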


\begin{proof}[Proof of Theorem \ref{thm: main thm 2nd version}]
The half-braiding constructed in Section~\ref{sec: the braiding between T^- and T^+} provides a functor
\begin{equation}\label{eq: T^-_A to (T^+_A)'}
T^-_\cA\,\longrightarrow\, (T^+_\cA)'
\end{equation}
that fits in a diagram
\[
\tikz{
\node(a) at (.05,0) {$T^-_\cA$};
\node(b) at (2,.9) {$(T^+_\cA)'$};
\node(c) at (4,0) {$\Bim(R)$};
\draw[->] (a) -- (b);
\draw[->] (a) -- (c);
\draw[->] (b) -- (c);
}
\]
We need to show that the functor \eqref{eq: T^-_A to (T^+_A)'} is an equivalence of categories.
It is clearly faithful as $T^-_\cA\to\Bim(R)$ and $(T^+_\cA)'\to\Bim(R)$ are both faithful functors.
Recall from Lemma~\ref{lem: T --> Bim(R) is fully faithful} that the functor $T^-_\cA\to \Bim(R)$ is fully faithful.
In order to check that the functor \eqref{eq: T^-_A to (T^+_A)'} is an equivalence of categories, it is therefore enough to show that:
\begin{itemize}
\item
The functor $(T^+_\cA)'\to \Bim(R)$ is full (and thus fully faithful).
\item
For every object $Y\in (T^+_\cA)'$, there exists an object $X\in T^-_\cA$ and an isomorphism between their images in $\Bim(R)$.
\end{itemize}
We start by the second item.
By Lemma~\ref{lem: essential images}, it is enough to check that for every object $(H,e) \in (T^+_\cA)'$, 
the actions of $A(I_4)$ and $A(I_1)$ extend to an action of $A(I_{41})$ on $H$.

Recall that $\Omega:=H_0(\Delta,\cA)$, and recall the definition of $\varphi_\Delta:S^1_{\mathrm{cut}}\to \Delta_-\cup \Delta_+$ from the previous section. %and
Let $j:\Delta\to \Delta$ be an orientation reversing involution that satisfies
\[
j(\Delta_-)=\Delta_+\cup \Delta_{\mathrm{free}}
\qquad\quad
j(\Delta_+)=\varphi_\Delta(I_3)
\qquad\quad
j(\Delta_{\mathrm{free}})=\varphi_\Delta(I_4),
\]
and is length-preserving in a neighbourhood of the vertex $\Delta_-\cap \Delta_{\mathrm{free}}$ of $\Delta$.
Recall from \eqref{eq:   v:H_0(S,A) --> L^2(A(I))} that there is a unitary isomorphism
%$\Omega:=H_0(\Delta,\cA)$ is isomorphic to $L^2(\cA(\Delta_-))$ via some map
\[
v:\Omega\to L^2(\cA(\Delta_-))
\]
that intertwines the actions of $\cA(\Delta_-)$, and satisfies $v(\cA(j)(x)\xi)=(v(\xi))x$ for all $x\in \cA(\Delta_-)$ and $\xi\in\Omega$.
Using $\varphi_\Delta$ to identify $I_{34}$ with $\Delta_-$, we get an isomorphism
\[
f:=L^2(\cA(\varphi_\Delta))\circ v:\Omega\to L^2(\cA(\Delta_-))\to L^2(\cA(I_{34}))\cong L^2(R).
\]
Let us write $b:S^1\to S^1$ for the complex conjugation map $z\mapsto\bar z$.
Then the isomorphism $f:\Omega\to L^2(R)$ intertwines the left actions of $\cA(I_{34})$, and satisfies
\[
f\big(\cA(j\circ\varphi_\Delta\circ b)(x)\cdot \xi)=x\cdot(f(\xi)\big)
\]
for all $x\in\cA(I_{12})$ and $\xi\in\Omega$.

Recall that our goal is to show that the actions of $\cA(I_4)$ and $\cA(I_1)$ extend to an action of $\cA(I_{41})$ on $H$.
Consider the isomorphism
\[
e_\Omega:H\boxtimes \Omega\to \Omega\boxtimes H
\]
provided by the half-braiding.
It is a homomorphism of $R$-$R$-bimodules.
By the naturality axiom of half-braidings, it is also equivariant with respect to the actions of $\mathrm{End}_{T^+_\cA}(\Omega)=\cA(\Delta_{\mathrm{free}})$ on $H\boxtimes \Omega$ and on $\Omega\boxtimes H$.
We now consider the composite:
\begin{equation} \label{eq: Om H --> H Om --> H}
F\;\!:\;\,\Omega\boxtimes H\xrightarrow{\,\,e_\Omega^{-1}\,\,} H\boxtimes \Omega\xrightarrow{\,\id\boxtimes f\,}H\boxtimes L^2R\cong H.
\end{equation}
It is equivariant with respect to the left actions of $\cA(I_{34})$, and
intertwines the action of $\cA(\Delta_{\mathrm{free}})$ on $\Omega\boxtimes H$ with the following action on $H$:
\[
\cA(\Delta_{\mathrm{free}})
\xrightarrow{\cA(j\circ\varphi_\Delta\circ b)^{-1}}
\cA(I_1)\to B(H).
\]
We represent the isomorphism \eqref{eq: Om H --> H Om --> H} graphically as follows:
\[
\tikzmath[scale=.55, line join=bevel]{
\useasboundingbox (-1.15,-.8) rectangle (1.1,1.5);
\draw (-1,0) -- (1,.8) -- (1,-.8) -- cycle;
\node at (.3,0) {$\scriptstyle \Omega$};\node at (-.15,.85) {$\scriptstyle H$};
\draw (-1,0) ..controls +(-.5,1.4) and +(-.5,1.3).. (1,.8);
\node[fill=white, circle, inner sep=-.11] at (-1.05,.15) {$\scriptscriptstyle \ast$};
\node[fill=white, circle, inner sep=-.11] at (1-.05,.95) {$\scriptscriptstyle \ast$};
}
\,\,\xrightarrow{\,e_\Omega^{-1}\,}\,\,
\tikzmath[scale=.55, yscale=-1, line join=bevel]{
\useasboundingbox (-1.15,-.8) rectangle (1.1,1.5);
\draw (-1,0) -- (1,.8) -- (1,-.8) -- cycle;
\node at (.3,0) {$\scriptstyle \Omega$};\node at (-.15,.85) {$\scriptstyle H$};
\draw (-1,0) ..controls +(-.5,1.4) and +(-.5,1.3).. (1,.8);
\node[fill=white, circle, inner sep=-.11] at (-1.05,.15) {$\scriptscriptstyle \ast$};
\node[fill=white, circle, inner sep=-.11] at (1-.05,.95) {$\scriptscriptstyle \ast$};
}
\,\,\xrightarrow{\id\boxtimes f}\,\,
\tikzmath[scale=.65]{\draw (0,0) circle (1) (-1,0) -- (1,0); \node at (0,-.45) {$\scriptstyle H$};\node at (0,.45) {$\scriptstyle H_0$};
\node[fill=white, circle, inner sep=0] at (-1,-.135) {$\scriptscriptstyle \ast$};
\node[fill=white, circle, inner sep=0] at (1,-.135) {$\scriptscriptstyle \ast$};
}
\,\,\cong\,\,
\tikzmath[scale=.65]{\draw (0,0) circle (1); \node at (0,0) {$H$};
\node[fill=white, circle, inner sep=0] at (-1,0) {$\scriptscriptstyle \ast$};
\node[fill=white, circle, inner sep=0] at (1,0) {$\scriptscriptstyle \ast$};
}
\]
The little stars are there to indicate that $H$ is a priori a mere bimodule, as opposed to a soliton or a representation.

Let us write
\[
\rho^H_i:\cA(I_i)\to B(H)\qquad\text{and}\qquad \rho^\Omega_i:\cA(I_i)\to B(\Omega)
\]
for the actions of $\cA(I_i)$ on $H$ and on $\Omega$, and let us write $\alpha$ for the following action of $\cA(I_1)$ on $\Omega$:
\[
\alpha:\cA(I_1)
\xrightarrow{\,\,\cA(j\circ\varphi_\Delta\circ b)\,\,}
\cA(\Delta_{\mathrm{free}})
\to B(\Omega).
\]
By construction, the map \eqref{eq: Om H --> H Om --> H} satisfies
\begin{equation}\label{eq: intertwining property of F}
\begin{split}
F\circ \big(\rho_4^\Omega (x)\boxtimes \id_H\big) = \rho_4^H (x) \circ F\qquad\quad \forall x\in \cA(I_4)\;
\\
F\circ \big(\alpha (x)\boxtimes \id_H\big) = \rho_1^H (x) \circ F\qquad\quad\; \forall x\in \cA(I_1).
\end{split}
\end{equation}
Since $j$ is an isometry in a neighbourhood of $\Delta_-\cap \Delta_{\mathrm{free}}$,
the maps $\varphi_\Delta:I_4\to \Delta$ and $j\circ\varphi_\Delta\circ b:I_1\to \Delta$ extend to a smooth map
\[
\varphi_\Delta\cup (j\circ\varphi_\Delta\circ b): I_{41}\to \Delta.
\]
The actions $\rho_4^\Omega:\cA(I_4)\to B(\Omega)$ and $\alpha:\cA(I_1)\to B(\Omega)$ therefore extend to an action of $A(I_{14})$ on $\Omega$.
By the intertwining properties \eqref{eq: intertwining property of F} of $F$, 
the actions $\rho_4^H:\cA(I_4)\to B(H)$ and $\rho_1^H:\cA(I_1)\to B(H)$ therefore also extend to an action of $A(I_{14})$ on $H$.
This finishes the proof of the second item in the bullet list.

We now turn our attention to the first item in the list.
Let us write
\[
s:T^-_\cA\to (T^+_\cA)'
\] 
for the functor \eqref{eq: T^-_A to (T^+_A)'}. % constructed in Section~\ref{sec: the braiding between T^- and T^+}.
Let $(H_1,e_1)$ and $(H_2,e_2)$ be objects of $(T^+_\cA)'$, and let $f:H_1\to H_2$ be a morphism between their images in $\Bim(R)$.
In the first half of the proof, we learned that $H_1$ and $H_2$ are in fact objects of $T^-_\cA$.
By Lemma~\ref{lem: T --> Bim(R) is fully faithful}, $f$ is a morphism in $T^-_\cA$.
By Proposition~\ref{prop: at most one half-braiding}, $s(H_1)=(H_1,e_1)$ and $s(H_2)=(H_2,e_2)$.
Therefore,
\[
s(f):(H_1,e_1)\to (H_2,e_2)
\]
is a morphism in $(T^+_\cA)'$.
Now, by construction, $s(f)$ maps to $f$ under the forgetful map $(T^+_\cA)'\to \Bim(R)$.
\end{proof}

\begin{remark}\label{rem: Z versus dot Z}
The arguments in the proofs of Proposition~\ref{prop: at most one half-braiding} and of Theorem \ref{thm: main thm 2nd version} never used the fact that the half-braidings are unitary
(it just so happens that every half-braiding with $T^+_\cA$ is unitary).
The non-unitary version of Theorem~\ref{thm: If Rep(cA_G,k) is mod...} therefore also holds:
\[
\dot Z_{\Bim(R)}(T^+_\cA)\cong T^-_\cA,\qquad
\dot Z_{\Bim(R)}(T^-_\cA)\cong T^+_\cA,\qquad
\dot Z(T^+_\cA)\cong\Rep(\cA).
\]
\end{remark}

\bibliographystyle{amsalpha}
\bibliography{FromConfNetstoBicomCat}

\end{document}